\newtheorem{theorem}{Theorem}[section]
\newtheorem{lemma}[theorem]{Lemma}
\newtheorem{proposition}[theorem]{Proposition}
\newtheorem{corollary}[theorem]{Corollary}
\newtheorem{claim}[theorem]{Claim}
\theoremstyle{definition}
\newtheorem{definition}[theorem]{Definition}
\newtheorem{remark}[theorem]{Remark}
\newenvironment{proofclaim}{\paragraph{\emph{Proof of the Claim}.}}{\hfill$\qed$\\}
\newcommand{\func}[1]{\operatorname{#1}}
\def\bal{\boldsymbol{\mathit{ba}\ell}}
\def\ubal{\boldsymbol{\mathit{uba}\ell}}
\def\dbal{\boldsymbol{\mathit{dba}\ell}}
\def\BA{\sf{BA}}
\def\SL{\sf{SLat}_0^1}
\def\Frm{\sf{Frm}}
\def\KHaus{\sf{KHaus}}
\def\BE{\mathcal{B}}
\def\L{\mathcal{L}}
\def\B{\mathfrak{B}}
\def\Id{\func{Id}}
\def\Int{{\sf{int}}}
\def\Cl{{\sf{cl}}}
\def\filt{{\sf{Filt}}}
\def\Up{{\sf{Up}}}
\def\arch{{\sf{Arch}}}
\def\D{{\sf{Dn}}}
\def\RO{{\sf{RO}}}
\def\Clop{{\sf{Clop}}}
\newcommand{\el}[1]{[{#1}]}
\newcommand{\ar}[1]{\langle {#1}\rangle}
\def\up{{\uparrow}}
\def\down{{\downarrow}}
\providecommand{\bigsqcap}{%
  \mathop{%
    \mathpalette\@updown\bigsqcup
  }%
}
\newcommand*{\@updown}[2]{%
  \rotatebox[origin=c]{180}{$\m@th#1#2$}%
}
\title[A point-free approach to canonical extensions]{A point-free approach to canonical extensions of boolean algebras and bounded archimedean $\ell$-algebras}
\author{G. Bezhanishvili}
\address{New Mexico State University}
\email{guram@nmsu.edu}
\author{L. Carai}
\address{New Mexico State University}
\email{lcarai@nmsu.edu}
\author{P. Morandi}
\address{New Mexico State University}
\email{pmorandi@nmsu.edu}
\date{}
\subjclass[2010]{06F25; 13J25; 54C30; 06E15; 06D22; 06B23}
\keywords{Bounded archimedean $\ell$-algebra; Gelfand duality; boolean algebra; Stone duality; canonical extension; point-free topology}
\begin{document}

\begin{abstract}
In \cite{BH20} an elegant choice-free construction of a canonical extension of a boolean algebra $B$ was given as the boolean algebra of regular open subsets of the Alexandroff topology on the poset of proper filters of $B$. We make this construction point-free by replacing the Alexandroff space of proper filters of $B$ with the free frame $\L$ generated by the bounded meet-semilattice of all filters of $B$ (ordered by reverse inclusion) and prove that the booleanization of $\L$ is a canonical extension of $B$. Our main result generalizes this approach to the category $\bal$ of bounded archimedean $\ell$-algebras, thus yielding a point-free construction of canonical extensions in $\bal$. We conclude by showing that the algebra of normal functions on the Alexandroff space of proper archimedean $\ell$-ideals of $A$ is a canonical extension of $A\in\bal$, thus providing a generalization of the result of \cite{BH20} to $\bal$.
\end{abstract}

\maketitle

\tableofcontents

\section{Introduction} \label{sec: intro}

The theory of canonical extensions originates from the pioneering work of J\'onsson and Tarski \cite{JT51}. Originally it was defined for boolean algebras with operators, but was later generalized to distributive lattices with operators \cite{GJ94,GJ04}, lattices with operators \cite{GH01}, and even to posets with operators \cite{GP08,GJP13}. 

One of the most convenient (albeit neither choice-free nor point-free) ways to describe a canonical extension of a boolean algebra $B$ is using Stone duality. If $X$ is the Stone space of $B$, then $B$ is isomorphic to the boolean algebra $\Clop(X)$ of clopen subsets of $X$, and the pair $(\wp(X),e)$ is a canonical extension of $B$ where $\wp(X)$ is the powerset of $X$ and $e:\Clop(X)\to\wp(X)$ is the identity embedding.

This approach generalizes naturally from Stone spaces to compact Hausdorff spaces. Let $X$ be compact Hausdorff and $C(X)$ the ring of continuous (necessarily bounded) real-valued functions on $X$. In \cite{BMO18c} a canonical extension of $C(X)$ was described as the pair $(B(X),e)$ where $B(X)$ is the ring of all bounded real-valued functions on $X$ and $e:C(X)\to B(X)$ is the identity embedding. More generally, if $A$ is a bounded archimedean $\ell$-algebra, by Gelfand duality $A$ embeds into $C(X)$, where $X$ is the compact Hausdorff space of maximal $\ell$-ideals of $X$, and the pair $(B(X),\zeta)$ is a canonical extension of $A$, where $\zeta:A\to C(X)\subseteq B(X)$ is the embedding of $A$ into $C(X)$ (see Sections~\ref{sec: bal} and~\ref{sec: canonical extensions in bal point-free} for details). 

This approach to canonical extensions is neither choice-free nor point-free. An elegant choice-free approach to canonical extensions of boolean algebras was developed in \cite{BH20} where a canonical extension of a boolean algebra $B$ was constructed as the boolean algebra of regular open sets of the Alexandroff space of proper filters of $B$ (ordered by inclusion). Our first aim is to make the construction of \cite{BH20} point-free. For this we utilize the well-known fact that the free frame on a meet-semilattice $M$ with top is isomorphic to the downsets of $M$ (see, e.g., \cite[Prop.~IV.2.3]{PP12}).
For a boolean algebra $B$, let $\filt(B)$ be the co-frame of filters ordered by reverse inclusion. We view $\filt(B)$ as a bounded meet-semilattice, and show that the free frame $\L$ on $\filt(B)$ is isomorphic to the Alexandroff space of proper filters of $B$ ordered by inclusion (see Corollary~\ref{cor: Nick-Wes frame}). From this we derive that the booleanization $\B(\L)$ of $\L$ is a canonical extension of $B$ (see Theorem~\ref{thm:canonical BA}).

Our second aim is to generalize this point-free approach to the category $\bal$ of bounded archimedean $\ell$-algebras.  The interest in this category stems from the fact that $\bal$ provides an algebraic counterpart of the category $\KHaus$ of compact Hausdorff spaces. Indeed, by Gelfand duality, there is a dual adjunction between $\KHaus$ and $\bal$, which restricts to a dual equivalence between $\KHaus$ and the full subcategory $\ubal$ of $\bal$ consisting of uniformly complete algebras in $\bal$ (see, e.g., \cite{BMO13a}). Generalizing our point-free approach 
to $\bal$ requires additional machinery. Let $A\in\bal$. Following the work of Banaschewski \cite{Ban97,Ban05a}, we work with archimedean $\ell$-ideals of $A$ (see Section~\ref{sec: arch}). 
Let $\arch(A)$ be the frame of archimedean $\ell$-ideals ordered by inclusion. Assuming the Axiom of Choice (AC), $\arch(A)$ is isomorphic to the frame of opens of the space of maximal $\ell$-ideals of $A$ (see Remark~\ref{ref: arch iso to opens}). 

Viewing $\arch(A)$ as a bounded meet-semilattice, let $\L$ be the free frame generated by $\arch(A)$, and let $\BE(\L)$ be the free boolean extension of $\L$ (see, e.g., \cite[Sec.~V.4]{BD74}). We employ the concepts of Specker algebra and Dedekind completion (see Section~\ref{sec: bal} for details), which play an important role in the study of $\bal$. We associate with $\BE(\L)$ the Specker algebra $\mathbb{R}[\BE(\L)]$ and prove that the Dedekind completion $D(\mathbb{R}[\BE(\L)])$ of $\mathbb{R}[\BE(\L)]$ is a canonical extension of $A$ (see Theorem~\ref{thm: main}).  This is our main result and yields a point-free construction of canonical extensions in $\bal$. Its proof requires a number of technical calculations about archimedean $\ell$-ideals. In order to not break the flow, we move these calculations to an appendix.

Finally, we show that the algebra of normal real-valued functions on the Alexandroff space of proper archimedean $\ell$-ideals ordered by inclusion is a canonical extension of $A\in\bal$ (see Theorem~\ref{thm:normal}). On the one hand, this provides a generalization of the construction of \cite{BH20}. On the other hand, assuming (AC), this algebra of normal functions is isomorphic to the algebra of bounded real-valued functions on the set of maximal $\ell$-ideals of $A$, thus yielding the result of \cite{BMO18c}. 

\section{Canonical extensions of boolean algebras point-free} \label{sec: canonical extensions in BA}

In this section we show how to give a point-free description of canonical extensions of boolean algebras. Let $\BA$ be the category of boolean algebras and boolean homomorphisms. The next definition is well known (see, e.g., \cite[Sec.~2]{GH01}).

\begin{definition} 
Let $B$ be a boolean algebra, $C$ a complete boolean algebra, and $e : B \to C$ a $\BA$-monomorphism.
\begin{enumerate}
\item We call $e$ \emph{compact} if whenever $S, T \subseteq B$ with $\bigwedge e[S]  \le \bigvee e[T]$, there are finite $S_0 \subseteq S$ and $T_0 \subseteq T$ with $\bigwedge S_0 \le \bigvee T_0$.
\item We call $e$ \emph{dense} if each element of $C$ is a join of meets from $e[B]$.
\item We say that the pair $(C,e)$ is a \emph{canonical extension} of $B$ if $e$ is dense and compact.
\end{enumerate}
\end{definition}

\begin{remark} \label{rem: compactness for BA}
It is straightforward to see that the compactness condition is equivalent to each of the following two conditions.
\begin{enumerate}
\item If $T \subseteq B$ with $\bigvee e[T] = 1$, then there is a finite $T_0 \subseteq T$ with $\bigvee T_0 = 1$.
\item If $S \subseteq B$ with $\bigwedge e[S] = 0$, then there is a finite $S_0 \subseteq S$ with $\bigwedge S_0 = 0$.
\end{enumerate}
In fact, (1) is the original definition of compactness in \cite{JT51}. We will use (2) in the proof of Theorem~\ref{thm:canonical BA}.
\end{remark}

J\'onsson and Tarski \cite{JT51} utilized Stone duality to show that each boolean algebra has a canonical extension, which is unique up to isomorphism. This requires the use of (AC).
An elegant choice-free description of canonical extensions of boolean algebras was given in \cite{BH20}.  Let $B \in \BA$ and let $X$ be the set of proper filters of $B$, ordered by inclusion. View $X$ as an Alexandroff space where opens are the upsets of $X$ (so $U$ is open provided $x\in U$ and $x\le y$ imply $y\in U$). Let $\RO(X)$ be the boolean algebra of regular open subsets of $X$. Define $e : B \to \RO(X)$ by $e(b) = \{ x \in X \mid b \in x\}$. Then $(\RO(X),e)$ is a canonical extension of $B$ \cite[Thm.~8.27]{BH20}. 

We give a point-free description of the construction in \cite{BH20}. To do so we recall some basic notions from point-free topology. We refer the reader to \cite{PP12} for the details. 

A \emph{frame} or \emph{locale} is a complete distributive lattice $L$ satisfying the infinite distributive law $a\wedge\bigvee S = \bigvee\{ a\wedge s \mid s\in S\}$ for each $a\in L$ and $S\subseteq L$. A \emph{frame homomorphism} is a map $f:L\to M$ between two frames that preserves finite meets and arbitrary joins. For a frame $L$ and $a \in L$, let $a^*:=\bigvee\{s\in L \mid a\wedge s=0\}$ be the \emph{pseudocomplement} of $a$. The set $\B(L) := \{ a^{**} \mid a\in L \}$ 
is called the \emph{booleanization} of $L$. It is a complete boolean algebra, where infinite joins and meets are calculated by
\[
\bigsqcup S = \left(\bigvee S\right)^{**} \mbox{ and } \bigsqcap S = \bigwedge S.
\]
The \emph{well inside relation} $\prec$ is defined on a frame $L$ by $a \prec b$ if $a^* \vee b = 1$. Then $L$ is \emph{regular} if $a = \bigvee \{ s \in L \mid s \prec a\}$ for each $a \in L$. Also, $L$ is \emph{compact} if whenever $S \subseteq L$ with $\bigvee S = 1$, there is a finite $S_0 \subseteq S$ with $\bigvee S_0 = 1$. 

\begin{definition}
Let $B \in \BA$. We denote by $\filt(B)$ the set of filters of $B$, ordered by reverse inclusion.
\end{definition}

\begin{remark}
As we will see below, we are using reverse inclusion on $\filt(B)$ in order for the map $B \to \filt(B)$ which sends $b$ to $\up b$ to be order preserving. 
\end{remark}

Let $\sf{SLat}^1$ be the category of meet-semilattices with top with meet-semilattice morphisms preserving the top. The free frame on $M \in \sf{SLat}^1$ is isomorphic to the frame of downsets $\D(M)$ of $M$ (see, e.g., \cite[Prop.~IV.2.3]{PP12}), where we recall that $D$ is a downset if whenever $x \le y$ and $y \in D$, we have $x \in D$.  Let $\SL$ be the category of meet-semilattices with top and bottom, with meet-semilattice morphisms preserving both the top and bottom. 

\begin{proposition} \label{prop: free frame on M}
Let $M \in \SL$. Then $\D(M \setminus \{0\})$ is isomorphic to the free frame on $M$.
\end{proposition}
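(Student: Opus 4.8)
The plan is to verify directly that $\D(M\setminus\{0\})$, equipped with the unit map $\eta\colon M\to\D(M\setminus\{0\})$ defined by $\eta(m)=\down m\setminus\{0\}=\{x\in M\setminus\{0\}\mid x\le m\}$, satisfies the universal property of the free frame on $M$ in $\SL$. First I would check that $\eta$ is an $\SL$-morphism: it sends the top $1$ to $M\setminus\{0\}$ and the bottom $0$ to $\emptyset$ (since $0$ is the only element below $0$), and it preserves binary meets because $\down(a\wedge b)=\down a\cap\down b$ in any meet-semilattice, so that $\eta(a\wedge b)=\eta(a)\cap\eta(b)$. Conceptually, the point of passing from $\D(M)$ to $\D(M\setminus\{0\})$ is precisely to force $\eta(0)=\emptyset$; that is, $\D(M\setminus\{0\})$ is the frame quotient of the free $\sf{SLat}^1$-frame $\D(M)$ (see \cite[Prop.~IV.2.3]{PP12}) obtained by identifying $\down 0$ with the bottom, realized concretely by $D\mapsto D\setminus\{0\}$.

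Next, given any frame $L$ and any $\SL$-morphism $f\colon M\to L$ (so $f$ preserves finite meets, the top, and the bottom), I would define $\bar f\colon\D(M\setminus\{0\})\to L$ by
\[
\bar f(D)=\bigvee_{d\in D}f(d),
\]
and verify that $\bar f$ is a frame homomorphism with $\bar f\circ\eta=f$. Preservation of arbitrary joins is immediate, since joins in $\D(M\setminus\{0\})$ are unions; moreover $\bar f(\emptyset)=0$ and $\bar f(M\setminus\{0\})=1$, the latter because $1\in M\setminus\{0\}$ and $f(1)=1$. The identity $\bar f(\eta(m))=f(m)$ follows because $m$ is the greatest element of $\eta(m)$ when $m\ne 0$ (and both sides are $0$ when $m=0$).

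The step I expect to be the crux is preservation of binary meets. Using the frame distributive law together with the fact that $f$ preserves meets, one gets $\bar f(D_1)\wedge\bar f(D_2)=\bigvee\{f(d_1\wedge d_2)\mid d_1\in D_1,\ d_2\in D_2\}$; here a term with $d_1\wedge d_2\ne 0$ lies in $D_1\cap D_2$ (both being downsets), while a term with $d_1\wedge d_2=0$ contributes $f(0)=0$ and may be discarded, yielding $\bar f(D_1\cap D_2)$. This is exactly where the hypothesis that morphisms in $\SL$ preserve the bottom is essential: without $f(0)=0$ the degenerate terms would obstruct meet preservation. Finally, uniqueness of $\bar f$ is forced by join preservation together with the identity $D=\bigcup_{d\in D}\eta(d)$, which holds because $D$ is a downset of $M\setminus\{0\}$. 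Alternatively, the whole statement can be derived more conceptually by factoring the canonical map $\D(M)\to L$ coming from the $\sf{SLat}^1$ universal property through the quotient that forces $\down 0=0$ and identifying that quotient with $\D(M\setminus\{0\})$.
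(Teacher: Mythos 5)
Your proposal is correct and follows essentially the same route as the paper: the same unit map $m \mapsto \down m \setminus \{0\}$, the same extension $D \mapsto \bigvee\{f(d) \mid d \in D\}$, and the same verification of the universal property with uniqueness forced by $D = \bigcup_{d \in D} \eta(d)$. If anything, you are slightly more careful than the paper on the meet-preservation step, where you explicitly discard the terms with $d_1 \wedge d_2 = 0$ using $f(0)=0$ --- a point the paper's computation passes over silently.
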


\begin{proof}
Define $i : M \to \D(M \setminus \{0\})$ by $i(m) = \down m \setminus \{0\}$. It is straightforward to see that $i$ is a $\SL$-morphism. Let $L$ be a frame and $f : M \to L$ a $\SL$-morphism. Define $\varphi : \D(M \setminus \{0\}) \to L$ by $\varphi(D) = \bigvee \{ f(m) \mid m \in D\}$. We show that $\varphi$ is a frame homomorphism satisfying $\varphi \circ i = f$, and that $\varphi$ is uniquely determined by these properties.
\[
\begin{tikzcd}
M \arrow[r, "i"] \arrow[dr, "f"'] & D(M \setminus \{0\}) \arrow[d, "\varphi"] \\
& L
\end{tikzcd}
\]

First, $\varphi(\varnothing) = \bigvee \varnothing = 0$ and $\varphi(M \setminus \{0\}) = f(1) = 1$. Next, let $D_1, D_2$ be downsets of $M \setminus \{0\}$. We have
\begin{align*}
\varphi(D_1) \wedge \varphi(D_2) &= \bigvee \{ f(m) \mid m \in D_1 \} \wedge \bigvee \{ f(n) \mid n \in D_2 \} \\
&= \bigvee \{ f(m) \wedge f(n) \mid m \in D_1, n \in D_2\} \\
&= \bigvee \{ f(m \wedge n) \mid m \in D_1, n \in D_2\} \\
&= \bigvee \{ f(p) \mid p \in D_1 \cap D_2 \} = \varphi(D_1 \cap D_2).
\end{align*}
Also, let $\{D_\gamma \mid \gamma \in \Gamma\}$ be a family of downsets. Then
\begin{align*}
\varphi\left(\bigcup\left\{ D_\gamma \mid \gamma \in \Gamma \right\}\right) &= \bigvee \left\{ f(m) \mid m \in \bigcup D_\gamma \right\} \\
&= \bigvee \left\{ \bigvee \{ f(m) \mid m \in D_\gamma \} \mid \gamma \in \Gamma \right\} \\
&= \bigvee \{\varphi(D_\gamma) \mid \gamma \in \Gamma \}.
\end{align*}
Therefore, $\varphi$ is a frame homomorphism. It is clear from the definition that $\varphi(i(m)) = f(m)$ for each $m\in M$, so $\varphi \circ i = f$. Finally, since $D = \bigcup \{ i(m) \mid m \in D\}$, it follows that $\varphi$ is uniquely determined by the equation $\varphi \circ i = f$. Thus, $\D(M \setminus \{0\})$ is, up to isomorphism, the free frame on $M \in \SL$.
\end{proof}

\begin{remark} \label{rem: free frame}
Let $M \in \SL$ and let $(\L, i)$ be the free frame on $M$. Since $i$ preserves finite meets, every element of $\L$ is a join of elements from $i[M]$. 
\end{remark}

\begin{corollary} \label{cor: Nick-Wes frame}
Let $B \in \BA$. The frame of upsets of proper filters of $B$, ordered by reverse inclusion, is isomorphic the free frame on $\filt(B) \in \SL$.
\end{corollary}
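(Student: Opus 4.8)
The plan is to read the statement off directly from Proposition~\ref{prop: free frame on M} applied to $M = \filt(B)$, the only work being to translate the abstract description into the language of the Alexandroff space of proper filters. First I would locate the bottom element of $\filt(B)$. Since $\filt(B)$ is ordered by reverse inclusion, its least element is the largest filter under $\subseteq$, which is the improper filter $B$ itself; thus the semilattice bottom is $0 = B$, and $\filt(B) \setminus \{0\}$ is exactly the poset of proper filters of $B$ with the inherited reverse-inclusion order. Proposition~\ref{prop: free frame on M} then identifies the free frame on $\filt(B)$ with $\D(\filt(B) \setminus \{0\})$, the frame of downsets of the proper filters taken in the reverse-inclusion order.

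Next I would reconcile these downsets with the opens of the Alexandroff space. A family $D$ of proper filters is a downset for reverse inclusion precisely when, for $F \in D$ and a proper filter $G \supseteq F$, one has $G \in D$; but this is verbatim the condition that $D$ be an upset of the proper filters in the inclusion order, i.e.\ an open of the Alexandroff space of proper filters. Hence $\D(\filt(B) \setminus \{0\})$ and the frame of upsets of proper filters are the very same collection of subsets of the set of proper filters. Since in both frames arbitrary joins are unions and finite meets are intersections, this set-theoretic coincidence is already a frame isomorphism (indeed the identity on underlying sets), which is the assertion of the corollary.

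I do not expect a genuine obstacle here: the result is a direct corollary of Proposition~\ref{prop: free frame on M}. The only points requiring care are the identification of the semilattice bottom $0$ with the improper filter $B$ (so that $\filt(B) \setminus \{0\}$ becomes the proper filters) and the bookkeeping between the two opposite orders, namely reverse inclusion on $\filt(B)$ versus inclusion on the space of proper filters, under which downsets on the one side match upsets on the other.
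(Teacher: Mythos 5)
Your proposal is correct and follows exactly the paper's own route: apply Proposition~\ref{prop: free frame on M} to $M = \filt(B)$, note that the semilattice bottom is the improper filter $B$ so that $\filt(B)\setminus\{0\}$ is the set of proper filters, and identify downsets under reverse inclusion with upsets under inclusion, i.e.\ the Alexandroff opens. Your write-up is simply a more detailed account of the same two-step argument the paper gives.
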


\begin{proof}
By Proposition~\ref{prop: free frame on M}, the free frame on $\filt(B)$ is isomorphic to the frame of all downsets of $ \filt(B) \setminus \{B\}$, which is isomorphic to the frame of upsets of $X$.
\end{proof}

Let $B \in \BA$ and let $\L$ be the free frame on the bounded meet-semilattice $\filt(B)$ with the associated map $i : \filt(B) \to \L$. Define $e : B \to \L$ by $e(b) = i(\up b)$.  

\begin{lemma} \label{lem: i goes to B(L)}
If $b \in B$, then $i(\up b)^* = i(\up \lnot b)$. Consequently, $i(\up b) \in \B(\L)$.
\end{lemma}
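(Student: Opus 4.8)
The plan is to prove the pseudocomplement formula $i(\up b)^* = i(\up \lnot b)$ by establishing the two inequalities separately, and then to deduce $i(\up b) \in \B(\L)$ by applying that formula twice. Throughout I will use two facts recorded earlier: that $i : \filt(B) \to \L$ is a $\SL$-morphism, so it preserves finite meets, the top, and the bottom; and that, by Remark~\ref{rem: free frame}, every element of $\L$ is a join of elements of $i[\filt(B)]$.

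For the inequality $i(\up \lnot b) \le i(\up b)^*$, I would compute the meet $\up b \wedge \up \lnot b$ in the semilattice $\filt(B)$. Since the order on $\filt(B)$ is reverse inclusion, this meet is the filter generated by $\up b \cup \up \lnot b$; as it contains $b \wedge \lnot b = 0$, it is the improper filter, which is the bottom of $\filt(B)$. Because $i$ preserves finite meets and the bottom, $i(\up b) \wedge i(\up \lnot b) = i(\up b \wedge \up \lnot b) = 0$, and by the definition of the pseudocomplement this gives $i(\up \lnot b) \le i(\up b)^*$.

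The reverse inequality $i(\up b)^* \le i(\up \lnot b)$ is the main obstacle, since the pseudocomplement is a join over an infinite family. I would first reduce that family to basic elements: by Remark~\ref{rem: free frame} together with frame distributivity, an arbitrary $s \in \L$ with $s \wedge i(\up b) = 0$ is a join $\bigvee_j i(F_j)$ of basic elements, and then each summand satisfies $i(F_j) \wedge i(\up b) = 0$. Hence $i(\up b)^* = \bigvee \{ i(F) \mid F \in \filt(B) \text{ and } i(F) \wedge i(\up b) = 0 \}$. The key computation is to identify when $i(F) \wedge i(\up b) = i(F \wedge \up b) = 0$: this holds exactly when the filter generated by $F \cup \up b$ is improper, i.e.\ when some element of $F$ lies below $\lnot b$, which for a filter is equivalent to $\lnot b \in F$. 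For each such $F$ we then have $\up \lnot b \subseteq F$, so $F \le \up \lnot b$ in reverse inclusion, whence $i(F) \le i(\up \lnot b)$ by monotonicity of $i$. Taking the join yields $i(\up b)^* \le i(\up \lnot b)$, which completes the proof of the formula.

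Finally, the consequence $i(\up b) \in \B(\L)$ follows by recalling that $\B(\L) = \{ a^{**} \mid a \in \L \}$ consists exactly of the elements fixed by $a \mapsto a^{**}$. Applying the formula just proved, first to $b$ and then to $\lnot b$, gives $i(\up b)^{**} = i(\up \lnot b)^* = i(\up \lnot \lnot b) = i(\up b)$, since $\lnot \lnot b = b$ in $B$. Thus $i(\up b)$ equals its own double pseudocomplement and so lies in $\B(\L)$.
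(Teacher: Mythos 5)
Your proof is correct and takes essentially the same approach as the paper: you show $i(\up b) \wedge i(\up \lnot b) = 0$ using that $i$ preserves meets and the bottom of $\filt(B)$, reduce an arbitrary element disjoint from $i(\up b)$ to basic elements $i(F)$ via Remark~\ref{rem: free frame}, deduce $\lnot b \in F$ from improperness of the generated filter, and finish with the double-pseudocomplement computation $i(\up b)^{**} = i(\up \lnot\lnot b) = i(\up b)$. The only step you leave implicit (and the paper makes explicit) is why $i(F \wedge \up b) = 0$ forces the generated filter to be improper: this is because $i$ is one-to-one and sends the bottom of $\filt(B)$ to $0$.
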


\begin{proof}
Since $i$ is a $\SL$-morphism and the order on $\filt(B)$ is reverse inclusion, 
\[
i(\up b) \wedge i(\up \lnot b) = i(\up b \vee \up \lnot b) = i(\up (b \wedge \lnot b))  = i(B).
\] 
Because $B$ is the bottom of $\filt(B)$, we obtain that $i(\up b) \wedge i(\up \lnot b) = 0$. Let $x \in \L$ with $i(\up b) \wedge x = 0$. To show $x \le i(\up \lnot b)$, by Remark~\ref{rem: free frame},  $x$ is a join from $i[\filt(B)]$. Since the order on $\filt(B)$ is reverse inclusion, it suffices to show that if $F$ is a filter of $B$, then $i(\up b) \wedge i(F) = 0$ implies $\up \lnot b \subseteq F$. If $i(\up b) \wedge i(F) = 0$, then $i(\up b \vee F) = 0$, so $\up b \vee F = B$ as $i$ is one-to-one. Therefore, there is $a \in F$ with $a \wedge b = 0$. Thus, $a \le \lnot b$, and hence $\up \lnot b \subseteq F$, as desired. This shows that $i(\up b)^* = i(\up \lnot b)$. From this we see that $i(\up b)^{**} = i(\up \lnot b)^* = i (\up \lnot\lnot b) = i(\up b)$, so $i(\up b) \in \B(\L)$.
\end{proof}

\begin{theorem} \label{thm:canonical BA}
For $B\in\BA$, the pair $(\B(\L),e)$ is a canonical extension of $B$.
\end{theorem}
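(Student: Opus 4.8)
The plan is to verify the three defining properties in turn: that $e$ is a $\BA$-monomorphism, that it is dense, and that it is compact. Throughout I would work with the identification $\L\cong\D(\filt(B)\setminus\{B\})$ from Proposition~\ref{prop: free frame on M}, under which $e(b)=i(\up b)$ becomes the set of proper filters of $B$ containing $b$.

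First I would check that $e$ is a boolean homomorphism. Lemma~\ref{lem: i goes to B(L)} already places $e[B]$ inside $\B(\L)$. Since meets in $\B(\L)$ agree with meets in $\L$, and since the order on $\filt(B)$ is reverse inclusion—so that the semilattice meet of $\up b$ and $\up c$ is the filter join $\up(b\wedge c)$—the fact that $i$ is a $\SL$-morphism gives $e(b)\sqcap e(c)=i(\up b)\wedge i(\up c)=i(\up(b\wedge c))=e(b\wedge c)$. Preservation of $0$ and $1$ follows because $i$ sends the bottom $B$ and the top $\{1\}$ of $\filt(B)$ to $0$ and $1$. Lemma~\ref{lem: i goes to B(L)} identifies $e(\lnot b)=i(\up\lnot b)$ with the boolean complement $e(b)^*$ in $\B(\L)$, so $e$ preserves complements and hence joins. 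Injectivity is immediate, since both $i$ and $b\mapsto\up b$ are one-to-one.

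The heart of the density argument is the computation that, for every filter $F$, one has $\bigsqcap_{b\in F} e(b)=i(F)$: intersecting the sets of proper filters containing each $b\in F$ yields exactly the proper filters containing $F$, which is $i(F)$. Thus every $i(F)$ is a meet of elements of $e[B]$ and in particular lies in $\B(\L)$. By Remark~\ref{rem: free frame} every element of $\L$ is a join of elements $i(F)$, so if $a=\bigvee_F i(F)$ then $a^{**}=\bigsqcup_F i(F)$; hence every element $a^{**}$ of $\B(\L)$ is a join in $\B(\L)$ of meets from $e[B]$, which is density. For compactness I would invoke Remark~\ref{rem: compactness for BA}(2): if $\bigsqcap e[S]=0$, then the same computation shows $\bigsqcap e[S]=i(\ar{S})$, where $\ar{S}$ is the filter generated by $S$, and this is empty precisely when $\ar{S}=B$. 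In a boolean algebra this forces $b_1\wedge\cdots\wedge b_n=0$ for finitely many $b_i\in S$, so $S_0=\{b_1,\dots,b_n\}$ satisfies $\bigwedge S_0=0$.

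I expect the density step to require the most care. It hinges on correctly matching the join taken in $\L$ with the join $\bigsqcup$ taken in $\B(\L)$ through the double pseudocomplement, together with the (standard but essential) fact that $\B(\L)$ is closed under meets computed in $\L$; getting these two facts to interact correctly—so that each generator $i(F)$ is simultaneously a meet from $e[B]$ \emph{and} a member of $\B(\L)$—is the delicate point. The homomorphism and compactness steps are, by contrast, essentially semilattice bookkeeping once the reverse-inclusion conventions and the identification $\L\cong\D(\filt(B)\setminus\{B\})$ are in place.
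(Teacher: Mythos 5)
Your proposal is correct and follows essentially the same route as the paper: injectivity and the homomorphism property via Lemma~\ref{lem: i goes to B(L)}, density by showing $i(F)=\bigwedge\{e(b)\mid b\in F\}$ and using that every element of $\L$ is a join from $i[\filt(B)]$, and compactness via the filter generated by $S$ together with Remark~\ref{rem: compactness for BA}(2). The only (harmless) difference is presentational: you carry out the key computations concretely in the downset model $\D(\filt(B)\setminus\{B\})$ of Proposition~\ref{prop: free frame on M}, where meets are intersections, while the paper argues abstractly in the free frame; your handling of the interaction between $\bigvee$ in $\L$, $\bigsqcup$ in $\B(\L)$, and closure of $\B(\L)$ under meets matches the paper's conventions exactly.
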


\begin{proof}
By Lemma~\ref{lem: i goes to B(L)}, $e(b) \in \B(\L)$, so $e : B \to \B(\L)$ is well defined. If $b \ne c$, then $\up b \ne \up c$, so $i(\up b) \ne i(\up c)$. Therefore, $e$ is one-to-one. To see that $e$ is a $\BA$-morphism, let $b, c \in B$. Then
\[
e(b \wedge c) = i(\up (b \wedge c)) = i(\up b \vee \up c) 
= i(\up b) \wedge i(\up c) = e(b) \wedge e(c),
\]
so $e$ preserves meet. Also, by Lemma~\ref{lem: i goes to B(L)}, $e(\lnot b) = i(\up \lnot b) = i(\up b)^* = e(b)^*$. Thus, $e$ preserves negation, and hence is a $\BA$-morphism.

We next show that $e$ is dense. Since every element of $\B(\L)$ is a join from $i[\filt(B)]$, it is enough to show that if $F$ is a filter of $B$, then $i(F)$ is a meet from $e[B]$. We show that $i(F) = \bigwedge \{ e(b) \mid b \in F\}$.
First, if $b \in F$, then $\up b \subseteq F$, so 
$i(F) \le i(\up b) = e(b)$. Therefore, $i(F) \le \bigwedge \{ e(b) \mid b \in F\}$. For the reverse inequality, suppose that $G$ is a filter with $i(G) \le \bigwedge \{ e(b) \mid b \in F\}$. Then $i(G) \le i(\up b)$, so $\up b \subseteq G$, and hence $b \in G$ for each $b \in F$. This implies that $F \subseteq G$, and so $i(G)\le i(F)$. Therefore, if $x \in \L$ with $x \le \bigwedge \{ e(b) \mid b \in F\}$, then $x \le i(F)$, showing that $i(F) = \bigwedge \{ e(b) \mid b \in F\}$. Thus, $i(F)$ is a meet from $e[B]$, and so each element of $\L$ is a join of meets from $e[B]$. Consequently, $e$ is dense.

Finally, we show that $e$ is compact. 
Let $S \subseteq B$ with $\bigwedge e[S] = 0$. Then
\[
0 = \bigwedge e[S]  = \bigwedge \{ i(\up s)  \mid s \in S \}.
\]
Let $F$ be the filter generated by $S$. Then $\up s \subseteq F$, and hence $i(F) \le i(\up s)$ for each $s \in S$. This forces $i(F) = 0$, so 
$F = B$. Therefore, there are $s_1, \dots, s_n \in S$ with $s_1 \wedge \cdots \wedge s_n = 0$. Thus, $e$ is compact by Remark~\ref{rem: compactness for BA}, and hence $(\B(\L),e)$ is a canonical extension of $B$.
\end{proof}

We can now derive the result of \cite{BH20}. By Corollary~\ref{cor: Nick-Wes frame}, the Alexandroff topology $\Up(X)$ on $X$ is isomorphic to $\L$. Consequently, $\RO(X) \cong \B(\L)$. Moreover, define $f : \filt(B) \to \Up(X)$ by $f(F) 
= \{ G \in X \mid F \subseteq G\}$. It is easy to see that $f$ is a $\SL$-morphism, so induces a frame homomorphism $\varphi : \L \to \Up(X)$ satisfying $\varphi \circ i = f$.
\[
\begin{tikzcd}
\filt(B) \arrow[r, "i"] \arrow[dr, "f"'] & \L \arrow[d, "\varphi"] \\
& \Up(X)
\end{tikzcd}
\]

If $b \in B$, then
\[
\varphi(e(b)) = \varphi(i(\up b)) = f(\up b) = \{ G \in X \mid \up b \subseteq G\} = \{ G \in X \mid b \in G\},
\]
which is the map defined in \cite{BH20}. This yields an explicit isomorphism between our construction and that in \cite{BH20}.

\section{Bounded archimedean $\ell$-algebras} \label{sec: bal}

In this section we recall several basic facts about bounded archimedean $\ell$-algebras. We assume the reader's familiarity with $\ell$-rings (lattice-ordered rings) and $\ell$-algebras (lattice-ordered algebras). We use \cite[Ch.~XIII and onwards]{Bir79} as our main reference for $\ell$-rings and \cite{HJ61,BMO13a} as our main references for $\ell$-algebras. 
All rings are assumed to be commutative and unital (have multiplicative identity $1$).

\begin{definition}
\label{def: bal}
\begin{enumerate}
\item[]
\item An $\ell$-algebra $A$ is \emph{bounded} if for each $a \in A$ there is an integer $n \ge 1$ such that $a \le n\cdot 1$ (that is, $1$ is 
a \emph{strong order unit}). 
\item An $\ell$-algebra $A$ is \emph{archimedean} if for each $a, b \in A$, whenever $n\cdot a \le b$ for each $n \ge 1$, then $a \le 0$.
\item A \emph{$\bal$-algebra} is a bounded archimedean $\ell$-algebra and a $\bal$-morphism is a unital $\ell$-algebra homomorphism. Let $\bal$ be the category of $\bal$-algebras and $\bal$-morphisms.
\end{enumerate}
\end{definition}

Let $A\in\bal$. If $A\ne 0$, then we view $\mathbb{R}$ as an $\ell$-subalgebra of $A$ by identifying $r\in\mathbb R$ with $r\cdot 1\in A$.

\begin{definition}
Let $A\in\bal$ and $a\in A$.
\begin{enumerate}
\item Define the \emph{positive} and \emph{negative} parts of $a$ by
\[
a^+ = a \vee 0 \textrm{ and }  a^- = (-a) \vee 0 = -(a \wedge 0).
\]
\item Define
the \emph{absolute value} of $a$ by
\[
|a|=a\vee(-a).
\]
\item Define the \emph{norm} of $a \in A$ by
\[
||a||=\inf\{r\in\mathbb R : |a|\le r\}.
\]
\end{enumerate}
We call $A$ \emph{uniformly complete} if the norm is complete. Let $\ubal$ be the full subcategory of $\bal$ consisting of uniformly complete objects.
\end{definition}

\begin{theorem} [Gelfand duality \cite{GN43,Sto40}] \label{thm: GD}
There is a dual adjunction between $\bal$ and $\KHaus$ which restricts to a dual equivalence between $\KHaus$ and $\ubal$.
\[
\begin{tikzcd}
\ubal \arrow[rr, hookrightarrow] && \bal \arrow[dl, "(-)_*"]  \arrow[ll, bend right = 20] \\
&  \KHaus \arrow[ul,  "(-)^*"] &
\end{tikzcd}
\]
\end{theorem}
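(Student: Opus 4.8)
The plan is to exhibit the two contravariant functors explicitly, produce a natural isomorphism of hom-sets witnessing the dual adjunction, and then identify the objects on which the unit and counit are isomorphisms to obtain the dual equivalence. First I would define $(-)_* : \bal \to \KHaus$. For $A \in \bal$, let $A_*$ be the set of maximal $\ell$-ideals of $A$ with the hull-kernel topology. The crucial point (using boundedness together with the archimedean hypothesis) is that for each maximal $\ell$-ideal $M$ the quotient $A/M$ is isomorphic to $\mathbb{R}$, so that maximal $\ell$-ideals correspond bijectively to unital $\ell$-algebra homomorphisms $A \to \mathbb{R}$ (characters). Under this identification the hull-kernel topology agrees with the weak topology, and since the character space sits as a closed subspace of the product $\prod_{a \in A}[-\|a\|,\|a\|]$, Tychonoff's theorem shows $A_*$ is compact Hausdorff. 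On morphisms, a $\bal$-morphism $\phi : A \to B$ induces the continuous contraction $\phi_* : B_* \to A_*$, $M \mapsto \phi^{-1}(M)$. In the other direction $(-)^* : \KHaus \to \bal$ sends $X$ to $C(X)$ (bounded since $X$ is compact, and archimedean and uniformly complete) and a continuous $g : X \to Y$ to precomposition $g^* : C(Y) \to C(X)$.

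Next I would establish the natural bijection
\[
\bal(A, C(X)) \cong \KHaus(X, A_*),
\]
natural in both variables. Given $g : X \to A_*$, send it to the morphism $a \mapsto (x \mapsto \chi_{g(x)}(a))$, where $\chi_M$ is the character attached to $M$; conversely, given $\phi : A \to C(X)$, send it to $x \mapsto \phi^{-1}(M_x)$, where $M_x = \{f \mid f(x)=0\}$ is the maximal $\ell$-ideal of functions vanishing at $x$. Once one knows $A/M \cong \mathbb{R}$, checking that these assignments are mutually inverse and natural is routine, and this yields the dual adjunction.

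To upgrade the adjunction to a dual equivalence, I would analyze the unit $\eta_A : A \to C(A_*)$, the Gelfand transform $\eta_A(a)(M) = a + M \in A/M \cong \mathbb{R}$, and the counit $\varepsilon_X : X \to C(X)_*$, $\varepsilon_X(x) = M_x$. Injectivity of $\eta_A$ is exactly the archimedean condition (the intersection of all maximal $\ell$-ideals is $0$), and its image is an $\ell$-subalgebra of $C(A_*)$ that separates points and contains the constants. A Stone--Weierstrass argument then shows this image is uniformly dense, so $\eta_A$ is an isomorphism precisely when $A$ is uniformly complete, i.e. $A \in \ubal$. Separately, $\varepsilon_X$ is always a homeomorphism: Hausdorffness together with Urysohn's lemma give injectivity and that it is an embedding, while compactness gives surjectivity. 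Since $\varepsilon_X$ is always an isomorphism and $\eta_A$ is an isomorphism exactly on uniformly complete algebras, the adjunction restricts to a dual equivalence between $\KHaus$ and $\ubal$.

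The main obstacle is the homeomorphism $\varepsilon_X : X \to C(X)_*$, and in particular its surjectivity: to see that every maximal $\ell$-ideal $M$ of $C(X)$ is of the form $M_x$, note that if $M \ne M_x$ for all $x$ then for each $x$ there is $f_x \in M$ with $f_x(x) \ne 0$; the sets $\{f_x^2 > 0\}$ cover $X$, and a finite subcover $x_1,\dots,x_n$ gives $f_{x_1}^2 + \cdots + f_{x_n}^2 \in M$ strictly positive on compact $X$, hence invertible, contradicting that $M$ is proper. This is where compactness is genuinely used and where the argument is inherently non-constructive, relying on Zorn's lemma (existence of maximal ideals), Tychonoff, and the finite-subcover step. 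The Stone--Weierstrass density step for the unit is the second substantive point; everything else is bookkeeping in verifying functoriality, naturality, and the triangle identities.
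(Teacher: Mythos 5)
Your proposal is correct and follows essentially the same route the paper takes: the paper cites this as classical Gelfand--Naimark--Stone duality and sketches exactly your functors --- the Yosida space $A_* = Y_A$ of maximal $\ell$-ideals with hull-kernel topology (using $A/M \cong \mathbb{R}$), the functor $C(-)$, the counit $\varepsilon_X(x) = \{f \mid f(x) = 0\}$ as a homeomorphism, and the unit $\zeta_A$ as a point-separating monomorphism whose density is supplied by Stone--Weierstrass, so that it is an isomorphism precisely on $\ubal$. Your additional details (compactness of the character space via Tychonoff, surjectivity of $\varepsilon_X$ via the finite-cover/invertibility argument) are the standard fillings-in of the argument the paper invokes by reference.
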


The contravariant functors $(-)^*:\KHaus \to \bal$ and $(-)_*:\bal\to\KHaus$ establishing the dual adjunction of Theorem~\ref{thm: GD} are defined as follows. For a compact Hausdorff space $X$ let $X^*$ be the ring $C(X)$ of (necessarily bounded) continuous real-valued functions on $X$. For a continuous map $\varphi:X\to Y$ let $\varphi^*:C(Y)\to C(X)$ be defined by $\varphi^*(f)=f\circ\varphi$ for each $f\in C(Y)$. Then $(-)^*:\KHaus\to\bal$ is a well-defined contravariant functor.

For $A\in\bal$, we recall that an ideal $I$ of $A$ is an \emph{$\ell$-ideal} if $|a|\le|b|$ and $b\in I$ imply $a\in I$, and that $\ell$-ideals are exactly the kernels of $\ell$-algebra homomorphisms. Let $Y_A$ be the space of maximal $\ell$-ideals of $A$, whose closed sets are exactly sets of the form
\[
Z_\ell(I) = \{M\in Y_A\mid I\subseteq M\},
\]
where $I$ is an $\ell$-ideal of $A$. The space $Y_A$ is often referred to as the \emph{Yosida space} of $A$, and it is well known that $Y_A\in\KHaus$. Let $A_*=Y_A$ and for a $\bal$-morphism $\alpha$ let $\alpha_*=\alpha^{-1}$. Then $(-)_*:\bal\to\KHaus$ is a well-defined contravariant functor, and the functors $(-)_*$ and $(-)^*$ yield a dual adjunction between $\bal$ and $\KHaus$.

Moreover, for $X\in\KHaus$ we have that $\varepsilon_X: X\to Y_{C(X)}$
is a homeomorphism where
\[
\varepsilon_X(x)=\{f\in C(X) \mid f(x)=0\}.
\]
Furthermore, for $A\in\bal$ define $\zeta_A :A\to C(Y_A)$ by $\zeta_A(a)(M)=r$ where $r$ is the unique real number satisfying $a+M=r+M$. Then $\zeta_A$ is a monomorphism in $\bal$ separating points of $Y_A$. Thus, by the Stone-Weierstrass theorem, we have:

\begin{theorem}\label{prop: SW}
\begin{enumerate}
\item[]
\item The uniform completion of $A \in \bal$ is $\zeta_A : A \to C(Y_A)$. Therefore, if $A$ is uniformly complete, then $\zeta_A$ is an isomorphism.
\item $\ubal$ is a reflective subcategory of $\bal$, and the reflector $\zeta : \bal \to \ubal$ assigns to each $A \in \bal$ its uniform completion $C(Y_A) \in \ubal$.
\end{enumerate}
\end{theorem}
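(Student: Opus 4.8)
The plan is to exhibit $C(Y_A)$ as the metric completion of $A$ in its norm, so that both sentences of part (1) follow at once. First I would check that $\zeta_A$ is an isometric embedding, i.e. $\|\zeta_A(a)\| = \|a\|$, where the norm on $C(Y_A)$ is the sup norm $\|f\| = \sup\{|f(M)| : M \in Y_A\}$. Since $\zeta_A$ preserves the order, the absolute value, and the constants $r\cdot 1$, and since the archimedean property yields $|a| \le \|a\|\cdot 1$, applying $\zeta_A$ gives $\|\zeta_A(a)\| \le \|a\|$; conversely, because a $\bal$-monomorphism is injective it reflects the order, so $\zeta_A(|a|) \le t\cdot 1$ forces $|a| \le t\cdot 1$ and hence $\|a\| \le \|\zeta_A(a)\|$. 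Next, $C(Y_A)$ is uniformly complete because $Y_A\in\KHaus$. Finally, as $\zeta_A$ separates the points of $Y_A$ and its image contains the constant functions, the Stone-Weierstrass theorem shows that $\zeta_A[A]$ is uniformly dense in $C(Y_A)$. Together these say precisely that $\zeta_A : A \to C(Y_A)$ is the uniform completion of $A$. The second sentence is then immediate: if $A$ is uniformly complete, $\zeta_A[A]$ is a complete, hence closed, dense subspace of $C(Y_A)$, so $\zeta_A[A] = C(Y_A)$ and $\zeta_A$ is an isomorphism.

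For part (2) I would verify the universal property making $\zeta$ a left adjoint to the inclusion $\ubal \hookrightarrow \bal$, with unit $\zeta_A$. The key preliminary is that every $\bal$-morphism $\alpha : A \to B$ is norm non-increasing: if $|a| \le r\cdot 1$ then $|\alpha(a)| = \alpha(|a|) \le r\cdot 1$, so $\|\alpha(a)\| \le \|a\|$, whence $\alpha$ is uniformly continuous. Now, given $A\in\bal$ and a $\bal$-morphism $g : A \to U$ with $U\in\ubal$, the uniform continuity of $g$ together with the density of $\zeta_A[A]$ in $C(Y_A)$ (from part (1)) and the completeness of $U$ produces a unique uniformly continuous map $\overline{g} : C(Y_A) \to U$ with $\overline{g} \circ \zeta_A = g$. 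One then checks, using continuity of the ring and lattice operations and the density of $\zeta_A[A]$, that $\overline{g}$ preserves the operations and the unit, i.e. is a $\bal$-morphism. This exhibits $\zeta_A$ as the reflection unit and shows that $\ubal$ is a reflective subcategory of $\bal$ with reflector $\zeta$ as described.

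The essential work is delegated to the Stone-Weierstrass theorem, which supplies exactly the density needed to turn the isometric embedding $\zeta_A$ into a genuine completion; given that input, everything else is a standard completion-and-universal-property argument. Accordingly, I do not expect a deep obstacle so much as two points demanding care: the identification of the intrinsic norm of $A$ with the sup norm under $\zeta_A$, which rests on order-reflection by injectivity plus the archimedean property, and the upgrade of the merely continuous extension $\overline{g}$ in part (2) to a full $\bal$-morphism, which rests on continuity of the algebraic and lattice operations together with density of $\zeta_A[A]$.
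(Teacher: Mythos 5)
Your proposal is correct and follows essentially the same route as the paper, which states this theorem without a detailed proof as a direct consequence of the Stone--Weierstrass theorem applied to the point-separating monomorphism $\zeta_A : A \to C(Y_A)$. Your elaboration—isometry of $\zeta_A$ via order reflection and the archimedean property, completeness of $C(Y_A)$, Stone--Weierstrass density, and the extension of contractive $\bal$-morphisms to verify the reflection's universal property—is exactly the standard argument the paper is invoking.
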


Consequently, the dual adjunction restricts to a dual equivalence between $\ubal$ and $\KHaus$, yielding Gelfand duality.

For the results in Section~\ref{sec: canonical extensions in bal point-free} we recall the definition of Dedekind algebras, Dedekind completions, and Specker algebras (see, e.g., \cite{BMO13a, BMO13b}). 

\begin{definition} \label{dc def}
Let $A \in\bal$.
\begin{enumerate}
\item $A$ is \emph{Dedekind complete} if every subset of $A$ bounded above has a least upper bound (and hence every subset of $A$ bounded below has a greatest lower bound).
\item $A$ is a \emph{Dedekind $\bal$-algebra} if $A$ is Dedekind complete.
\item Let $\dbal$ be the full subcategory of $\bal$ consisting of Dedekind $\bal$-algebras.
\end{enumerate}
\end{definition}

If $A \in \bal$, then there is a unique up to isomorphism Dedekind $\bal$-algebra $D(A) \in \bal$ such that $A$ embeds into $D(A)$ and the image is join dense in $D(A)$. This result was proved by Nakano \cite[Sec.~31]{Nak50} in the setting of vector lattices and by Johnson \cite[p.~493]{Joh65} in the setting of $f$-rings. It was adapted to $\bal$ in \cite{BMO13b}.

\begin{definition}
For $A\in\bal$, we call $D(A)$ the \emph{Dedekind completion} of $A$. Throughout this paper we will identify $A$ with its image in $D(A)$.
\end{definition}

The following theorem provides a characterization of Dedekind completions in $\bal$ (see, e.g., \cite[Thm.~3.1]{BMO13b}).

\begin{theorem} \label{thm: Ded} 
Let $A \in \bal$ and $D \in \dbal$. If $\alpha : A \to D$ is a $\bal$-monomorphism such that every element of $D$ is a join from $\alpha[A]$, then there is a $\bal$-isomorphism $\varphi : D(A) \to D$ with $\varphi|_A = \alpha$.
\end{theorem}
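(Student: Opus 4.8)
The plan is to construct the isomorphism $\varphi$ directly from join-density. Although the existence and uniqueness of $D(A)$ are already available, that uniqueness is only up to abstract isomorphism, so to obtain an isomorphism restricting to the prescribed embedding $\alpha$ I would build $\varphi$ by hand. Recall that $A$ embeds join-densely into $D(A)$, so each $d \in D(A)$ satisfies $d = \bigvee\{a \in A \mid a \le d\}$. I would define $\varphi : D(A) \to D$ by
\[
\varphi(d) = \bigvee\{\alpha(a) \mid a \in A,\ a \le d\},
\]
the join being taken in $D$. This join exists because $1$ is a strong order unit: if $d \le \|d\|\cdot 1$ in $D(A)$, then every $\alpha(a)$ in the indexing set lies below $\alpha(\|d\|\cdot 1) = \|d\|\cdot 1$ in $D$, so the set is bounded above and $D \in \dbal$ is Dedekind complete. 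Since $\alpha$ is order preserving, $\alpha(a_0)$ is the greatest element of $\{\alpha(a) \mid a \le a_0\}$ for $a_0 \in A$, whence $\varphi|_A = \alpha$; and $\varphi$ is clearly order preserving.

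Next I would verify that $\varphi$ is a $\bal$-morphism. A preliminary observation, used repeatedly, is that an injective $\ell$-algebra homomorphism reflects order: if $\alpha(a) \le \alpha(b)$, then $\alpha((b-a)^-) = (\alpha(b)-\alpha(a))^- = 0$, so $(b-a)^- = 0$ and $a \le b$. The second ingredient is that in a Dedekind complete bounded archimedean $\ell$-algebra the ring and lattice operations are continuous with respect to suprema of bounded sets; for instance $c + \bigvee S = \bigvee(c+S)$, $c \vee \bigvee S = \bigvee(c \vee S)$, and $c \cdot \bigvee S = \bigvee(cS)$ for $c \ge 0$. Combining these with join-density, preservation of $+$, $\vee$, $\wedge$, and scalar multiplication is routine. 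I expect the \textbf{main obstacle} to be multiplicativity. Here I would first reduce to positive elements via $d = d^+ - d^-$, and then show, for $d_1, d_2 \ge 0$, that $d_1 d_2 = \bigvee\{ab \mid 0\le a\le d_1,\ 0\le b\le d_2,\ a,b \in A\}$ using continuity of multiplication, matching this against $\varphi(d_1)\varphi(d_2) = \bigvee\{\alpha(a)\alpha(b)\}$. The delicate point is the inequality $\varphi(d_1 d_2) \le \varphi(d_1)\varphi(d_2)$, which amounts to showing that the products $ab$ approximate $d_1 d_2$ from below cofinally enough that every $c \in A$ with $c \le d_1 d_2$ satisfies $\alpha(c) \le \bigvee\{\alpha(a)\alpha(b)\}$; this approximation step is the technical heart and is where auxiliary estimates on Dedekind completions are needed.

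Finally I would establish bijectivity. For injectivity it suffices, once $\varphi$ is known to be a morphism, to show that $d \ge 0$ and $\varphi(d) = 0$ force $d = 0$: indeed $\varphi(|d_1 - d_2|) = |\varphi(d_1)-\varphi(d_2)|$, so $\varphi(d_1)=\varphi(d_2)$ gives $\varphi(|d_1-d_2|)=0$. If $d \ge 0$ and $\varphi(d)=0$, then every $a \in A$ with $0 \le a \le d$ has $0 \le \alpha(a) \le \varphi(d) = 0$, so $\alpha(a)=0$ and thus $a = 0$ by injectivity; since $d = \bigvee\{a \in A \mid 0 \le a \le d\}$ by join-density, we get $d = 0$. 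For surjectivity I would use that $\alpha[A]$ is join-dense in $D$: given $d' \in D$, the set $S = \{a \in A \mid \alpha(a) \le d'\}$ is bounded above in $D(A)$, since order-reflection turns $\alpha(a) \le d' \le \|d'\|\cdot 1$ into $a \le \|d'\|\cdot 1$, so $d := \bigvee S$ exists in $D(A)$, and a short argument using join-density of $\alpha[A]$ shows $\varphi(d) = d'$. Together with $\varphi|_A = \alpha$, this exhibits $\varphi$ as the desired $\bal$-isomorphism.
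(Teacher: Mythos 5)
The paper itself does not prove Theorem~\ref{thm: Ded}; it cites \cite[Thm.~3.1]{BMO13b}, so your proposal has to be judged against what a complete argument requires. Your skeleton is the natural one: the definition $\varphi(d) = \bigvee_D\{\alpha(a) \mid a \in A,\ a \le d\}$, the well-definedness via the strong order unit, the order-reflection observation, $\varphi|_A = \alpha$, and the (conditional) injectivity argument are all fine. The genuine gap is that you never prove the one statement on which every remaining step rests, namely the \emph{join-transfer lemma}: for $S \subseteq A$ bounded above, $a \le \bigvee_{D(A)} S$ implies $\alpha(a) \le \bigvee_D \alpha[S]$. Your two declared ingredients---order reflection, and continuity of the operations over suprema \emph{inside a single} Dedekind complete algebra---do not yield it. They let you rewrite $d_1 + d_2 = \bigvee_{D(A)}\{a_1 + a_2 \mid a_i \le d_i\}$ and $\varphi(d_1) + \varphi(d_2) = \bigvee_D\{\alpha(a_1 + a_2) \mid a_i \le d_i\}$, but comparing these two joins, one formed in $D(A)$ and one formed in $D$, is exactly the transfer problem, which neither ingredient touches. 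So even additivity is not ``routine'' as you claim; your surjectivity step (``a short argument \ldots shows $\varphi(d) = d'$'') is this same transfer; and for multiplication you explicitly leave the step open (``auxiliary estimates \ldots are needed''). That admitted hole sits at the heart of the proof, and it recurs, unacknowledged, in the steps you labeled routine.

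The missing idea is \emph{meet-density}. Since $x \mapsto -x$ is an order-reversing bijection, join-density of $\alpha[A]$ in $D$ gives meet-density: every $u \in D$ equals $\bigwedge\{\alpha(b) \mid b \in A,\ u \le \alpha(b)\}$. The transfer lemma then follows: if $u = \bigvee_D \alpha[S] \le \alpha(b)$, then $s \le b$ for every $s \in S$ by order reflection, so $b$ is an upper bound of $S$ in $D(A)$, hence $\bigvee_{D(A)} S \le b$, hence $a \le b$ and $\alpha(a) \le \alpha(b)$; taking the meet over all such $b$ gives $\alpha(a) \le u$. Once this lemma is in hand, your worry about multiplication evaporates: for $0 \le d_1, d_2$ one has $d_i = \bigvee_{D(A)}\{a \in A \mid 0 \le a \le d_i\}$, and Remark~\ref{rem: l-ring properties}(\ref{l-ring: multiplication}) applied in $D(A)$ and in $D$ gives $d_1 d_2 = \bigvee_{D(A)}\{a_1 a_2\}$ and $\varphi(d_1)\varphi(d_2) = \bigvee_D\{\alpha(a_1 a_2)\}$, which the transfer lemma identifies with $\varphi(d_1 d_2)$; the general case follows from $d = d^+ - d^-$ and additivity, as you say. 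In short, multiplicativity needs no ``auxiliary estimates'' and is no harder than addition; the single uniform difficulty is the unproven transfer of joins over $A$ from $D(A)$ to $D$, and supplying it (via meet-density plus order reflection) is what turns your outline into a proof.
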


Let $A \in \bal$. Since $A$ is a commutative ring, it is well known that the set $\Id(A)$ of idempotents of $A$ is a boolean algebra under the operations
\[
e \vee f = e + f - ef, \quad e \wedge f = ef, \quad \lnot e = 1-e.
\]
The ordering on the boolean algebra $\Id(A)$ is the restriction of the ordering on $A$, and hence $e \in \Id(A)$ implies that $0 \le e \le 1$.

\begin{definition}\label{def of Specker} \cite[Sec.~5]{BMO13a}
We call $A\in\bal$ a \emph{Specker $\bal$-algebra} if $A$ is generated by $\Id(A)$.
\end{definition}

\begin{remark}
For the history of the notion of a Specker algebra see, e.g., \cite{BMO20e}. 
\end{remark}

Specker algebras can be characterized by the following construction, the origins of which go back to Bergman \cite{Ber72} and Rota \cite{Rot73}.

\begin{definition}\label{def: R[B]} \cite[Def.~2.4]{BMMO15a}
Let $B$ be a Boolean algebra. We denote by $\mathbb{R}[B]$ the quotient ring $\mathbb{R}[\{x_e \mid e\in B\}]/I_B$ of the polynomial ring over $\mathbb{R}$ in variables indexed by the elements of $B$ modulo the ideal $I_B$ generated by the following elements, as $e, f$ range over $B$: $$x_{e\wedge f} - x_e x_f, \ \ x_{e\vee f} - (x_e + x_f - x_e x_f), \ \ x_{\lnot e} - (1-x_e), \ \ x_0.$$
\end{definition}

For $e\in B$ we abuse notation and identify $x_e$ with its image in $\mathbb{R}[B]$. Considering the generators of $I_B$, for all $e,f\in B$, we have that
\[
x_{e\wedge f}=x_e x_f, \ \ x_{e\vee f}= x_e + x_f - x_e x_f, \ \ x_{\lnot e}=1-x_e, \ \ x_0=0.
\]

\begin{theorem} \label{thm: UMP}
Let $A\in\bal$. 
\begin{enumerate} 
\item \label{UMP: 1} \cite[Lem.~3.2(4)]{BMMO15a} The map sending $e \in B$ to $x_e \in \mathbb{R}[B]$ is a $\BA$-isomorphism from $B$ to $\Id(\mathbb{R}[B])$.
\item \label{UMP: 2}\cite[Lem.~2.5]{BMMO15a} For $B \in \BA$ and a $\BA$-morphism $\tau : B \to \Id(A)$, there is a unique $\bal$-morphism $\sigma : \mathbb{R}[B] \to A$ such that $\sigma(x_b) = \tau(b)$ for each $b \in B$.
\item \label{UMP: 3}\cite[Thm.~2.7]{BMMO15a}
$A$ is a Specker $\bal$-algebra iff $A \cong \mathbb{R}[B]$ for some $B \in \BA$.
\end{enumerate}
\end{theorem}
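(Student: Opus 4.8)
The plan is to prove the three parts in tandem, using a single functional representation of $\mathbb{R}[B]$ as the engine. Let $X$ be the Stone space of $B$, so that $B \cong \Clop(X)$ via $e \mapsto U_e$, and let $S(X)$ denote the $\ell$-algebra of continuous finite-valued functions $X \to \mathbb{R}$ (equivalently, finite $\mathbb{R}$-linear combinations of characteristic functions $\chi_U$ of clopen sets $U$); this is readily seen to be a bounded archimedean $\ell$-algebra. First I would record the multiplicative relations holding in $\mathbb{R}[B]$: from $x_{e\wedge f}=x_e x_f$ and $x_0=0$ one gets $x_e^2=x_e$ and $x_e x_f=0$ whenever $e\wedge f=0$. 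Using these together with $x_{e\vee f}=x_e+x_f-x_e x_f$, a routine refinement argument (passing to the finite boolean subalgebra generated by the indices occurring) shows that every element of $\mathbb{R}[B]$ admits an orthogonal normal form $\sum_{i=1}^n r_i x_{e_i}$ with $r_i\in\mathbb{R}$ and the $e_i\in B$ pairwise disjoint.

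Next I would define, for each point $p\in X$ (i.e.\ each ultrafilter of $B$), the ring homomorphism $\mathbb{R}[B]\to\mathbb{R}$ determined by $x_e\mapsto 1$ if $p\in U_e$ and $x_e\mapsto 0$ otherwise; checking that the four generators of $I_B$ are sent to $0$ shows this is well defined. Assembling these over all $p$ yields a homomorphism $\Phi:\mathbb{R}[B]\to S(X)$ sending $x_e$ to $\chi_{U_e}$. Surjectivity of $\Phi$ is immediate since the $\chi_{U_e}$ generate $S(X)$. For injectivity, if $\Phi$ kills an element written in orthogonal normal form, then evaluating at a point of each nonempty $U_{e_i}$ forces each $r_i=0$, so the element already vanishes in $\mathbb{R}[B]$; hence $\Phi$ is an isomorphism. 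Since $\Id(S(X))$ consists exactly of the $\{0,1\}$-valued functions, namely the $\chi_U$ for $U$ clopen, the idempotents of $\mathbb{R}[B]$ are precisely the $x_e$, and $e\mapsto x_e$ is a $\BA$-isomorphism onto $\Id(\mathbb{R}[B])$; this proves \eqref{UMP: 1}.

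For \eqref{UMP: 2} I would invoke the universal property of the polynomial ring to obtain a unique unital $\mathbb{R}$-algebra homomorphism $\mathbb{R}[\{x_e\}]\to A$ with $x_e\mapsto\tau(e)$, and then verify that each generator of $I_B$ lies in its kernel—this is exactly the assertion that $\tau$ preserves the boolean operations, which holds by hypothesis. This produces a unique ring homomorphism $\sigma:\mathbb{R}[B]\to A$ with $\sigma(x_e)=\tau(e)$. The one genuinely nontrivial point is that $\sigma$ is an $\ell$-algebra morphism and not merely a ring map: here I would again use the orthogonal normal form, writing $\sigma(\sum r_i x_{e_i})=\sum r_i\tau(e_i)$ with the $\tau(e_i)$ pairwise orthogonal idempotents of $A$, so that the lattice operations are computed componentwise on both sides and $\sigma$ is seen to preserve $\vee$ and $\wedge$.

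Finally, \eqref{UMP: 3} follows by combining the previous parts: if $A\cong\mathbb{R}[B]$ then the normal form exhibits $A$ as generated by the idempotents $x_e$, so $A$ is Specker; conversely, if $A$ is Specker, set $B=\Id(A)$ and apply \eqref{UMP: 2} to the identity $\tau:\Id(A)\to\Id(A)$ to get $\sigma:\mathbb{R}[B]\to A$, which is surjective because the idempotents generate $A$ and injective by the normal form (an element of the kernel has $\sum r_i e_i=0$ with the $e_i$ orthogonal, forcing each $r_i=0$). I expect the main obstacle to be precisely the injectivity of $\Phi$ together with the well-definedness of the orthogonal normal form: one must show the defining relations of $I_B$ cause no unexpected collapse, which is exactly what the separation of points of $X$—equivalently, the existence of enough ultrafilters of $B$—provides. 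The attendant care needed to pin down the $\ell$-structure on $\mathbb{R}[B]$, so that ring maps respecting idempotents are automatically order preserving, is the other delicate ingredient.
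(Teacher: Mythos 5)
The paper does not actually prove Theorem~\ref{thm: UMP}: all three items are imported from \cite{BMMO15a}, so the comparison is between your argument and the cited algebraic proofs. Your route is genuinely different. You realize $\mathbb{R}[B]$ concretely as the algebra $S(X)$ of finite-valued continuous functions on the Stone space $X$ of $B$ and read all three parts off that model, whereas the cited proofs never leave the algebra: the key point, faithfulness (an orthogonal relation $\sum_i r_i x_{e_i}=0$ with the $e_i$ pairwise disjoint and nonzero forces all $r_i=0$), is obtained by noting that such a relation, together with the finite combination of generators of $I_B$ witnessing it, lives in $\mathbb{R}[B_0]$ for a finite subalgebra $B_0$ of $B$; a finite Boolean algebra has atoms with no choice principle, and the universal property of part (2) applied to $B_0 \cong \Id(\mathbb{R}^m)$ identifies $\mathbb{R}[B_0]$ with $\mathbb{R}^m$, which kills the relation. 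Your model is more vivid and, modulo standard facts, does prove the statements in ZFC; but it costs choice, and that cost is not innocent here.

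Concretely, your injectivity step needs $U_{e_i} \neq \varnothing$ whenever $e_i \neq 0$, i.e., an ultrafilter containing $e_i$ --- the Boolean Prime Ideal Theorem; even the injectivity of $e \mapsto x_e$ is extracted from Stone duality in your argument. But Theorem~\ref{thm: UMP} feeds directly into Theorem~\ref{thm: main}, which the paper advertises as point-free, and into Theorem~\ref{thm:normal}, whose proof the paper explicitly declares to be choice-free; routing its proof through ultrafilters would vitiate both claims. Moreover, your closing assertion that injectivity of $\Phi$ is ``exactly what'' the existence of enough ultrafilters of $B$ provides is wrong as a claim of necessity: as sketched above, faithfulness has a purely finitary, choice-free proof.

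A second, smaller soft spot is in part (2): you assert that for pairwise orthogonal idempotents $\tau(e_i)$ in an arbitrary $A \in \bal$, the lattice operations on sums $\sum_i r_i \tau(e_i)$ are computed coefficientwise. This is true but not formal. It requires, for instance, that $re \wedge sf = 0$ in $A$ (meet, not merely product, equal to zero) for orthogonal idempotents $e,f$ and reals $r,s \ge 0$, which ultimately rests on $\bal$-algebras being reduced $f$-rings. These are exactly the facts the paper has to assemble separately (Remark~\ref{rem: idempotent facts}, Remark~\ref{rem: l-ring properties}(\ref{l-ring: disjoint}), Lemma~\ref{rem: re meet sf}(\ref{re meet sf: 1})), and they carry much of the substance of the cited \cite[Lem.~2.5]{BMMO15a}; in a proof from scratch they cannot simply be waved through.
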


\begin{remark}
Associating $\Id(A)$ with each $A \in \bal$ extends to a covariant functor $\Id : \bal \to \BA$. Definition~\ref{def: R[B]} gives rise to a covariant functor $\func{Sp} : \BA \to \bal$ which is left adjoint to $\Id$. The functors $\Id$ and $\func{Sp}$ yield an equivalence between $\BA$ and the full subcategory of $\bal$ consisting of Specker $\bal$-algebras (see \cite[Sec.~3]{BMMO15a} for details).
\end{remark}

We will use the following two facts about $\Id(A)$. The proof of the first one can be found in \cite[Lem.~4.9(6)]{BMMO15b}, and we give a short proof of the second.

\begin{remark} \label{rem: idempotent facts}
Let $A \in \bal$.
\begin{enumerate}
\item \label{idempotent facts: 1}Let $0 \ne e,f \in \Id(A)$ and $0 < r,s \in \mathbb{R}$. If $re \le sf$, then $r \le s$ and $e \le f$.
\item \label{idempotent facts: 2}If $e \in \Id(A)$ and $0 \le a \in A$ with $1 = e \vee a$, then $\lnot e \le a$. Indeed, since $\lnot e$ is an idempotent, $0 \le \lnot e \le 1$, so 
\[
\lnot e = \lnot e \wedge (e \vee a) = (\lnot e \wedge e) \vee (\lnot e \wedge a) = 0 \vee (\lnot e \wedge a) = \lnot e \wedge a
\]
because $0 \le a$.
Thus, $\lnot e \le a$. 
\end{enumerate}
\end{remark}

In the next remark we collect together some elementary facts about $\mathbb{R}[B]$. We will use them frequently in our proofs.

\begin{remark} \label{rem: Specker facts}
Let $e, f \in B$.
\begin{enumerate}
\item \label{Specker facts: 1} $x_e \vee x_f = x_{e \vee f}$ and $x_e \wedge x_f = x_{e \wedge f}$. 
\item \label{Specker facts: 2} If $e \wedge f = 0$, then $x_e + x_f = x_e \vee x_f$.
\item \label{Specker facts: 3} $x_e \vee x_{\lnot e} = 1$ and $x_e \wedge x_{\lnot e} = 0$.
\item \label{Specker facts: 4} $x_e = x_{e \wedge \lnot f} + x_{e \wedge f}$.
\end{enumerate}

In addition, if $a \in \mathbb{R}[B]$, we may write $a = r_1 x_{b_1} + \cdots + r_n x_{b_n}$ for some $r_i \in \mathbb{R}$ and $b_i \in B$ with $b_i \wedge b_j = 0$ whenever $i \ne j$ (see, e.g., \cite[Lem.~5.4]{BMO13a}). 
\end{remark}

We conclude this section by the following remark in which we collect together some well-known identities that hold in $\bal$-algebras. They will be used throughout the paper. Most can be found in \cite[Ch.~XIII, XV, XVII]{Bir79}, for (\ref{l-ring: vee wedge a}), (\ref{l-ring: inequality}), and (\ref{l-ring: join times scalar}) see \cite[Secs.~12, 13]{LZ71}, and for (\ref{l-ring: multiplication}) see \cite[Lem.~1]{Joh65}.

\begin{remark} \label{rem: l-ring properties}
Let $A \in \bal$, $a, b \in A$, and $C, D \subseteq A$. 
\begin{enumerate}
\item \label{l-ring: vee + a} If $\bigvee C$ exists in $A$, then $\bigvee \{ a + c \mid c \in C\}$ exists and is equal to $a + \bigvee C$. The dual property for meets also holds. 
\item \label{l-ring: vee wedge a} $A$ is a distributive lattice. 
Furthermore, if $\bigvee C$ exists in $A$, then $\bigvee \{ a \wedge c \mid c \in C\}$ exists and is equal to $a \wedge \bigvee C$. The dual property for meets also holds.  
\item \label{l-ring: negation and join} $-(a \vee b) = (-a) \wedge (-b)$ and $-(a \wedge b) = (-a) \vee (-b)$. 
\item \label{l-ring: -a plus} $a^- = (-a)^+$. 
\item \label{l-ring: positive plus negative} $a = a^+ - a^-$ and $|a| = a^+ + a^-$.
\item \label{l-ring: inequality} $(a + b)^+ \le a^+ + b^+$ and $(a+b)^- \le a^- + b^-$.
\item \label{l-ring: plus meet minus} $a^+ \wedge a^- = 0 = a^+ a^-$. 
\item \label{l-ring: join times scalar} If $\bigvee C$ exists in $A$ and $0 \le r \in \mathbb{R}$, then $\bigvee \{ rc \mid c \in C\}$ exists and is equal to $r\bigvee C$. The dual property for meets also holds. 
\item \label{l-ring: disjoint}If $a \wedge b = 0$, then $ra \wedge sb = 0$ for any $0 \le r, s \in \mathbb{R}$. 
\item \label{l-ring: multiplication} If $C, D$ consist of nonnegative elements and the joins $\bigvee C, \bigvee D$ exist in $A$, then $\left(\bigvee C\right) \left(\bigvee D\right) = \bigvee \{ cd \mid c \in C, d \in D \}$. The dual property for meets also holds.
\item \label{l-ring: preservation of abs value} If $\alpha$ is a $\bal$-morphism, then $\alpha(|a|) = |\alpha(a)|$. \end{enumerate}
\end{remark}

\section{Archimedean $\ell$-ideals} \label{sec: arch}

In this section we discuss archimedean $\ell$-ideals in $\bal$-algebras. This notion will be our main tool in the rest of the paper. 
Let $A\in\bal$ and $I$ be an $\ell$-ideal of $A$. It is well known and easy to check that $A/I$ is a bounded $\ell$-algebra, but $A/I$ may not be archimedean in general.

\begin{definition}
 Let $A \in \bal$. We call an $\ell$-ideal $I$ of $A$ \emph{archimedean} if $A/I$ is archimedean.  Let $\arch(A)$ be the set of archimedean $\ell$-ideals of $A$, ordered by inclusion.
\end{definition}

\begin{remark} \label{rem: properties of arch}
Let $A\in\bal$.
\begin{enumerate}
\item \label{properties of arch: 1} If $I$ is an $\ell$-ideal of $A$, then $A/I$ is archimedean iff $A/I\in\bal$. Thus, $I$ is archimedean iff $A/I \in \bal$. 
\item \label{properties of arch: 2} If $M$ is a maximal $\ell$-ideal of $A$, then it is well known (see, e.g., \cite[Cor.~27]{HJ61}) that $A/M \cong \mathbb{R}$. Thus, every maximal $\ell$-ideal is archimedean. 
\item \label{properties of arch: 3} Assuming (AC), an $\ell$-ideal $I$ of $A \in \bal$ is archimedean iff $I = \bigcap \{ M \in Y_A \mid I \subseteq M\}$ (see, e.g., \cite[p.~440]{BMO13a}).
\end{enumerate}
\end{remark}

\begin{remark}
In \cite{Ban97} Banaschewski studied the $\ell$-ideals in $f$-rings that are closed in the norm topology. If $A$ is a $\bal$-algebra, then it turns out that an $\ell$-ideal $I$ of $A$ is archimedean iff it is closed in the norm topology. We skip the proof since this fact does not play an important role in the rest of the paper. 
\end{remark}

The next result is a consequence of Banaschewski's more general result. 

\begin{theorem} \cite[App.~2]{Ban97}
\label{thm: arch frame}
If $A\in\bal$, then $\arch(A)$ is a compact regular frame.
\end{theorem}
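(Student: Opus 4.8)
The plan is to realize $\arch(A)$ as a complete lattice, identify its joins through an archimedean closure operator, and then verify the three defining properties (frame distributivity, compactness, regularity) by means of an explicit description of that closure in terms of the truncations $(|a|-\tfrac1n\cdot 1)^+$. First I would check that $\arch(A)$ is a complete lattice in which meets are intersections. The crucial point is that the archimedean property passes to sub-$\ell$-algebras and is preserved under products: for a family $\{I_\gamma\}$ of archimedean $\ell$-ideals, $A/\bigcap_\gamma I_\gamma$ embeds as a sub-$\ell$-algebra of $\prod_\gamma A/I_\gamma$, which is archimedean since the defining condition is checked coordinatewise. Hence $\bigcap_\gamma I_\gamma\in\arch(A)$ by Remark~\ref{rem: properties of arch}. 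As $A$ is the top element, $\arch(A)$ is a complete lattice, and there is a closure operator $J\mapsto\overline J$ assigning to each $\ell$-ideal $J$ the least archimedean $\ell$-ideal containing it, with $\bigvee_\gamma I_\gamma=\overline{\sum_\gamma I_\gamma}$ (the closure of the $\ell$-ideal sum).

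The technical heart is the explicit formula
\[
\overline J=\Big\{\,a\in A \ \Big|\ (|a|-\tfrac1n\cdot 1)^+\in J \ \text{ for all } n\ge 1\,\Big\}.
\]
I would show the right-hand side is an $\ell$-ideal using the pointwise-verifiable inequalities $(x+y-\tfrac1n)^+\le(x-\tfrac1{2n})^+ +(y-\tfrac1{2n})^+$ and $(|ra|-\tfrac1n)^+\le N(|a|-\tfrac1{nN})^+$ whenever $|r|\le N$, that it is archimedean, and that it is contained in every archimedean $\ell$-ideal $K\supseteq J$: if $a$ lies in the set, then in $A/K$ we get $|\bar a|\le\tfrac1n$ for all $n$, forcing $\bar a=0$. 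Together these give that the set equals $\overline J$.

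With the formula in hand, the infinite distributive law $I\wedge\bigvee_\gamma J_\gamma=\bigvee_\gamma(I\wedge J_\gamma)$ reduces, since the order is inclusion and one inclusion is formal, to $I\cap\overline{\sum_\gamma J_\gamma}\subseteq\overline{\sum_\gamma(I\cap J_\gamma)}$, which I would establish by chasing the truncations of an element through the formula. Compactness is then immediate: if $\bigvee_\gamma I_\gamma=A$, then $1\in\overline{\sum_\gamma I_\gamma}$, so $(1-\tfrac12)^+=\tfrac12\in\sum_\gamma I_\gamma$ by the formula, whence $1\in\sum_\gamma I_\gamma$; writing $1$ as a finite sum selects finitely many $I_\gamma$ whose join already contains $1$ and so equals $A$.

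I expect regularity to be the main obstacle, and here I would use truncations to manufacture the well-inside approximants. Given $0\le a\in I$ and $n\ge 1$, put $a_n=(a-\tfrac1n)^+$ and $b=(\tfrac1{2n}-a)^+$. Then $a_n\wedge b=0$ (their supports are disjoint), which gives $\overline{(b)}\le\overline{(a_n)}^*$, while the pointwise inequality $2n\,b+2n\,a\ge 1$ shows $1\in\overline{(b)}+I$, so $\overline{(a_n)}^*\vee I=A$, that is $\overline{(a_n)}\prec I$. Since $\|a-a_n\|\le\tfrac1n$, we have $a\in\bigvee_n\overline{(a_n)}$, so every element of $I$ lies in $\bigvee\{J\in\arch(A)\mid J\prec I\}$; as the reverse inclusion is formal, $I$ is regular. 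The delicate points are precisely the verification that the closure formula yields an archimedean $\ell$-ideal and the transfer of disjointness of elements to disjointness of the $\ell$-ideals they generate, namely $a_n\wedge b=0\Rightarrow\overline{(a_n)}\wedge\overline{(b)}=0$; these are exactly the kinds of computations about archimedean $\ell$-ideals that the authors defer to the appendix.
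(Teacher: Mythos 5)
Your proposal is essentially correct, but it is worth saying up front that it necessarily diverges from the paper, because the paper offers no proof of this theorem at all: it is quoted from Banaschewski \cite[App.~2]{Ban97} as a consequence of his more general results on $\ell$-ideals of $f$-rings. So what you have produced is a self-contained proof inside the paper's own toolkit. Your closure formula is Proposition~\ref{prop: archimedean hull} in disguise: $(n|x|-1)^+ = n\left(|x|-\tfrac{1}{n}\right)^+$ and $\ell$-ideals absorb the factor $n$, so the two descriptions of the hull coincide (and since that proposition is proved independently of the present theorem, there is no circularity). Your compactness argument is right, and the regularity argument checks out in detail: $a_n \wedge b = 0$ because $a_n \le u^+$ and $b = u^-$ for $u = a - \tfrac{1}{2n}$, and $u^+ \wedge u^- = 0$ by Remark~\ref{rem: l-ring properties}(\ref{l-ring: plus meet minus}); disjointness passes to the hulls because $x \in \ar{\el{a_n}} \cap \ar{\el{b}}$ gives $(|x|-\tfrac{1}{m})^+ \in \el{a_n} \cap \el{b} = 0$ for all $m$, whence $x = 0$ as $A$ is archimedean; and $a \in \bigvee_n \ar{\el{a_n}}$ follows at once from your formula, since $(a-\tfrac{1}{m})^+ = a_m$ lies in the sum of the $\el{a_n}$ (no appeal to norm-closedness of archimedean $\ell$-ideals is needed, which is good, since the paper states but does not prove that fact). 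The one place where ``chasing the truncations'' hides a genuine ingredient is the frame law: to see $I \cap \ar{\sum_\gamma J_\gamma} \subseteq \ar{\sum_\gamma (I \cap J_\gamma)}$, you write $(|a|-\tfrac{1}{n})^+ = b_1 + \cdots + b_m$ with $b_k \in J_{\gamma_k}$, and must then invoke the Riesz decomposition property of $\ell$-groups to replace the $b_k$ by $c_k$ with $0 \le c_k \le |b_k|$ and $\sum_k c_k = (|a|-\tfrac{1}{n})^+$, so that $c_k \le |a|$ puts $c_k \in I \cap J_{\gamma_k}$ by solidity; the same property is what makes a sum of $\ell$-ideals an $\ell$-ideal, which you also use (e.g.\ to get $1 \in \ar{\el{b}} + I$). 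This is standard but should be named, since it carries the whole distributivity step. As for the trade-off: the paper's citation imports the theorem in greater generality at no cost in space, whereas your argument keeps the treatment self-contained and choice-free and uses exactly the truncation technique $(a-\tfrac{1}{n})^+$ that the paper's appendix relies on, so it fits the paper's program well.
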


\begin{remark} \label{ref: arch iso to opens}
Since $\arch(A)$ is compact regular, using (AC) it follows from \cite{Isb72} that $\arch(A)$ is isomorphic to the frame of open subsets of a compact Hausdorff space (see also \cite{BM80} or \cite[Sec.~III.1]{Joh82}). In fact, $\arch(A)$ is isomorphic to the frame of open subsets of the Yosida space $Y_A$ via the map that sends $I \in \arch(A)$ to the open set $Z_\ell(I)^c$. This is an analogue of the well-known fact that for a commutative ring $R$, there is a bijection between the frame of radical ideals of $R$ and the frame of open subsets of the prime spectrum of $R$ with the Zariski topology.
\end{remark}

It is straightforward to see that the intersection of a family of archimedean $\ell$-ideals is archimedean. Thus, we can define the concept of the archimedean hull, which Banaschewski \cite{Ban05a} referred to as the archimedean kernel.

\begin{definition}
Let $A\in\bal$. The \emph{archimedean hull} of $S\subseteq A$ is the intersection of all archimedean $\ell$-ideals of $A$ containing $S$. We denote the archimedean hull of $S$ by $\ar{S}$.
\end{definition}

Banaschewski \cite[p.~321]{Ban05a} showed that if $A$ is an $f$-ring and $I$ is an $\ell$-ideal of $A$, then the archimedean hull of $I$ is constructed as follows. Let
\[
k(I) = \{ a \in A \mid (n|a|-b)^+ \in I \textrm{ for some } b \ge 0 \textrm{ and for all }n \ge 1\}.
\]
It is straightforward to see that $k(I)$ is an $\ell$-ideal containing $I$. Set $k_1(I) = k(I)$. For each ordinal $\gamma$ set $k_{\gamma + 1}(I) = k(k_{\gamma}(I))$. If $\gamma$ is a limit ordinal, define $k_{\gamma}(I) = \bigcup \{ k_{\delta}(I) \mid \delta < \gamma\}$. Then there is a least $\gamma$ with $k_{\gamma}(I) = k_{\gamma + 1}(I)$, and the archimedean hull $\ar{I}$ of $I$ is $k_{\gamma}(I)$. 

We conclude this section by showing that when $A \in \bal$ (and hence 1 is a strong order-unit), the construction of the archimedean hull simplifies. For this we need the following remark.
 
\begin{remark}
\label{rem: positive part}
\begin{enumerate}
\item[]
\item \label{positive part: 1} If $A$ is a bounded $\ell$-algebra, then it is archimedean iff for each $a\in A$, whenever $na \le 1$ for each $n \ge 1$, then $a \le 0$. To see that this condition implies that $A$ is archimedean, suppose $na \le b$ for each $a,b\in A$ and $n\ge 1$. Since $A$ is bounded, there is $m \ge 1$ with $b \le m$. Therefore, $(nm)a \le b \le m$, so $na \le 1$ for all $n \ge 1$. This forces $a \le 0$, and hence $A$ is archimedean. 
\item \label{positive part: 2} Let $A\in\bal$ and $I$ be an $\ell$-ideal of $A$. For $a \in A$, we have: 
\[
a + I \ge 0 + I \mbox{ iff } a^- \in I \mbox{ and } a + I \le 0 + I \mbox{ iff } a^+ \in I \mbox{ (see, e.g., \cite[Rem.~2.11]{BMO16}).}
\]
\end{enumerate}
\end{remark}
 
\begin{proposition} \label{prop: archimedean hull}
Let $A\in\bal$ and $I$ be an $\ell$-ideal of $A$. Then
\[
\ar{I} = \{ x \in A \mid (n|x|-1)^+ \in I \textrm{ for all } n \ge 1 \}.
\] 
\end{proposition}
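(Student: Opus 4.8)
The plan is to set $J := \{ x \in A \mid (n|x|-1)^+ \in I \text{ for all } n \ge 1\}$ (the right-hand side of the asserted equality) and to show $J = \ar{I}$ by proving $J \subseteq \ar{I}$ and $\ar{I} \subseteq J$ separately, both routed through Banaschewski's hull operator $k$.

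First I would observe that boundedness collapses $k$ to the simpler formula defining $J$; that is, $k(I) = J$. The inclusion $J \subseteq k(I)$ is immediate on taking $b = 1$ in the definition of $k(I)$. For the converse, let $a \in k(I)$ and choose $b \ge 0$ with $(n|a| - b)^+ \in I$ for all $n$. Since $1$ is a strong order unit, $b \le p$ for some integer $p \ge 1$, so $0 \le (n|a| - p)^+ \le (n|a| - b)^+ \in I$ forces $(n|a| - p)^+ \in I$ for all $n$. Taking $n = pq$ and using $(pq|a| - p)^+ = p\,(q|a|-1)^+$ gives $p\,(q|a|-1)^+ \in I$; as $p \ge 1$ and $I$ is an $\ell$-ideal, $(q|a|-1)^+ \in I$ for every $q$, i.e.\ $a \in J$. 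Thus $J = k(I)$ is an $\ell$-ideal containing $I$, and since the chain $k_1(I) \subseteq k_2(I) \subseteq \cdots$ is increasing with $\ar{I} = k_\gamma(I)$, we have $J = k_1(I) \subseteq \ar{I}$.

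The reverse inclusion reduces to showing that $J$ is archimedean: then $J$ is an archimedean $\ell$-ideal containing $I$, whence the least such ideal $\ar{I}$ satisfies $\ar{I} \subseteq J$. This is the crux. As $J$ is an $\ell$-ideal, $A/J \in \bal$, so by Remark~\ref{rem: positive part}(\ref{positive part: 1}) it is archimedean iff $(na-1)^+ \in J$ for all $n$ implies $a^+ \in J$, where I have rewritten $n(a+J) \le 1 + J$ and $a + J \le 0 + J$ using Remark~\ref{rem: positive part}(\ref{positive part: 2}). Unwinding $J$, the hypothesis says $(m(na-1)^+ - 1)^+ \in I$ for all $m,n$, while the conclusion says $(m a^+ - 1)^+ \in I$ for all $m$. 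These are bridged by the single inequality
\[
(m a^+ - 1)^+ \le \big(m\,((m+1)a - 1)^+ - 1\big)^+ \qquad (m \ge 1):
\]
its right-hand side is the instance of the hypothesis at $(m, m+1)$ and hence lies in $I$, and since both sides are nonnegative and $I$ is downward closed, $(m a^+ - 1)^+ \in I$, giving $a^+ \in J$.

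The main obstacle is establishing this inequality. I would prove it pointwise: every $A \in \bal$ embeds into $\mathbb{R}^{Y_A}$ through the Yosida map $\zeta_A : A \to C(Y_A)$, and both sides are built from $a$ using only scalar multiplication, the constants $0,1$, and $(-)^+$, which $\zeta_A$ preserves and reflects in order; so it suffices to verify the inequality for each real value of $a$, which splits into the elementary cases $a \le 0$, $0 < a \le 1/m$, and $a > 1/m$ (in the last, both positive parts are attained and the difference of the two inner expressions equals $m(ma-1) > 0$). If a choice-free argument is preferred, the same inequality can be deduced directly from the identities of Remark~\ref{rem: l-ring properties}. Combining $J \subseteq \ar{I}$ with $\ar{I} \subseteq J$ yields $\ar{I} = J$.
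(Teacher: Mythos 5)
Your proof is correct, and its crux coincides with the paper's: the archimedeanness of the right-hand side $J$ comes down to specializing the doubly-indexed family $(m(na-1)^+-1)^+ \in I$ at $n = m+1$. Indeed, your bridging inequality, once simplified, \emph{is} the paper's computation: using Remark~\ref{rem: l-ring properties}(\ref{l-ring: vee + a},\ref{l-ring: join times scalar}) one has $(x^+ - 1)^+ = ((x-1)\vee(-1))\vee 0 = (x-1)^+$, hence $(ma^+-1)^+ = (ma-1)^+$ and $(m((m+1)a-1)^+ -1)^+ = (m(m+1)a-(m+1))^+ = (m+1)(ma-1)^+$, so your inequality reduces to $(ma-1)^+ \le (m+1)(ma-1)^+$, which is clear since $(ma-1)^+ \ge 0$; this also makes good on your unproved assertion that a purely equational verification exists. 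Where you genuinely differ is in the surrounding architecture. For the inclusion $J \subseteq \ar{I}$ you prove $J = k(I)$ and invoke Banaschewski's transfinite iteration $\ar{I} = k_\gamma(I)$ from \cite{Ban05a}; the paper never uses that iteration, instead proving minimality directly from the definition of $\ar{I}$ as an intersection: if $J' \supseteq I$ is archimedean and $a \in J$, then $(n|a|-1)^+ \in I \subseteq J'$ gives $n|a| + J' \le 1 + J'$ for all $n$, so $|a| \in J'$ since $A/J'$ is archimedean. The paper's route is self-contained and shorter; yours leans on the cited external result but buys the extra information that Banaschewski's iteration stabilizes after a single step when $1$ is a strong order unit.

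Two caveats. First, your primary verification of the key inequality via the Yosida embedding $\zeta_A : A \to C(Y_A)$ silently imports (AC), since injectivity of $\zeta_A$ requires enough maximal $\ell$-ideals; this clashes with the choice-free role this proposition plays in the paper, so the equational argument above should be the main one rather than the fallback. Second, a slip of wording: ``As $J$ is an $\ell$-ideal, $A/J \in \bal$'' asserts exactly what you are in the middle of proving; you mean that $A/J$ is a bounded $\ell$-algebra, which is all that Remark~\ref{rem: positive part}(\ref{positive part: 1}) requires.
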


\begin{proof}
Set $K = \{ x \in A \mid (n|x|-1)^+ \in I \textrm{ for all } n \ge 1 \}$. It is straightforward to see that $K$ is an $\ell$-ideal of $A$ containing $I$. To see that $A/K$ is archimedean, by Remark~\ref{rem: positive part}(\ref{positive part: 1}), it is enough to show that if $a \in A$ with $n(a+K) \le 1 + K$ for each $n \ge 1$, then $a + K \le 0 + K$. Since $na + K \le 1 + K$ we get $na^+ +K \le 1 + K$. By Remark~\ref{rem: positive part}(\ref{positive part: 2}) this implies that $(na^+-1)^+ \in K$ for each $n \ge 1$. To show $a + K \le 0 + K$ it is sufficient to show that $a^+ \in K$. Thus, we may replace $a$ by $a^+$ to assume $0 \le a$. Since $(na-1)^+ \in K$ for each $n \ge 1$, we have $(m(na-1)^+-1)^+ \in I$ for each $n, m \ge 1$. Using several properties from Remark~\ref{rem: l-ring properties}, we have
\begin{align*}
(m(na-1)^+ - 1)^+ &= (((mna - m) \vee 0) - 1) \vee 0 \\
&= ((mna - (m+1)) \vee -1) \vee 0 \\
&= (mna - (m+1)) \vee 0 \\
&= (mna - (m+1))^+,
\end{align*}
and so $(mna - (m+1))^+ \in I$ for each $m, n \ge 1$. Set $n = m + 1$. Then $(m+1)(ma-1)^+ \in I$, so $(ma-1)^+ \in I$ because $m+1$ is a unit in $A$. Since this is true for each $m \ge 1$, we get $a \in K$, so $a + K = 0 + K$. Therefore, $A/K$ is archimedean, and hence $K$ is an archimedean $\ell$-ideal of $A$. 

Suppose $J \supseteq I$ is an archimedean $\ell$-ideal. If $a \in K$, then $(n|a|-1)^+ \in J$ for each $n \ge 1$, so $(n|a| - 1) + J \le 0 + J$, and hence $n|a| + J \le 1 + J$ for each $n \ge 1$. Since $A/J$ is archimedean, $|a| + J = 0 + J$, so $|a| \in J$, and hence $a \in J$. Therefore, $K \subseteq J$, and thus $K$ is the least archimedean $\ell$-ideal containing $I$, completing the proof.
\end{proof}

\section{Canonical extensions in \texorpdfstring{$\bal$}{bal} point-free} \label{sec: canonical extensions in bal point-free}

Canonical extensions of $\bal$-algebras were introduced in \cite{BMO18c}, where it was shown that a canonical extension of $A\in\bal$ is isomorphic to $(B(Y_A),\zeta_A)$. The proof that $(B(Y_A),\zeta_A)$ is a canonical extension of $A\in\bal$ is neither choice-free nor point-free. However, the uniqueness part of the proof is point-free. 
In this section we give a point-free proof of the existence as well, generalizing our results from Section~\ref{sec: canonical extensions in BA}. The arguments are considerably more complicated than those of Section~\ref{sec: canonical extensions in BA} and require a careful study of various properties of archimedean $\ell$-ideals. To make the section easier to read we collect all these properties together in an appendix.

\begin{definition}\cite[Def.~1.6]{BMO18c}
Let $A$ be a $\bal$-algebra, $D$ a Dedekind $\bal$-algebra and $\zeta : A \to D$ a $\bal$-monomorphism.
\begin{enumerate}
\item We call $\zeta$ \emph{compact} if whenever $S, T \subseteq A$ and $\varepsilon > 0$ with $\bigwedge \zeta[S] + \varepsilon \le \bigvee \zeta[T]$, there are finite $S_0 \subseteq S$ and $T_0 \subseteq T$ with $\bigwedge S_0 \le \bigvee T_0$.
\item We call $\zeta$ \emph{dense} if each element of $D$ is a join of meets from $\zeta[A]$.
\item We say that the pair $(D,\zeta)$ is a \emph{canonical extension} of $A$ if $\zeta$ is dense and compact.
\end{enumerate}
\end{definition}

\begin{remark} \label{rem: compact}
\begin{enumerate}
\item[]
\item In \cite{BMO18c} canonical extensions were defined for the category of bounded archimedean vector lattices, but the same definition works for $\bal$.
\item \label{compact: join} By \cite[Lem.~2.4]{BMO18c}, $\zeta : A \to D$ is compact iff for each $T \subseteq A$ and $\varepsilon > 0$, if $\varepsilon \le \bigvee \zeta[T]$, then there is a finite $T_0 \subseteq T$ with $0 \le \bigvee T_0$.
\end{enumerate}
\end{remark}

Let $A\in\bal$. By \cite[Thm.~1.8(2)]{BMO18c}, $(B(Y_A),\zeta_A)$ is a canonical extension of $A$. 
To motivate our new approach, we give a point-free description of $B(Y_A)$. Let ${\sf Dist}$ be the category of bounded distributive lattices. It is well known that the forgetful functor ${\mathcal U}:\BA\to{\sf Dist}$ has a left adjoint $\BE:{\sf Dist}\to\BA$. For each $L\in{\sf Dist}$, the Boolean algebra $\BE(L)$ together with the canonical embedding $i:L\to\BE(L)$ is usually referred to as the \emph{free Boolean extension} of $L$. It is characterized by the following universal mapping property: If $\lambda : L \to C$ is a bounded lattice homomorphism into a boolean algebra, then there is a unique $\BA$-morphism $\tau : \BE(L) \to C$ with $\tau \circ i = \lambda$ (see, e.g., \cite[Sec.~V.4]{BD74}). 
\[
\begin{tikzcd}
L \arrow[r, "i"] \arrow[dr, "\lambda"'] & \BE(L) \arrow[d, "\tau"]\\
& C
\end{tikzcd}
\]

Let $L = \arch(A)$. Viewing $L$ as a bounded distributive lattice, let $(B,i)$ be the free boolean extension of $L$. 
Assuming (AC), $L$ is isomorphic to the frame $\mathcal{O}(Y_A)$ of open sets of $Y_A$ (see Remark~\ref{ref: arch iso to opens}) and $B$ is isomorphic to the algebra ${\sf Con}(Y_A)$ of constructible sets of $Y_A$ (see, e.g., \cite[Sec.~V.4]{BD74}), where ${\sf Con}(Y_A)$ is the boolean subalgebra of $\wp(Y_A)$ generated by $\mathcal{O}(Y_A)$.  The isomorphism $\lambda : L \to \mathcal{O}(Y_A)$ sends $I$ to $Z_\ell(I)^c := \{ M \in Y_A \mid I \not\subseteq M\}$ (see Remark~\ref{ref: arch iso to opens}). Since $\mathcal{O}(Y_A)$ is a sublattice of $\wp(Y_A)$, we view $\lambda$ as a bounded lattice homomorphism into the boolean algebra $\wp(Y_A)$.
As $\wp(Y_A)$ is isomorphic to $\Id(B(Y_A))$ by sending $U\subseteq Y_A$ to the characteristic function $\chi_U$, the universal mapping property mentioned above yields a $\BA$-morphism $\tau : B \to \Id(B(Y_A))$ with $\tau(i(I)) = \chi_{Z_\ell(I)^c}$. By Theorem~\ref{thm: UMP}(\ref{UMP: 1}), there is a $\bal$-morphism $\sigma : \mathbb{R}[B] \to B(Y_A)$ extending $\tau$.
Since each $0 \le f \in B(Y_A)$ is equal to $\bigvee \{ f(M)\chi_{\{M\}} \mid M \in Y_A\}$,
each element of $B(Y_A)$ is a join from $\mathbb{R}[B]$ by Remark~\ref{rem: l-ring properties}(\ref{l-ring: vee + a}). Thus, by Theorem~\ref{thm: Ded}, there is an isomorphism $\theta : D(\mathbb{R}[B]) \to B(Y_A)$ satisfying $\theta(x_I) = \sigma(x_I) = \chi_{Z_\ell(I)^c}$ for each $I \in \arch(A)$. 
Let $\alpha = \theta^{-1} \circ \zeta_A$. 
\begin{figure}[h] 
\[
\begin{tikzcd}[row sep = 1pc]
L \arrow[r, "i"] \arrow[dd, "\lambda"'] & B \arrow[r] \arrow[dd, "\tau"'] & \mathbb{R}[B] \arrow[r, hookrightarrow] \arrow[ddr, "\sigma"'] & D(\mathbb{R}[B]) \arrow[dd, "\theta"]  &\\
&&&& A\arrow[ul, "\alpha"'] \arrow[dl, "\zeta_A"] \\
\mathcal{O}(Y_A) \arrow[r] & \func{Id}(B(Y_A)) \arrow[rr, hookrightarrow] && B(Y_A)  &
\end{tikzcd}
\]
\caption{The isomorphism $\theta : D(\mathbb{R}[B]) \to B(Y_A)$}\label{fig}
\end{figure}

If $0 \le a \in A$, we claim that
\[
\zeta_A(a) = \bigvee \{ r\chi_C \mid C \textrm{ closed in } Y_A \mbox{ and } r\chi_C \le \zeta_A(a) \}.
\]
It is obvious that $\zeta_A(a)$ is above the join. Let $M \in Y_A$ and set $\zeta_A(a)(M) = r$. Then $r\chi_{\{M\}}(M) = r$ and $r\chi_{\{M\}} \le \zeta_A(a)$ since $0 \le a$. From this it follows that the equation above is true. 

To make this description of $\zeta_A(a)$ point-free, let $C$ be closed in $Y_A$ and $I \in \arch(A)$ with $Z_\ell(I) = C$. We show that $r\chi_C \le \zeta_A(a)$ for some $r \ge 0$ iff $(a-r)^- \in I$.

First suppose $r\chi_C \le \zeta_A(a)$ for some $r \ge 0$. 
If $M \in C$, then $r \le \zeta_A(a)(M)$, so $a + M \ge r + M$, which means $(a-r)^- \in M$ (see Remark~\ref{rem: positive part}(\ref{positive part: 2})). Since this is true for all $M \in C$, we have $(a-r)^- \in \bigcap Z_\ell(I) = I$ (see Remark~\ref{rem: properties of arch}(\ref{properties of arch: 3})).

Conversely, let $(a-r)^- \in I$ and $M \in C$. Then $I \subseteq M$, so $(a-r)^- \in M$ which gives $a + M \ge r + M$, so $r \le \zeta_A(a)(M)$. Since $0 \le a$, this yields $r\chi_C \le \zeta_A(a)$. Consequently,
\begin{align*}
\zeta_A(a) &= \bigvee \{ r\chi_C \mid C \textrm{ closed in } Y_A \mbox{ and } r\chi_C \le \zeta_A(a) \} \\
&= \bigvee \{ r\chi_{Z_\ell(I)} \mid I \in \arch(A) \mbox{ and } (a-r)^- \in I\}.
\end{align*}

For $a \in A$ arbitrary, let $s \in \mathbb{R}$ be such that $a + s \ge 0$. Then
\begin{align*}
\zeta_A(a) &= -s + \zeta_A(a+s) \\
&= -s + \bigvee \{ r\chi_{Z_\ell(I)} \mid I \in \arch(A), (a + s -r)^- \in I\}.
\end{align*}

\begin{definition}
Let $(B,i)$ be the free boolean extension of $\arch(A)$. For ease of notation we assume that $\arch(A) \subseteq B$, and so $i$ is the identity. Thus, for $I \in \arch(A)$ we have that $x_I$ and $x_{\lnot I}$ are idempotents of $\mathbb{R}[B]$.
\end{definition}

The discussion above motivates the following point-free definition.

\begin{definition}\label{def: point-free alpha}
Define $\alpha : A \to D(\mathbb{R}[B])$ by
\[
\alpha(a) = -s + \bigvee \{ rx_{\lnot I} \mid  I \in \arch(A), (a + s - r)^- \in I \}
\]
where $s \in \mathbb{R}$ with $a + s \ge 0$.
\end{definition}
 
 \begin{remark} \label{rem: r ge 0}
 Let $a \in A$ and $s \in \mathbb{R}$ with $a + s \ge 0$. Then $(a + s)^- = 0$. Thus, for each $I \in \arch(A)$, we have that $0x_{\lnot I}$ is part of the join defining $\alpha(a)$. Consequently, we may assume $r \ge 0$ in the definition of $\alpha(a)$.
 \end{remark}
 
To show that $\alpha$ is well defined we need to show that the join in Definition~\ref{def: point-free alpha} exists and that the expression does not depend on the choice of $s$. Showing that the join exists is straightforward, but independence of $s$  
requires some work. In particular, we will utilize Lemma~\ref{lem: joins in D} given in the appendix.

\begin{proposition} \label{prop: alpha is well-defined}
$\alpha : A \to D(\mathbb{R}[B])$ is well defined.
\end{proposition}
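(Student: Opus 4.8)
The plan is to establish two things: that the join appearing in Definition~\ref{def: point-free alpha} actually exists in $D(\mathbb{R}[B])$, and that the resulting value of $\alpha(a)$ does not depend on the choice of $s$ with $a + s \ge 0$.

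For existence, since $D(\mathbb{R}[B])$ is Dedekind complete it suffices to bound the set $\{ r x_{\lnot I} \mid I \in \arch(A),\ (a+s-r)^- \in I \}$ above; by Remark~\ref{rem: r ge 0} we may assume $r \ge 0$. Fix $n \ge 1$ with $a + s \le n$, using that $1$ is a strong order unit. If $r > n$, then $(a+s-r)^- = (r - a - s) \vee 0 = r - a - s \ge (r-n)\cdot 1 > 0$, and since $0 \le (r-n)\cdot 1 \le (a+s-r)^- \in I$ and $r-n$ is invertible, we get $1 \in I$, so $I = A$. As $A$ is the top of $\arch(A)$, we have $\lnot A = 0$ in $B$, hence $x_{\lnot I} = x_{\lnot A} = 0$ and the term vanishes. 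Therefore every nonzero term has $r \le n$, and then $r x_{\lnot I} \le r\cdot 1 \le n$ because $0 \le x_{\lnot I} \le 1$ and $r \ge 0$. Thus the set is bounded above by $n$, so the join exists.

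For independence of $s$, it is enough to compare two admissible values $s \le t$ and put $d = t - s \ge 0$. Substituting $r \mapsto \rho + d$ reindexes the family defining $\alpha(a)$ through $t$: the condition $(a + t - r)^- \in I$ becomes $(a + s - \rho)^- \in I$, and the terms with $\rho < 0$ satisfy $(a+s-\rho)^- = 0 \in I$ for all $I$, so each is dominated by the term at $\rho = 0$, $I = \ar{0}$, namely $d\,x_{\lnot \ar{0}} = d\cdot 1 = d$ (here $\ar{0} = \{0\}$ and $x_{\lnot\{0\}} = 1$). Hence we may restrict to $\rho \ge 0$, and independence of $s$ reduces to the single identity
\[
d + \bigvee \{ r x_{\lnot I} \mid (a+s-r)^- \in I \} = \bigvee \{ (r+d) x_{\lnot I} \mid (a+s-r)^- \in I \}.
\]
The inequality $\ge$ is immediate, since $(r+d)x_{\lnot I} = r x_{\lnot I} + d\,x_{\lnot I} \le r x_{\lnot I} + d$, so each term of the right-hand join lies below $d + \bigvee\{ r x_{\lnot I}\}$.

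The reverse inequality is where the real work lies and is the main obstacle. It cannot be checked term by term: writing $r x_{\lnot I} + d = (r+d)x_{\lnot I} + d\,x_I$, the extra summand $d\,x_I$ is \emph{not} dominated by any single member of the right-hand family (in the motivating picture it is supported on the complement of $Z_\ell(I)$, which no single two-valued term reaches), so one must control the whole join at once. My plan is to organize this join along the family of archimedean hulls $\ar{(a+s-r)^-}$, which increases with $r$, so that the complementary idempotents $x_{\lnot \ar{(a+s-r)^-}}$ decrease in $r$ and attain the full unit at $r = 0$ (because $(a+s)^- = 0$ forces $\ar{0} = \{0\}$ and $x_{\lnot\{0\}} = 1$). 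It is exactly the presence of the unit at this bottom layer that allows the constant $d$ to be absorbed. To make this rigorous in a point-free way I would invoke Lemma~\ref{lem: joins in D}, which supplies the needed description of joins of scalar multiples of idempotents in $D(\mathbb{R}[B])$; combined with the translation and scaling properties of joins in Remark~\ref{rem: l-ring properties}, it yields $d + \bigvee\{ r x_{\lnot I}\} \le \bigvee\{ (r+d) x_{\lnot I}\}$ and hence the identity, completing the proof that $\alpha$ is well defined.
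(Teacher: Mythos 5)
Your proposal is correct and takes essentially the same approach as the paper: existence of the join is proved by bounding the admissible coefficients via the unit (the paper uses $\|a\|+s+1$ where you use $n$ with $a+s\le n$), and independence of $s$ is reduced, after reindexing, to the constant-absorption identity that is exactly Lemma~\ref{lem: joins in D}(\ref{joins in D: 3}), the same appendix lemma the paper invokes. The only difference is organizational: the paper applies that lemma symmetrically to obtain $f+t=s+g$ for two admissible shifts, whereas you normalize to $s\le t$ and prove a single identity.
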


\begin{proof}
Let $a \in A$ and $s \in \mathbb{R}$ with $a + s \ge 0$. We first show that 
\[
\{ r x_{\lnot I} \mid I \in \arch(A), (a+s-r)^- \in I\}
\] 
is bounded above, so the join exists in $D(\mathbb{R}[B])$. Let $I \in \arch(A)$. If $I = A$, then $x_{\lnot I} = 0$, so $rx_{\lnot I} = 0$. Suppose that $I \ne A$. If $r > \|a\| + s + 1$, then $a + s - r \le \|a\| + s - r < -1$, so $(a + s -r)^- > 1$. Therefore, $(a + s -r)^- \notin I$. Thus, if $(a + s -r)^- \in I$, then $r \le \|a\| + s + 1$, and so $rx_{\lnot I} \le \|a\| + s +1$ as $x_{\lnot I}$ is an idempotent. From this it follows that the set above is bounded by $\|a\| + s + 1$, and hence the join defining $\alpha(a)$ exists.

We next show that $\alpha(a)$ does not depend on $s$. Let $0 \le s, t \in \mathbb{R}$ with $a + s, a + t \ge 0$. Set $f = \bigvee \{ rx_{\lnot I} \mid I \in \arch(A), (a + s - r)^- \in I\}$ and $g = \bigvee \{ rx_{\lnot I} \mid I \in \arch(A), (a + t - r)^- \in I\}$. Then $f, g \in D(\mathbb{R}[B])$ by the previous paragraph. By Lemma~\ref{lem: joins in D}(\ref{joins in D: 3}) and Remark~\ref{rem: l-ring properties}(\ref{l-ring: vee + a}),
\begin{align*}
f + t &= \bigvee \{ (r + t)x_{\lnot I} \mid (a+s-r)^- \in I\} \\
&= \bigvee \{ ux_{\lnot I} \mid (a + s + t - u)^- \in I\} \\
&= \bigvee \{ (v+s)x_{\lnot I} \mid (a + t - v)^- \in I\} \\
&= s + g.
\end{align*}
Therefore, $-s + f = -t + g$, which proves that the formula defining $\alpha(a)$ does not depend on the choice of $s$. Thus, $\alpha$ is well defined.
\end{proof}

We next show that $\alpha$ preserves order and addition by a scalar.

\begin{lemma} \label{lem: preservation of adding a scalar}
\begin{enumerate}
\item[]
\item \label{preservation of adding a scalar: 1} If $0 \le a\in A$, then $\alpha(a) = \bigvee \{ rx_{\lnot I} \mid 0 \le r, I \in \arch(A), (a-r)^- \in I\}$.
\item \label{preservation of adding a scalar: 2} $\alpha$ is order preserving.
\item \label{preservation of adding a scalar: 3} If $a\in A$ and $t\in \mathbb{R}$, then $\alpha(a + t) = \alpha(a) + t$.
\end{enumerate}
\end{lemma}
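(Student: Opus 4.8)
The plan is to read all three parts off Definition~\ref{def: point-free alpha} together with the fact, established in Proposition~\ref{prop: alpha is well-defined}, that $\alpha(a)$ does not depend on the admissible shift $s$; nothing beyond this and the basic $\ell$-ideal machinery is needed. Concretely, part~(\ref{preservation of adding a scalar: 1}) is the special case $s = 0$ of the definition, part~(\ref{preservation of adding a scalar: 3}) is a reindexing of the shift, and part~(\ref{preservation of adding a scalar: 2}) rests on comparing the index sets of the two defining joins.

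For part~(\ref{preservation of adding a scalar: 1}), when $0 \le a$ the value $s = 0$ is admissible since $a + 0 = a \ge 0$, so the definition reads $\alpha(a) = \bigvee\{ r x_{\lnot I} \mid I \in \arch(A),\ (a-r)^- \in I\}$. By Remark~\ref{rem: r ge 0} the joinands with $r < 0$ lie below $0\cdot x_{\lnot I} = 0$, which is itself a joinand, so they may be discarded; this restricts the join to $r \ge 0$ and gives exactly the claimed formula. For part~(\ref{preservation of adding a scalar: 3}), fix $t \in \mathbb{R}$ and choose $s \ge 0$ large enough that both $a + s \ge 0$ and $s \ge t$, so that $s - t \ge 0$ is an admissible shift for $a + t$ (as $(a+t) + (s-t) = a + s \ge 0$) that stays within the nonnegative regime used in Proposition~\ref{prop: alpha is well-defined}. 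Computing $\alpha(a+t)$ with this shift, the defining condition $((a+t) + (s-t) - r)^- \in I$ becomes $(a + s - r)^- \in I$, so the join is literally the one defining $\alpha(a)$, while the scalar prefactor is $-(s-t) = t - s$. Hence $\alpha(a+t) = t + \bigl(-s + \bigvee\{r x_{\lnot I} \mid (a + s - r)^- \in I\}\bigr) = t + \alpha(a)$.

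The main content is part~(\ref{preservation of adding a scalar: 2}). Given $a \le b$, fix $s$ with $a + s \ge 0$; then $b + s \ge a + s \ge 0$, so the same $s$ computes both $\alpha(a)$ and $\alpha(b)$. The key observation is that the negative part is order-reversing: from $a + s - r \le b + s - r$ we obtain $(a+s-r)^- \ge (b + s - r)^- \ge 0$. Since $I$ is an $\ell$-ideal and membership is downward closed with respect to $|\cdot|$, the condition $(a+s-r)^- \in I$ forces $(b+s-r)^- \in I$. Consequently every joinand $r x_{\lnot I}$ appearing in the join for $\alpha(a)$ also appears in the join for $\alpha(b)$, i.e.\ the family defining $\alpha(a)$ is contained in that defining $\alpha(b)$. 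As both joins exist by Proposition~\ref{prop: alpha is well-defined}, monotonicity of suprema over nested families gives that the join for $a$ is bounded above by the join for $b$, and adding the common scalar $-s$ yields $\alpha(a) \le \alpha(b)$.

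I expect the only genuine step to watch to be the index-set containment in part~(\ref{preservation of adding a scalar: 2}): it hinges on combining the order-reversal of $(\,\cdot\,)^-$ with the $\ell$-ideal closure property $|x|\le|y|,\,y\in I\Rightarrow x\in I$, after which the inequality is routine monotonicity of joins. The remaining care is bookkeeping, namely choosing the shifts so as to remain inside the nonnegative range in which Proposition~\ref{prop: alpha is well-defined} was proved, rather than any real difficulty.
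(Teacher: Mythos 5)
Your proposal is correct, and parts (\ref{preservation of adding a scalar: 1}) and (\ref{preservation of adding a scalar: 2}) coincide with the paper's proof: the paper also sets $s=0$ and invokes Remark~\ref{rem: r ge 0} for the first part, and for the second part uses the same observation that $(b+s-r)^- \le (a+s-r)^-$ forces the index set of the join for $\alpha(a)$ into that for $\alpha(b)$. Where you genuinely diverge is part~(\ref{preservation of adding a scalar: 3}). The paper picks shifts $s_1, s_2$ with $a+s_1 \ge 0$ and $t + s_2 \ge 0$, computes $\alpha(a)+s_2+t$, and must then push the scalar $s_2+t$ inside the join of idempotent multiples; this step is not formal (since $(r+t)x_{\lnot I} \ne rx_{\lnot I}+t$ in general) and is exactly what the technical Lemma~\ref{lem: joins in D}(\ref{joins in D: 3}) is for. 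You instead exploit the freedom in the choice of shift guaranteed by Proposition~\ref{prop: alpha is well-defined}: computing $\alpha(a+t)$ with the shift $s-t$ (kept nonnegative by taking $s \ge t$, so you stay in the regime where well-definedness was proved) makes its defining join \emph{literally identical} to the join defining $\alpha(a)$ with shift $s$, so the two values differ only in the scalar prefactors $-(s-t)$ versus $-s$, i.e.\ by exactly $t$. This avoids any manipulation of joins and in particular avoids re-invoking Lemma~\ref{lem: joins in D}(\ref{joins in D: 3}) inside this proof (that lemma is of course still used upstream, in the proof of well-definedness itself). Your route is shorter and arguably cleaner; the paper's route is a direct calculation that does not require choosing the shift for $a+t$ adapted to the shift for $a$.
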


\begin{proof}
(\ref{preservation of adding a scalar: 1}) Set $s = 0$ and apply Remark~\ref{rem: r ge 0}.

(\ref{preservation of adding a scalar: 2}) Suppose that $a \le b$. Choose $0 \le s$ with $a + s \ge 0$. Then $b + s \ge 0$. Therefore, $(a+s-r)^- \in I$ implies $(b+s-r)^- \in I$ because $(b+s-r)^- \le (a+s-r)^-$. From this it follows that $\alpha(a) \le \alpha(b)$.

(\ref{preservation of adding a scalar: 3}) Let $s_1,s_2\in\mathbb{R}$ be such that $a + s_1 \ge 0$ and $t + s_2 \ge 0$. Set $s = s_1 + s_2$. By Remark~\ref{rem: r ge 0},
\[
\alpha(a) = -s_1 + \bigvee \{ rx_{\lnot I} \mid 0 \le r, I \in \arch(A), (a + s_1 - r)^- \in I\},
\] 
so by Lemma~\ref{lem: joins in D}(\ref{joins in D: 3}), we have 
\begin{align*}
\alpha(a) + s_2 + t &= -s_1 + \bigvee \{ (s_2 + t + r)x_{\lnot I} \mid (a + s_1 - r)^- \in I\} \\
&= -s_1 + \bigvee \{ (s_2 + t + r)x_{\lnot I} \mid (a + s + t - (s_2 + t + r))^- \in I\} \\
&= s_2 -s + \bigvee \{ ux_{\lnot I} \mid (a + s + t - u)^- \in I\} \\
&= s_2 + \alpha(a + t)
\end{align*}
\color{black}since $a+t+s \ge 0$\color{black}. Thus, $\alpha(a + t) = \alpha(a) + t$.
\end{proof}

We are ready to prove that $(D(\mathbb{R}[B]),\alpha)$ is 
a canonical extension of $A$. This we do in the next three propositions.

\begin{proposition}
\label{prop: monomorphism}
$\alpha:A\to D(\mathbb{R}[B])$ is a $\bal$-monomorphism.
\end{proposition}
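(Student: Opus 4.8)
The plan is to check that $\alpha$ preserves each operation of a $\bal$-morphism and then that it is injective. Lemma~\ref{lem: preservation of adding a scalar} already supplies that $\alpha$ is order preserving and that $\alpha(a+t)=\alpha(a)+t$ for $t\in\mathbb{R}$, so by shifting I can reduce every computation to nonnegative elements, where Lemma~\ref{lem: preservation of adding a scalar}(\ref{preservation of adding a scalar: 1}) lets me use $\alpha(a)=\bigvee\{rx_{\lnot I}\mid 0\le r,\ I\in\arch(A),\ (a-r)^-\in I\}$. Throughout I will use the boolean identities in $B$ which, under $\mathbb{R}[-]$, read $x_{\lnot I}\,x_{\lnot J}=x_{\lnot(I\vee J)}$ and $x_{\lnot I}\vee x_{\lnot J}=x_{\lnot(I\wedge J)}$ (Remark~\ref{rem: Specker facts} together with De Morgan in $B$), where $I\vee J$ and $I\wedge J=I\cap J$ are the frame operations of $\arch(A)$.

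For addition and the lattice operations I would, for $0\le a,b$, expand $\alpha(a)\,\square\,\alpha(b)$ (with $\square\in\{+,\vee,\wedge\}$) into a single join indexed by pairs $(I,J)$, using Remark~\ref{rem: l-ring properties}(\ref{l-ring: vee + a}),(\ref{l-ring: vee wedge a}) together with Lemma~\ref{lem: joins in D}, and then compare it termwise with the join defining $\alpha(a\,\square\,b)$. Each comparison amounts to translating the hypotheses $(a-r)^-\in I$ and $(b-s)^-\in J$ into the corresponding membership condition for $a\,\square\,b$ relative to the appropriate combination of $I$ and $J$ in $\arch(A)$, which is exactly what the archimedean-ideal estimates collected in the appendix provide. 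The crux is multiplication: by Remark~\ref{rem: l-ring properties}(\ref{l-ring: multiplication}) one has $\alpha(a)\alpha(b)=\bigvee\{rs\,x_{\lnot(I\vee J)}\mid (a-r)^-\in I,\ (b-s)^-\in J\}$, which must be matched with $\alpha(ab)=\bigvee\{t\,x_{\lnot K}\mid 0\le t,\ (ab-t)^-\in K\}$. The inequality $\le$ needs that $(a-r)^-\in I$ and $(b-s)^-\in J$ force $(ab-rs)^-\in I\vee J$, while $\ge$ needs each $K$ arising from $(ab-t)^-$ to be approximated by joins $I\vee J$; this is where the technical lemmas on archimedean hulls (building on Proposition~\ref{prop: archimedean hull}) do the real work, since the idempotents $x_{\lnot K}$ appearing in $\alpha(ab)$ need not be expressible directly through the $x_{\lnot I},x_{\lnot J}$. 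Preservation of the unit is easy: taking $r=1$ and $I=\{0\}$, the least archimedean $\ell$-ideal (so $x_{\lnot\{0\}}=1$), shows $\alpha(1)\ge 1$, while for $r>1$ the constant $(1-r)^-=r-1$ forces $I=A$ and hence $x_{\lnot I}=0$, giving $\alpha(1)=1$.

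Finally, injectivity. Once $\alpha$ is known to be a $\bal$-morphism it preserves absolute values by Remark~\ref{rem: l-ring properties}(\ref{l-ring: preservation of abs value}), so $\alpha(a)=\alpha(b)$ yields $\alpha(|a-b|)=0$, and it suffices to prove $\alpha(c)=0\Rightarrow c=0$ for $0\le c$. If $c\ne 0$, I would choose $r=\|c\|>0$; then $(c-r)^-=(r-c)^+$ is not bounded below by any positive constant (as $c\not\le r'$ for every real $r'<\|c\|$), so its principal $\ell$-ideal is proper, and hence by Proposition~\ref{prop: archimedean hull} so is $I:=\ar{(c-r)^-}$. Thus $x_{\lnot I}$ is a nonzero idempotent and $rx_{\lnot I}$ is one of the join terms defining $\alpha(c)$, so $\alpha(c)\ge rx_{\lnot I}>0$, a contradiction. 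I expect multiplicativity to be the main obstacle, precisely because relating the product (or sum) of two join expressions to the single join defining $\alpha(ab)$ (or $\alpha(a+b)$) is not formal and depends on the careful archimedean-ideal computations deferred to the appendix.
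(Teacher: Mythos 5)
Your skeleton matches the paper's proof: verify the operations one at a time, use Lemma~\ref{lem: preservation of adding a scalar} to reduce to $0\le a,b$, expand the defining joins, and prove injectivity at the end. The parts you execute in full are correct: the unit computation, the injectivity argument (which is essentially the paper's own --- it also picks $0<r\le\|a\|$ and shows $\ar{(a-r)^-}$ is proper in the same way), and the ``easy'' inequalities $\alpha(a)+\alpha(b)\le\alpha(a+b)$ and $\alpha(a)\alpha(b)\le\alpha(ab)$, together with both directions for $\wedge$ (one of which is just monotonicity), all of which do follow by the termwise translation you describe.

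The genuine gap is in the reverse inequalities $\alpha(a+b)\le\alpha(a)+\alpha(b)$ and $\alpha(ab)\le\alpha(a)\alpha(b)$, which are the crux of the proposition and for which you give no mechanism --- and the termwise scheme you propose provably cannot supply one. Concretely, take $A=C([0,1])$, $a(x)=x$, $b(x)=1-x$: the term $1\cdot x_{\lnot\{0\}}=1$ occurs in the join defining $\alpha(a+b)=\alpha(1)$, but a termwise comparison would need reals $r,s$ with $r+s\ge 1$ and ideals $I,J$ with $(a-r)^-\in I$, $(b-s)^-\in J$, and $x_{\lnot(I\vee J)}\ge x_{\lnot\{0\}}=1$, i.e.\ $I\vee J=\{0\}$; but then $(a-r)^-=0$ forces $r\le 0$ and likewise $s\le 0$. (The same failure afflicts your proposed direct termwise treatment of $\vee$; the paper instead derives $\vee$ from $\wedge$ and scalar multiplication.) The paper's way around this is a two-step mechanism that your proposal neither names nor reconstructs: Lemma~\ref{lem: f below g} shows it suffices to dominate each term $tx_{\lnot I}\le\alpha(a+b)$ only after \emph{enlarging} $I$ to a larger proper archimedean $\ell$-ideal, and the enlargement is manufactured by the supremum trick of Lemma~\ref{lem: not A}: taking $t=\sup\{r\mid rx_{\lnot I}\le\alpha(a+b)\}$ guarantees $(a+b-t)^-\in I$ and that $J:=I\vee\ar{(a+b-t)^+}$ is still proper (via Lemmas~\ref{lem: arch} and~\ref{lem: I = A}); a second application gives a proper $K\supseteq J$ containing $a+b-t$ and $a-s$, hence $b-(t-s)$, so that $sx_{\lnot K}\le\alpha(a)$ and $(t-s)x_{\lnot K}\le\alpha(b)$, whence $tx_{\lnot K}\le\alpha(a)+\alpha(b)$; multiplication is handled analogously by splitting $t$ as $r\cdot(t/r)$. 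The lemmas you actually cite (Proposition~\ref{prop: archimedean hull}, Lemma~\ref{lem: joins in D}) do not perform this job, and ``approximating $K$ by joins $I\vee J$'' is not what happens --- the ideal is enlarged, not decomposed --- so the two central identities remain unproved in your proposal.
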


\begin{proof}
We first show that $\alpha$ is a $\bal$-morphism. This we do by showing that $\alpha$ preserves addition, meet, scalar multiplication, join, and multiplication.

\begin{claim} \label{claim: preserves +}
$\alpha$ preserves addition. 
\end{claim}

\begin{proofclaim}
Let $a, b \in A$. We first assume that $0 \le a, b$.
Because $\alpha$ is order preserving (see Lemma~\ref{lem: preservation of adding a scalar}(\ref{preservation of adding a scalar: 2})) and $a,b \le a+b$, we have that $\alpha(a), \alpha(b) \le \alpha(a+b)$. In addition, by Lemma~\ref{lem: preservation of adding a scalar}(\ref{preservation of adding a scalar: 1}) (and Remarks~\ref{rem: l-ring properties}(\ref{l-ring: vee + a},\ref{l-ring: disjoint}) and~\ref{rem: Specker facts}(\ref{Specker facts: 2})),
\begin{align*}
\alpha(a) + \alpha(b) &= \bigvee \{ rx_{\lnot I} \mid (a-r)^- \in I\} + \bigvee \{ sx_{\lnot J} \mid (b-s)^- \in J\} \\
&= \bigvee \{ rx_{\lnot I} + sx_{\lnot J} \mid (a-r)^- \in I, (b-s)^- \in J \} \\
&= \bigvee \{ rx_{\lnot I \wedge J} + sx_{\lnot J \wedge I} + (r+s) x_{\lnot I \wedge \lnot J} \mid (a-r)^- \in I, (b-s)^- \in J \} \\
&= \bigvee \{ rx_{\lnot I \wedge J} \vee sx_{\lnot J \wedge I} \vee (r+s) x_{\lnot (I \vee J)} \mid (a-r)^- \in I, (b-s)^- \in J \}.
\end{align*}
We use this to show that $\alpha(a) + \alpha(b) \le \alpha(a+b)$. We have $rx_{\lnot I \wedge J} \le rx_{\lnot I} \le \alpha(a) \le \alpha(a+b)$ and $sx_{\lnot J \wedge I} \le sx_{\lnot J} \le \alpha(b) \le \alpha(a+b)$. It remains to show that $(r+s)x_{\lnot (I \vee J)} \le \alpha(a+b)$. This is trivial if $I \vee J = A$ since then $x_{\lnot (I \vee J)} = 0$. Otherwise, set $K = I \vee J$. We have $(a-r)^-, (b-s)^- \in K$, and since $0 \le (a+b - (r+s))^- \le (a-r)^- + (b-s)^-$ by Remark~\ref{rem: l-ring properties}(\ref{l-ring: inequality}), we see that $(a+b-(r+s))^- \in K$. Therefore, $(r+s)x_{\lnot K} \le \alpha(a+b)$ by Lemma~\ref{lem: preservation of adding a scalar}(\ref{preservation of adding a scalar: 1}). This completes the proof that $\alpha(a) + \alpha(b) \le \alpha(a+b)$.

We use Lemma~\ref{lem: f below g} to show the reverse inequality. Suppose that $tx_{\lnot I} \le \alpha(a + b)$. We may assume that $t = \sup \{ r \mid rx_{\lnot I} \le \alpha(a+b) \}$. Then $J := I \vee \ar{(a+b-t)^+} \ne A$ and $(a+b-t)^- \in I$ by Lemmas~\ref{lem: arch} and \ref{lem: not A}, so $(a+b-t)^+ - (a+b - t)^- \in J$, and hence $a + b - t \in J$ by Remark~\ref{rem: l-ring properties}(\ref{l-ring: positive plus negative}). Let $s = \sup\{ r \mid rx_{\lnot J} \le \alpha(a) \}$. Then $sx_{\lnot J} \le \alpha(a)$, $(a-s)^- \in J$, and $K := J \vee \ar{(a-s)^+} \ne A$, again by Lemmas~\ref{lem: arch} and \ref{lem: not A}. We have $a-s \in K$, so $b-(t-s) = (a + b - t) - (a - s) \in K$. Therefore, $(b - (t-s))^- \in K$, so $(t-s) x_{\lnot K} \le \alpha(b)$ by Lemma~\ref{lem: preservation of adding a scalar}(\ref{preservation of adding a scalar: 1}). Since $J \subseteq K$, we have $sx_{\lnot K} \le sx_{\lnot J} \le \alpha(a)$. From this we see that $tx_{\lnot K} \le \alpha(a) + \alpha(b)$. Consequently, by Lemma~\ref{lem: f below g}, $\alpha(a+b) \le \alpha(a) + \alpha(b)$. This shows that $\alpha(a + b) = \alpha(a) + \alpha(b)$ for $0 \le a, b$. 

To complete the argument, let $a,b \in A$ be arbitrary and choose $t \in \mathbb{R}$ with $a+t, b+t \ge 0$. Then $\alpha(a+b+2t) = \alpha(a+t) + \alpha(b + t)$, so $\alpha(a+b)+2t = \alpha(a) + \alpha(b) + 2t$ by Lemma~\ref{lem: preservation of adding a scalar}(\ref{preservation of adding a scalar: 3}). Therefore, $\alpha(a+b) = \alpha(a) + \alpha(b)$.
\end{proofclaim}

\begin{claim}
$\alpha$ preserves meet.
\end{claim}

\begin{proofclaim}
Let $a,b \in A$. Since $\alpha$ is order preserving, 
$\alpha(a \wedge b) \le \alpha(a) \wedge \alpha(b)$. Because $\alpha$ preserves addition (by a scalar), we may assume $0 \le a,b$. By Lemmas~\ref{lem: preservation of adding a scalar}(\ref{preservation of adding a scalar: 1}) and~\ref{rem: re meet sf}(\ref{re meet sf: 1}) (and Remarks~\ref{rem: l-ring properties}(\ref{l-ring: vee wedge a}) and \ref{rem: Specker facts}(\ref{Specker facts: 1})), we have 
\begin{align} \label{eqn: meet}
\begin{split}
\alpha(a) \wedge \alpha(b) &= \bigvee \{ rx_{\lnot I} \mid  (a-r)^- \in I\} \wedge \bigvee \{ sx_{\lnot J} \mid (b-s)^- \in J \} \\
&= \bigvee \{ rx_{\lnot I} \wedge sx_{\lnot J} \mid (a-r)^- \in I, (b-s)^- \in J \} \\
&= \bigvee \{ \min(r, s) x_{\lnot I \wedge \lnot J} \mid (a-r)^- \in I, (b-s)^- \in J \} \\
&= \bigvee \{ \min(r, s) x_{\lnot (I \vee J)} \mid (a-r)^- \in I, (b-s)^- \in J \}.
\end{split}
\end{align}
Suppose $(a-r)^- \in I$, $(b-s)^- \in J$, and assume without loss of generality that $r \le s$. Since $(b-r)^- \le (b-s)^-$, we have $(a-r)^-, (b-r)^- \in I \vee J$. Therefore, $(a-r)^- \vee (b-r)^- \in I \vee J$ (because $\ell$-ideals are closed under $\vee$). Since
\begin{align*}
(a-r)^- \vee (b-r)^- &= [(r-a)\vee 0] \vee [(r-b)\vee 0] = [(r-a) \vee (r-b)]\vee 0 \\
&= [r + (-a \vee -b)] \vee 0 = [r - (a \wedge b)] \vee 0 = ((a \wedge b) - r)^-,
\end{align*}
we see that $((a\wedge b) - r)^- \in I \vee J$. Therefore, $rx_{\lnot(I \vee J)} \le \alpha(a \wedge b)$, and hence (\ref{eqn: meet}) implies that $\alpha(a) \wedge \alpha(b) \le \alpha(a \wedge b)$. Thus, $\alpha(a) \wedge \alpha(b) = \alpha(a \wedge b)$.
\end{proofclaim}

\begin{claim}
$\alpha$ preserves scalar multiplication and join.
\end{claim}

\begin{proofclaim}
Since $\alpha$ is a group homomorphism, to show that it preserves scalar multiplication it suffices to show $\alpha(sa) = s\alpha(a)$ for each $a \in A$ and $0 < s \in \mathbb{R}$. Moreover, by Lemma~\ref{lem: preservation of adding a scalar}(\ref{preservation of adding a scalar: 3}), it suffices to assume $0 \le a$. Since $(sa-r)^- \in I$ iff $(a-r/s)^- \in I$, by Lemma~\ref{lem: preservation of adding a scalar}(\ref{preservation of adding a scalar: 1}) and Remark~\ref{rem: l-ring properties}(\ref{l-ring: join times scalar}), we have
\begin{align*}
\alpha(sa) &= \bigvee \{ rx_{\lnot I} \mid (sa - r)^- \in I \} = \bigvee \{ rx_{\lnot I} \mid (a - r/s)^- \in I \} \\
&= s\bigvee \{ (r/s)x_{\lnot I} \mid (a - r/s)^- \in I \} = s\alpha(a).
\end{align*}
Since $\alpha$ preserves meet and scalar multiplication, it preserves join by Remark~\ref{rem: l-ring properties}(\ref{l-ring: negation and join}).
\end{proofclaim}

\begin{claim}
$\alpha$ preserves multiplication.
\end{claim}

\begin{proofclaim}
First suppose that $0 \le a, b$. By Lemma~\ref{lem: preservation of adding a scalar}(\ref{preservation of adding a scalar: 1}) and Remarks~\ref{rem: l-ring properties}(\ref{l-ring: multiplication}) and \ref{rem: Specker facts}(\ref{Specker facts: 1}), we have
\begin{align*}
\alpha(a) \alpha(b) &= \bigvee \{ rx_{\lnot I} \mid 0 \le r, (a-r)^- \in I\} \cdot \bigvee \{ sx_{\lnot J} \mid 0 \le s, (b-s)^- \in J\} \\
&= \bigvee \{ rsx_{\lnot I} x_{\lnot J} \mid (a-r)^- \in I, (b-s)^- \in J\} \\
&= \bigvee \{ rsx_{\lnot(I \vee J)} \mid (a-r)^- \in I, (b-s)^- \in J\}.
\end{align*}
Also $\alpha(ab) = \bigvee \{ tx_{\lnot K} \mid (ab-t)^- \in K \}$. To see that $\alpha(a)\alpha(b) \le \alpha(ab)$, it suffices to show that $rsx_{\lnot{(I \vee J)}} \le \alpha(ab)$, where $0 \le r, s$, $(a-r)^- \in I$, and $(b-s)^- \in J$. Set $K = I \vee J$. Then $(a-r)^-, (b-s)^- \in K$. Therefore, $r + K \le a + K$ and $s + K \le b + K$. Because $0 \le r, s$, we have
\[
rs + K = (r+K)(s+K) \le (a+K)(b+K) = ab + K.
\]
This gives $(ab-rs)^- \in K$. Thus, $rsx_{\lnot(I \vee J)} = rsx_{\lnot K} \le \alpha(ab)$ by Lemma~\ref{lem: preservation of adding a scalar}(\ref{preservation of adding a scalar: 1}). Consequently, 
$\alpha(a)\alpha(b) \le \alpha(ab)$.

For the reverse inequality, we use Lemma~\ref{lem: f below g} and argue as in the proof of Claim~\ref{claim: preserves +}. Let $I \in \arch(A)$ and $0 \le t$ with $tx_{\lnot I} \le \alpha(ab)$. Then $(ab - t)^- \in I$ by Lemma~\ref{lem: not A}(\ref{not A: 2}). We may assume that $t = \sup\{ r \mid (ab-r)^- \in I\}$ by Lemma~\ref{lem: not A}(\ref{not A: 1}). Therefore, $J := I \vee \ar{(ab-t)^+} \ne A$ by Lemma~\ref{lem: not A}(\ref{not A: 3}). Set $r = \sup\{ p \mid (a-p)^- \in J\}$. Then $(a-r)^- \in J$ and $K := J \vee \ar{(a-r)^+} \ne A$. We have $ab - t \in J$ and $a-r \in K$. Therefore, $ab - t, (a-r)b \in K$. Thus, $rb - t \in K$. If $r = 0$, then $t \in K$, which implies $t = 0$ since $K \ne A$ and nonzero real numbers are units in $A$. It is then clear that $tx_{\lnot K} \le \alpha(a)\alpha(b)$ since $0 \le \alpha(a)\alpha(b)$. If $r \ne 0$, then $b - t/r \in K$. Therefore, $(t/r)x_{\lnot K} \le \alpha(b)$ and $rx_{\lnot K} \le \alpha(a)$, so $tx_{\lnot K} \le \alpha(a)\alpha(b)$ since $x_{\lnot K}$ is an idempotent. Thus, by Lemma~\ref{lem: f below g}, $\alpha(ab) \le \alpha(a)\alpha(b)$. This shows that $\alpha(ab) = \alpha(a)\alpha(b)$ for $0 \le a, b$.

For $a,b$ arbitrary, since $a = a^+ - a^-$ and $b = b^+ - b^-$ (see Remark~\ref{rem: l-ring properties}(\ref{l-ring: positive plus negative})), we have
\[
ab = (a^+ - a^-)(b^+ - b^-) = (a^+b^+ + a^- b^-) - (a^+ b^- + a^- b^+)
\]
By the previous case and Claim~\ref{claim: preserves +},
\begin{align*}
\alpha(ab) &= \alpha(a^+b^+ + a^- b^-) - \alpha(a^+ b^- + a^- b^+) \\
&= (\alpha(a^+ b^+) + \alpha(a^- b^-)) - (\alpha(a^+ b^-) + \alpha(a^- b^+)) \\
&= (\alpha(a^+)\alpha(b^+) + \alpha(a^-)\alpha(b^-)) - (\alpha(a^+)\alpha(b^-) + \alpha(a^-)\alpha(b^+)) \\
&= (\alpha(a^+) - \alpha(a^-))(\alpha(b^+) - \alpha(b^-)) = \alpha(a^+ - a^-)\alpha(b^+ - b^-)\\
&= \alpha(a)\alpha(b).
\end{align*}
\end{proofclaim}

This completes the proof that $\alpha$ is a $\bal$-morphism.
It is left to show that $\alpha$ is a monomorphism. This we do by showing that $\alpha(a) \ne 0$ for $a \ne 0$. Because $\alpha(|a|) = |\alpha(a)|$ (see Remark~\ref{rem: l-ring properties}(\ref{l-ring: preservation of abs value})) and $a \ne 0$ iff $|a| \ne 0$, it suffices to assume $a \ge 0$. Since $a \ne 0$, we have $\|a\| > 0$. Choose $r\in\mathbb{R}$ with $0 < r \le \|a\|$ and set $I = \ar{(a-r)^-}$. We claim that $I \ne A$. To see this, if $I = A$, by Lemma~\ref{lem: arch}(\ref{arch: 2}) there is $n$ with $1 \le n(a-r)^-$, so $1/n \le (a-r)^- = (r-a)\vee 0$. Therefore, $1/n \le r-a$ by Lemma~\ref{rem: re meet sf}(\ref{re meet sf: 3}). This implies $a \le r - 1/n < r$, which contradicts the inequality $r \le \|a\|$ since $\|a\|$ is the smallest real number above $a$. Thus, $\lnot I \ne 0$, so $x_{\lnot I} \ne 0$, and hence $0 < rx_{\lnot I} \le \alpha(a)$. This shows that $\alpha(a) \ne 0$.
\end{proof}

\begin{proposition}\label{prop: dense}
$\alpha:A\to D(\mathbb{R}[B])$ is dense.
\end{proposition}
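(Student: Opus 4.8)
The plan is to show that every element of $D(\mathbb{R}[B])$ is a join of meets from $\alpha[A]$. Since $A$ is identified with its image in $D(\mathbb{R}[B])$ and this image is join dense (by the defining property of the Dedekind completion), it suffices to prove that each element of $\mathbb{R}[B]$ is already a meet of elements from $\alpha[A]$. By Remark~\ref{rem: Specker facts}, a general element of $\mathbb{R}[B]$ can be written as $\sum_{k} r_k x_{b_k}$ with the $b_k$ pairwise disjoint, and each $b_k$ is a finite combination of elements $x_I$ and $x_{\lnot I}$ for $I \in \arch(A)$. Because $\alpha$ preserves all finite lattice and ring operations (Proposition~\ref{prop: monomorphism}) and these operations commute appropriately with meets, the problem reduces to showing that the generators $x_{\lnot I}$ (equivalently $x_I$), for $I \in \arch(A)$, are meets of elements from $\alpha[A]$.

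The key step is therefore the following: for each $I \in \arch(A)$, I would show that
\[
x_{\lnot I} = \bigwedge \{ \alpha(a) \mid a \in A,\ a \ge 0,\ (a-1)^- \in I,\ a \le 1 \}
\]
or, more cleanly, that $x_{\lnot I}$ is the meet over a suitable family of elements $\alpha(a)$ that approximate the idempotent from above. The intuition comes from the pointful picture sketched before Definition~\ref{def: point-free alpha}: under the isomorphism $\theta : D(\mathbb{R}[B]) \to B(Y_A)$ we have $x_{\lnot I} \mapsto \chi_{Z_\ell(I)}$, and $\chi_{Z_\ell(I)}$ is the pointwise infimum of those continuous functions $\zeta_A(a)$ with $0 \le a \le 1$ that take value $1$ on $Z_\ell(I)$. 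To make this point-free, I would verify directly that $x_{\lnot I} \le \alpha(a)$ for each $a$ in the chosen family (using Lemma~\ref{lem: preservation of adding a scalar}(\ref{preservation of adding a scalar: 1}): since $(a-1)^- \in I$ and $a \le 1$ force $1 \cdot x_{\lnot I} \le \alpha(a)$), and conversely that no element strictly larger than $x_{\lnot I}$ is a lower bound, invoking a separation property of archimedean $\ell$-ideals to produce, for any idempotent-witnessing element below a given lower bound, an $a$ in the family whose value drops off $Z_\ell(I)$.

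The main obstacle I anticipate is the reverse inequality, namely showing that the meet is not strictly larger than $x_{\lnot I}$. This is exactly where the compactness and regularity of $\arch(A)$ (Theorem~\ref{thm: arch frame}) and the technical lemmas on archimedean hulls in the appendix (such as Lemma~\ref{lem: not A} and the joins-in-$D$ lemma, Lemma~\ref{lem: joins in D}) should enter: given a candidate lower bound $g$ of the family, I would need to separate, for each archimedean $\ell$-ideal $J \not\supseteq I$, a witness showing $g$ cannot exceed $x_{\lnot I}$ at the ``$J$-component.'' Concretely, regularity lets one find $a \in A$ with $0 \le a \le 1$ that is $1$ on $Z_\ell(I)$ but whose archimedean hull data forces $\alpha(a)$ to be small where $I \not\subseteq J$, so that the infimum collapses precisely to $x_{\lnot I}$.

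Once the generators $x_{\lnot I}$ are expressed as meets from $\alpha[A]$, the general element of $\mathbb{R}[B]$ follows by distributing meets through the finite sums and products using Remark~\ref{rem: l-ring properties}(\ref{l-ring: vee + a}),(\ref{l-ring: vee wedge a}),(\ref{l-ring: join times scalar}) and the fact that $\alpha$ is a $\bal$-morphism, and then the join density of $\mathbb{R}[B]$ in $D(\mathbb{R}[B])$ completes the argument that $\alpha$ is dense.
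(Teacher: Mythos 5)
Your key step is the right one, and it coincides with the paper's: one indeed proves that each $x_{\lnot I}$ is a meet from $\alpha[A]$, namely $x_{\lnot I} = \bigwedge \{ \alpha(a) \mid (a-1)^- \in I\}$ with $0 \le a \le 1$. But the reduction you wrap around it contains a genuine error. It is \emph{not} true that every element of $\mathbb{R}[B]$ is a meet of elements from $\alpha[A]$, and in particular your parenthetical ``(equivalently $x_I$)'' fails: being a meet from $\alpha[A]$ is not preserved under Boolean complementation. Concretely, under the isomorphism $\theta : D(\mathbb{R}[B]) \to B(Y_A)$ one has $\theta(x_I) = \chi_{Z_\ell(I)^c}$, the characteristic function of an \emph{open} set; since meets in $B(Y_A)$ are pointwise, a meet of elements $\zeta_A(a)$ is an infimum of continuous functions and hence upper semicontinuous, whereas $\chi_U$ for $U$ open and not closed is not upper semicontinuous. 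So $x_I$ is a meet from $\alpha[A]$ only in degenerate cases; your own pointful intuition already shows the asymmetry, since it is $\chi_{Z_\ell(I)}$ (a closed set) that is an infimum of continuous functions, not $\chi_{Z_\ell(I)^c}$. A second, smaller obstruction to your ``distribute through the sum'' step: even if the relevant $x_{b_k}$ were meets, multiplication by a negative scalar $r_k$ turns meets into joins (Remark~\ref{rem: l-ring properties}(\ref{l-ring: join times scalar})), so the decomposition $\sum_k r_k x_{b_k}$ with arbitrary real coefficients does not reduce to the generators in the way you claim. (There is also a slip at the start: it is $\mathbb{R}[B]$, not $A$, that is join dense in $D(\mathbb{R}[B])$ by the defining property of the Dedekind completion.)

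The correct assembly is dual to the one you propose, and this is exactly what the paper does: rather than writing elements of $\mathbb{R}[B]$ as meets, one writes every nonnegative $f \in D(\mathbb{R}[B])$ as a \emph{join} of elements $rx_{\lnot I}$ with $0 \le r$. This is Lemma~\ref{lem: joins in D}(\ref{joins in D: 1}),(\ref{joins in D: 2}), whose content is the identity $x_J = \bigvee \{ x_{\lnot K} \mid K \vee J = A\}$ (a join, obtained from regularity of $\arch(A)$) --- precisely the replacement for your false ``equivalently $x_I$''. Each $rx_{\lnot I}$ is then a meet from $\alpha[A]$ by scaling your key identity, since $\alpha$ preserves multiplication by $r \ge 0$ and $r\bigwedge S = \bigwedge \{rs \mid s \in S\}$; finally an arbitrary $f$ is handled by choosing $n$ with $f + n \ge 0$ and using Remark~\ref{rem: l-ring properties}(\ref{l-ring: vee + a}). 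With that rerouting, your verification that $x_{\lnot I}$ is the stated meet (via Lemma~\ref{lem: preservation of adding a scalar}(\ref{preservation of adding a scalar: 1}) for the easy inequality, and Lemmas~\ref{lem: arch}(\ref{arch: 2}) and~\ref{lem: I = A}(\ref{I = A: 1}) to rule out lower bounds $rx_{\lnot J}$ with $I \not\subseteq J$) completes the proof as in the paper.
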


\begin{proof}
We first show that $x_{\lnot I}$ is a meet from $\alpha[A]$ for each $I \in \arch(A)$. By Lemma~\ref{lem: preservation of adding a scalar}(\ref{preservation of adding a scalar: 1}), if $(a-1)^- \in I$, then $x_{\lnot I} \le \alpha(a)$, so  
\[
x_{\lnot I} \le \bigwedge \{ \alpha(a) \mid (a-1)^- \in I\} =: f. 
\]
In the inequality above we may assume that $0\le a\le 1$ since $x_{\lnot I}$ is an idempotent, so $0 \le x_{\lnot I}$ and $x_{\lnot I} \le \alpha(a)$ implies that $x_{\lnot I} \le \alpha(a) \wedge 1 =\alpha(a\wedge 1)$. Therefore, we have $0 \le f \le 1$. If $I \subseteq J$, then $x_{\lnot J} \le x_{\lnot I}$. Because $0 \le f$, it is a join of elements of the form $rx_{\lnot J}$ by Lemma~\ref{lem: joins in D}(\ref{joins in D: 2}). To show $f = x_{\lnot I}$ we show that if $rx_{\lnot J} \le f$ with $r > 0$, then $J \supseteq I$ and $r \le 1$, so $rx_{\lnot J} \le x_{\lnot I}$. First, if $rx_{\lnot J} \le f$, then $rx_{\lnot J} \le 1$, so $r \le 1$ by Remark~\ref{rem: idempotent facts}(\ref{idempotent facts: 1}). Next, suppose that $I \not\subseteq J$. Then there is $K \ne A$ with $J \subseteq K$ and $K + I = A$ by Lemma~\ref{lem: arch}(\ref{arch: 2}). Therefore, there are $0 \le a,b$ with $a \in K$, $b \in I$, and $1 = a + b$ by Lemma~\ref{lem: I = A}(\ref{I = A: 1}). So $(a-1)^- = b \in I$, and hence $x_{\lnot I} \le \alpha(a)$. Suppose that there is $r > 0$ with $(a-r)^- \in J$. Then $A = \ar{a,(a-r)^-}$ by Lemma~\ref{lem: I = A}(\ref{I = A: 1}). This is impossible since $K \ne A$ but $a, (a-r)^- \in K$. This implies that $r \le 0$. This contradiction shows that $x_{\lnot I} = f$ and hence is a meet from $\alpha[A]$. 

If $0 \le f \in D(\mathbb{R}[B])$, then $f$ is a join of nonnegative elements from $\mathbb{R}[B]$. If $0 \le c \in \mathbb{R}[B]$, we can write $c$ as a sum of terms of the form $rx_b$ with $0 \le r \in \mathbb{R}$ and $b \in B$, and so $f$ is a join of such terms by Remark~\ref{rem: Specker facts}. Since each $b$ can be written as a join of terms of the form $\lnot I \wedge J$ with $I, J \in \arch(A)$, we see that $f$ is a join of elements of the form $rx_{\lnot I\wedge J}$. Therefore, by Lemma~\ref{lem: joins in D}(\ref{joins in D: 1}), $f$ is a join of elements of the form $rx_{\lnot I}$. Thus, $f$ is a join of meets from $\alpha[A]$. For $f$ arbitrary, if $f + n \ge 0$, then $f + n$ is a join of meets from $\alpha[A]$, and so $f$ is also a join of meets from $\alpha[A]$ by Remark~\ref{rem: l-ring properties}(\ref{l-ring: vee + a}). Consequently, $\alpha$ is dense.
\end{proof}

\begin{proposition} \label{prop: compact}
$\alpha : A \to D(\mathbb{R}[B])$ is compact.
\end{proposition}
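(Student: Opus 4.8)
The plan is to use the reformulation of compactness in Remark~\ref{rem: compact}(\ref{compact: join}): it suffices to show that whenever $T\subseteq A$ and $\varepsilon>0$ satisfy $\varepsilon\le\bigvee\alpha[T]$, there is a finite $T_0\subseteq T$ with $0\le\bigvee T_0$. The reason we proved that $\arch(A)$ is a \emph{compact} frame (Theorem~\ref{thm: arch frame}) is precisely so that compactness of $\alpha$ can be read off from frame compactness; this is the point-free avatar of the classical argument that extracts a finite subcover of the Yosida space. Concretely, I would fix $\delta$ with $0<\delta<\varepsilon$, and for each $t\in T$ form the archimedean $\ell$-ideal $J_t:=\ar{(t-\delta)^+}$. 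The strategy then has three steps: (i) show that these ideals cover, i.e. $\bigvee_{t\in T}J_t=A$ in $\arch(A)$; (ii) apply compactness of $\arch(A)$ to obtain $t_1,\dots,t_n\in T$ with $J_{t_1}\vee\cdots\vee J_{t_n}=A$; and (iii) translate this back to the inequality $0\le t_1\vee\cdots\vee t_n$ in $A$, so that $T_0=\{t_1,\dots,t_n\}$ works.

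Steps (ii) and (iii) should be routine given the appendix. For (iii), put $w=\bigvee T_0$. Since $\bigvee_i(t_i-\delta)^+=(w-\delta)^+$ by Remark~\ref{rem: l-ring properties}(\ref{l-ring: vee + a}), and the archimedean hull of finitely many nonnegative elements equals the hull of their join, step (ii) yields $\ar{(w-\delta)^+}=A$. By Lemma~\ref{lem: arch}(\ref{arch: 2}) there is $n\ge1$ with $1\le n(w-\delta)^+$, hence $1/n\le(w-\delta)\vee 0$, and then $1/n\le w-\delta$ by Lemma~\ref{rem: re meet sf}(\ref{re meet sf: 3}). Thus $w\ge\delta+1/n>0$, giving $0\le\bigvee T_0$ as required.

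The heart of the proof, and the main obstacle, is step (i), which I would prove by contradiction. Suppose $I:=\ar{\{(t-\delta)^+\mid t\in T\}}=\bigvee_t J_t$ is not all of $A$. Then $\lnot I$ is not the bottom of $B$, so the idempotent $x_{\lnot I}$ is nonzero by Theorem~\ref{thm: UMP}(\ref{UMP: 1}). The key is a \emph{support} computation: if $0\le c\in I$, then $\alpha(c)\,x_{\lnot I}=0$. Indeed, writing $\alpha(c)=\bigvee\{rx_{\lnot K}\mid r\ge0,\ (c-r)^-\in K\}$ by Lemma~\ref{lem: preservation of adding a scalar}(\ref{preservation of adding a scalar: 1}) and multiplying by $x_{\lnot I}$, each term becomes $rx_{\lnot(K\vee I)}$; for $r>0$ the memberships $c\in I$ and $(c-r)^-\in K$ force $0=\bar c\ge r>0$ in $A/(K\vee I)$, so that quotient is trivial and $x_{\lnot(K\vee I)}=0$. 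Applying this to $c=(t-\delta)^+\in I$, and using $t=(t\wedge\delta)+(t-\delta)^+$ together with $\alpha(t\wedge\delta)=\alpha(t)\wedge\delta\le\delta$ (Proposition~\ref{prop: monomorphism}), I get $\alpha(t)\,x_{\lnot I}=\alpha(t\wedge\delta)\,x_{\lnot I}\le\delta x_{\lnot I}$ for every $t\in T$. Since multiplication by the idempotent $x_{\lnot I}$ is order continuous (it is the band projection onto the direct summand $x_{\lnot I}\,D(\mathbb{R}[B])$), it distributes over the join, so $\left(\bigvee\alpha[T]\right)x_{\lnot I}=\bigvee_t\alpha(t)\,x_{\lnot I}\le\delta x_{\lnot I}$. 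On the other hand $\varepsilon\le\bigvee\alpha[T]$ gives $\varepsilon x_{\lnot I}\le\left(\bigvee\alpha[T]\right)x_{\lnot I}\le\delta x_{\lnot I}$, whence $(\varepsilon-\delta)x_{\lnot I}\le0$; as $\varepsilon-\delta>0$ and $x_{\lnot I}$ is a nonzero idempotent, this is impossible (cf. Remark~\ref{rem: idempotent facts}(\ref{idempotent facts: 1})). Hence $\bigvee_t J_t=A$. The delicate points I expect to require care are the order continuity of multiplication by $x_{\lnot I}$ applied to the possibly two-sided unbounded family $\alpha[T]$, and the bookkeeping in the support computation; both should be packaged cleanly using Lemmas~\ref{lem: joins in D}, \ref{lem: not A}, and~\ref{lem: I = A} from the appendix, with Lemma~\ref{lem: f below g} available should a comparison of elements of $D(\mathbb{R}[B])$ be needed.
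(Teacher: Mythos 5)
Your proposal is correct and its skeleton coincides with the paper's proof: reduce via Remark~\ref{rem: compact}(\ref{compact: join}), cover $A$ by archimedean hulls of positive parts, invoke compactness of $\arch(A)$ (Theorem~\ref{thm: arch frame}), and translate the finite subcover back into $0 \le \bigvee T_0$; your steps (ii) and (iii) are essentially the paper's final paragraph. Where you genuinely diverge is the covering step (i). The paper first replaces $T$ by the nonnegative family $T' = \{(a+\varepsilon)\vee 0 \mid a \in T\}$, so that every join in sight is a join of nonnegative elements and Remark~\ref{rem: l-ring properties}(\ref{l-ring: multiplication}) applies directly; it then derives the contradiction by a dichotomy on the coefficients $r$ in the resulting join (all $r \le 3\varepsilon/2$, versus some $r > 3\varepsilon/2$, the latter case killed by Lemma~\ref{lem: I = A}(\ref{I = A: 1})). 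Your annihilation argument --- $\alpha(c)x_{\lnot I} = 0$ for $0 \le c \in I$, combined with the identity $t = (t\wedge\delta) + (t-\delta)^+$ --- is a different and conceptually cleaner way to reach the same contradiction, and each of its individual claims checks out against the paper's toolkit (Lemma~\ref{lem: preservation of adding a scalar}(\ref{preservation of adding a scalar: 1}), Remark~\ref{rem: l-ring properties}(\ref{l-ring: multiplication}) for the nonnegative join defining $\alpha(c)$, Remark~\ref{rem: Specker facts}, and the unit argument in $A/(K \vee I)$).

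The one point that is not justified as written is the distribution $\left(\bigvee \alpha[T]\right)x_{\lnot I} = \bigvee_t \alpha(t)x_{\lnot I}$. Remark~\ref{rem: l-ring properties}(\ref{l-ring: multiplication}) requires \emph{both} families to consist of nonnegative elements, and $\alpha[T]$ need not; and none of the appendix lemmas you name (Lemmas~\ref{lem: joins in D}, \ref{lem: not A}, \ref{lem: I = A}, \ref{lem: f below g}) supplies order continuity of multiplication by an idempotent. The fact itself is true --- multiplication by an idempotent is a band projection, and band projections on Dedekind complete Riesz spaces preserve existing suprema --- so this is a citable external fact rather than an error, but it is imported from outside the paper. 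It can also be avoided entirely: since $D(\mathbb{R}[B])\in\bal$ is bounded, choose an integer $N$ with $\bigvee\alpha[T] \le N$; then for every $t \in T$ one has $\alpha(t) = \alpha(t)x_{\lnot I} + \alpha(t)x_I \le \delta x_{\lnot I} + N x_I$, a single upper bound independent of $t$, whence $\bigvee\alpha[T] \le \delta x_{\lnot I} + Nx_I$. Multiplying this one inequality by $x_{\lnot I}$ gives $\varepsilon x_{\lnot I} \le \delta x_{\lnot I}$, and Remark~\ref{rem: idempotent facts}(\ref{idempotent facts: 1}) yields $\varepsilon \le \delta$, the desired contradiction --- no infinite distributivity needed. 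With that repair (or with the band-projection citation made explicit), your proof is complete.
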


\begin{proof}
Let $0 < \varepsilon \in \mathbb{R}$ and $T \subseteq A$ with $\varepsilon \le \bigvee \alpha[T]$. By Remark~\ref{rem: compact}(\ref{compact: join}), it suffices to show that there is a finite $T_0 \subseteq T$ with $\bigvee T_0 \ge 0$. Set $T' = \{ (a + \varepsilon) \vee 0 \mid a \in T\}$. Since $\alpha((a+\varepsilon)\vee 0) \ge \alpha(a + \varepsilon) = \alpha(a) + \varepsilon$, we have $2\varepsilon \le \bigvee \alpha[T']$ by Remark~\ref{rem: l-ring properties}(\ref{l-ring: vee + a}).
As $0 \le b$ for each $b \in T'$, Lemma~\ref{lem: preservation of adding a scalar}(\ref{preservation of adding a scalar: 1}) implies
\[
2\varepsilon \le \bigvee \{ rx_{\lnot I} \mid (b - r)^- \in I, b \in T' \}.
\]
We next consider the archimedean $\ell$-ideal $L = \bigvee \{ \ar{(b - \varepsilon)^+} \mid b \in T'\}$ and show that $L = A$. If not, then $x_{\lnot L} \ne 0$, so using Remarks~\ref{rem: Specker facts}(\ref{Specker facts: 1}) and \ref{rem: l-ring properties}(\ref{l-ring: multiplication}), we have
\begin{align} \label{eqn}
\begin{split}
2\varepsilon x_{\lnot L} &\le x_{\lnot L} \cdot \bigvee \{ rx_{\lnot I} \mid (b - r)^- \in I, b \in T' \} \\
&= \bigvee \{ rx_{\lnot L} x_{\lnot I} \mid (b - r)^- \in I, b \in T' \} \\
&= \bigvee \{ rx_{\lnot (I \vee L)}\mid (b - r)^- \in I, b \in T' \}.
\end{split}
\end{align}
Observe that if $I \vee L = A$, then $x_{\lnot(I \vee L)} = 0$. Suppose that $r \le 3\varepsilon/2$ for all $r, b, I$ in the join above with $I \vee L \ne A$. Because $x_{\lnot (I \vee L)} \le x_{\lnot L}$ for each $I$, the join above is then bounded by $(3\varepsilon/2)x_{\lnot L}$, a contradiction to the inequality (\ref{eqn}). Therefore, there are $r, b, I$ in the join above with  $I \vee L \ne A$ and $r > 3\varepsilon/2$. Since $(b-3\varepsilon/2)^- \le (b-r)^-$, we have $(b-3\varepsilon/2)^- \in I \subseteq I \vee L$, and $(b-\varepsilon)^+ \in L \subseteq I \vee L$ by definition of $L$. Therefore, $I \vee L = A$ by Lemma~\ref{lem: I = A}(\ref{I = A: 1}). This contradiction yields $L = A$. 

Since $\arch(A)$ is a compact frame (see Theorem~\ref{thm: arch frame}), there are $b_1, \dots, b_n \in T'$ with $\ar{(b_1 - \varepsilon)^+} \vee \cdots \vee \ar{(b_n - \varepsilon)^+} = A$. For each $i$ there is $a_i \in T$ with $b_i = (a_i + \varepsilon) \vee 0$. Then, using Remark~\ref{rem: l-ring properties}(\ref{l-ring: vee + a}), we have
\[
(b_i - \varepsilon)^+ =  [((a_i + \varepsilon) \vee 0) - \varepsilon] \vee 0 = (a_i \vee -\varepsilon) \vee 0 = a_i \vee 0 = a_i^+
\]
for each $i$. Therefore, $\ar{a_1^+} \vee \cdots \vee \ar{a_n^+} = A$. Set $c = a_1 \vee \cdots \vee a_n$. Then $a_i^+ \le c^+$ for each $i$, so $\ar{c^+} = A$. Thus, there is $m \ge 1$ with $1 \le mc^+$, and hence $1/m \le c^+$. By Lemma~\ref{rem: re meet sf}(\ref{re meet sf: 3}), $1/m \le c$ which yields $0 \le 1/m \le a_1 \vee \cdots \vee a_n$. This shows that $\alpha$ is compact.
\end{proof}

Propositions~\ref{prop: monomorphism},~\ref{prop: dense}, and \ref{prop: compact} yield our main result.

\begin{theorem} \label{thm: main}
For each $A\in\bal$, the pair $(D(\mathbb{R}[B]),\alpha)$ is a canonical extension of $A$.
\end{theorem}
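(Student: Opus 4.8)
The plan is to check directly the three clauses in the definition of a canonical extension in $\bal$, namely that $D(\mathbb{R}[B])$ is a Dedekind $\bal$-algebra and that $\alpha$ is a $\bal$-monomorphism which is both dense and compact. The first clause is immediate: by construction $D(\mathbb{R}[B])$ is the Dedekind completion of the Specker $\bal$-algebra $\mathbb{R}[B]$, hence a Dedekind $\bal$-algebra by the discussion of Dedekind completions in Section~\ref{sec: bal}. Thus the codomain of $\alpha$ is of the required type, and it remains only to verify the properties of $\alpha$ itself.

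Here the work has already been organized into three separate statements, so the proof reduces to citing them in turn. First I would appeal to Proposition~\ref{prop: monomorphism} to conclude that $\alpha$ is a $\bal$-monomorphism. Next, Proposition~\ref{prop: dense} gives that $\alpha$ is dense, i.e.\ every element of $D(\mathbb{R}[B])$ is a join of meets from $\alpha[A]$. Finally, Proposition~\ref{prop: compact} provides compactness in the sense of the definition. Taken together, these show that $(D(\mathbb{R}[B]),\alpha)$ is dense and compact, and therefore a canonical extension of $A$.

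The genuine difficulty does not lie in this assembly but in the three propositions and the supporting results in the appendix. The most delicate step is verifying that $\alpha$ is a $\bal$-morphism: the inequalities $\alpha(a)+\alpha(b)\le\alpha(a+b)$ and $\alpha(a)\alpha(b)\le\alpha(ab)$ follow from monotonicity of $\alpha$ together with the identities in Remark~\ref{rem: l-ring properties}, but the reverse inequalities require the comparison criterion of Lemma~\ref{lem: f below g} and careful tracking of the suprema $\sup\{r \mid rx_{\lnot I}\le\alpha(c)\}$ and of the archimedean hulls that must be shown not to equal $A$. Density similarly rests on the fact that each $x_{\lnot I}$ is a meet from $\alpha[A]$, and compactness ultimately relies on the compactness of the frame $\arch(A)$ from Theorem~\ref{thm: arch frame}. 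Once all three propositions are established, the theorem itself follows with no additional argument.
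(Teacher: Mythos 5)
Your proof is correct and matches the paper exactly: the paper derives Theorem~\ref{thm: main} by simply combining Propositions~\ref{prop: monomorphism}, \ref{prop: dense}, and \ref{prop: compact}, just as you do. Your additional observation that $D(\mathbb{R}[B])$ is a Dedekind $\bal$-algebra by construction is implicit in the paper but harmless to make explicit.
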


\section{Canonical extensions and normal functions} \label{sec: normal}

In the previous section we gave a point-free description of a canonical extension of $A\in\bal$ as the pair $(D(\mathbb{R}[B]),\alpha)$. In this section we show that $D(\mathbb{R}[B])$ can be described as the algebra $N(X)$ of (bounded) normal real-valued functions on the space $X$ of proper archimedean $\ell$-ideals of $A$. The idempotents of $N(X)$ are exactly the characteristic functions of regular opens of $X$. Thus, we obtain a generalization of a result of \cite{BH20} that a canonical extension of a boolean algebra $B$ is isomorphic to the boolean algebra of regular open subsets of the space of proper filters of $B$. Assuming (AC), we show that $N(X)$ is isomorphic to the algebra of bounded real-valued functions on the Yosida space of $A$, thus obtaining a result of \cite{BMO18c}. We conclude the paper by drawing a connection between our results and those in point-free topology describing normal functions on an arbitrary frame  \cite{GKP09,GP11,GP14,GMP16}.

For a topological space $X$, we recall that $B(X)$ is the set of all bounded real-valued functions on $X$. It is straightforward to see that under pointwise operations $B(X) \in \dbal$. Recall that $f \in B(X)$ is 
\emph{lower semicontinuous} if $f^{-1}(r, \infty)$ is open, and $f$ is \emph{upper semicontinuous} if $f^{-1}(-\infty, r)$ is open for each $r \in \mathbb{R}$ (see, e.g., \cite[p.~361]{Bou89}). 
For each $x \in X$, let ${\mathcal N}_x$ be the collection of open neighborhoods of $x$. For each $f \in B(X)$ 
define
\begin{align*}
f_*(x) &= \sup \{ \inf f[U] \mid U \in \mathcal{N}_x \} \\
f^*(x) &= \inf \{ \sup f[U] \mid U \in \mathcal{N}_x \}.
\end{align*}
It is well known that $f$ is lower semicontinuous iff $f = f_*$ and $f$ is upper semicontinuous iff $f = f^*$ (see, e.g., \cite[p.~360-362]{Bou89}). 

Since we will be interested in the poset and the corresponding Alexandroff space of proper archimedean $\ell$-ideals, we will utilize the following lemma. 

\begin{lemma} \label{lem: order preserving}
Let $X$ be an Alexandroff space and $f \in B(X)$. 
\begin{enumerate}
\item \label{order preserving: 1} $f$ is lower semicontinuous iff $f$ is order preserving.
\item \label{order preserving: 2} $f$ is upper semicontinuous iff $f$ is order reversing.
\end{enumerate}
\end{lemma}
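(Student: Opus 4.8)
The statement concerns an Alexandroff space $X$, where the open sets are exactly the upsets of the underlying poset. My plan is to prove the two equivalences in parallel, since (\ref{order preserving: 2}) follows from (\ref{order preserving: 1}) by applying the first part to $-f$: indeed $f$ is upper semicontinuous iff $-f$ is lower semicontinuous (because $(-f)^{-1}(r,\infty) = f^{-1}(-\infty,-r)$), and $f$ is order reversing iff $-f$ is order preserving. So the real content is entirely in part (\ref{order preserving: 1}), and I would prove that first and then deduce (\ref{order preserving: 2}) in one line.

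\textbf{Proof of (\ref{order preserving: 1}).} First I would unwind the definition of lower semicontinuity: $f$ is lower semicontinuous iff $f^{-1}(r,\infty)$ is open for each $r \in \mathbb{R}$, which in the Alexandroff space means $f^{-1}(r,\infty)$ is an upset of $X$. For the forward direction, suppose $f$ is lower semicontinuous and let $x \le y$ in $X$; I must show $f(x) \le f(y)$. If not, then $f(x) > f(y)$, so pick a rational (or real) $r$ with $f(y) < r < f(x)$. Then $x \in f^{-1}(r,\infty)$, and since this set is an upset and $x \le y$, we get $y \in f^{-1}(r,\infty)$, i.e.\ $f(y) > r$, a contradiction. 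Hence $f$ is order preserving. For the converse, suppose $f$ is order preserving and fix $r \in \mathbb{R}$. To see $f^{-1}(r,\infty)$ is an upset, take $x \in f^{-1}(r,\infty)$ and $y \ge x$; then $f(y) \ge f(x) > r$, so $y \in f^{-1}(r,\infty)$. Thus the preimage is an upset, hence open, and $f$ is lower semicontinuous.

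\textbf{Deducing (\ref{order preserving: 2}).} Apply part (\ref{order preserving: 1}) to $-f$. We have that $f$ is upper semicontinuous iff $-f$ is lower semicontinuous iff $-f$ is order preserving iff $f$ is order reversing, which is the desired equivalence.

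\textbf{Main obstacle.} Honestly, this lemma is routine once one observes that opens in an Alexandroff space are precisely the upsets; the only point requiring a little care is the choice of the separating value $r$ in the forward direction of (\ref{order preserving: 1}), and the bookkeeping needed to reduce (\ref{order preserving: 2}) to (\ref{order preserving: 1}) via the substitution $f \mapsto -f$. No deep structural input about $\bal$-algebras or archimedean $\ell$-ideals is needed here; the lemma is purely topological and will be applied later to the specific Alexandroff space of proper archimedean $\ell$-ideals.
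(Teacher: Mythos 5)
Your proof is correct and follows essentially the same approach as the paper: both directions of part (\ref{order preserving: 1}) rest on the observation that opens in an Alexandroff space are exactly the upsets, exactly as in the paper's argument. The only cosmetic difference is that you deduce part (\ref{order preserving: 2}) from part (\ref{order preserving: 1}) via the substitution $f \mapsto -f$, whereas the paper simply notes that (\ref{order preserving: 2}) is proved similarly; both handle it correctly.
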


\begin{proof}
We only prove (\ref{order preserving: 1}) as (\ref{order preserving: 2}) is proved similarly. First suppose that $f$ is order preserving. Let $s \in \mathbb{R}$. Since $(s, \infty)$ is an upset in $\mathbb{R}$ and $f$ is order preserving, $f^{-1}(s, \infty)$ is an upset in $X$. Therefore, $f^{-1}(s, \infty)$ is open in $X$. Thus, $f$ is lower semicontinuous. 

Conversely, suppose that $f$ is lower semicontinuous. Let $x,y \in X$ with $x \le y$. If $f(x) = s$, then for each $r < s$, we have $x \in f^{-1}(r,\infty)$, which is an open subset of $X$ since $f$ is lower semicontinuous. Therefore, $y \in f^{-1}(r, \infty)$. Thus, for each $r \in \mathbb R$ we have $r < f(x)$ implies $r < f(y)$. This forces $f(x) \le f(y)$, and hence $f$ is order preserving.
\end{proof}

\begin{remark}
It is well known that a map between Alexandroff spaces is continuous iff it is order preserving. Therefore, $f \in B(X)$ is lower semicontinuous iff $f$ is continuous with respect to the Alexandroff topology on $\mathbb R$, and $f$ is upper semicontinous iff $f$ is continuous with respect to the topology of downsets of $\mathbb{R}$.
\end{remark}

The following definition is motivated by Dilworth \cite{Dil50}.

\begin{definition} 
Let $X$ be a topological space and $f \in B(X)$. We call $f^\# := (f^*)_*$ the \emph{normalization} of $f$ and we call $f$ \emph{normal} if $f = f^\#$. Let 
\[
N(X) = \{ f \in B(X) \mid f = f^\# \}.
\]
\end{definition}

\begin{theorem} \label{thm: danet}
$N(X) \in \dbal$ and the operations on $N(X)$ are normalizations of the corresponding operations on $B(X)$.
\end{theorem}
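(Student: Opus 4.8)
The plan is to establish Theorem~\ref{thm: danet} by first understanding the structure of $N(X)$ as a subset of the Dedekind complete $\bal$-algebra $B(X)$, and then showing that the normalization map $f \mapsto f^\#$ is a well-behaved idempotent projection onto $N(X)$ that transports the Dedekind $\bal$-algebra structure. The central technical fact I would isolate at the outset is that $f^\#$ is a \emph{normal} function, i.e. that normalization is idempotent: $(f^\#)^\# = f^\#$. This amounts to showing $((f^*)_*{}^*)_* = (f^*)_*$. The key lemma driving this is the classical interplay between the lower and upper envelopes: one shows $f_* \le f \le f^*$ always, that $(-f)^* = -(f_*)$ and $(-f)_* = -(f^*)$, that $f_*$ is the largest lower semicontinuous function below $f$ while $f^*$ is the smallest upper semicontinuous function above $f$, and crucially that $((g_*)^*)_* = g_*$ for lower semicontinuous $g$ (and dually). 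These envelope identities are standard and I would cite \cite{Dil50} and \cite{Bou89} rather than reprove them; combining them yields idempotency of $\#$ and hence that $N(X)$ is exactly the image of the normalization operator.

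Next I would verify that $N(X)$ is closed under the $\bal$-operations after normalizing, i.e. that the operations on $N(X)$ are the normalizations of those on $B(X)$. For the lattice operations I would use that a meet of two normal functions computed in $N(X)$ is $(f \wedge g)^\#$ where $f \wedge g$ is the pointwise meet in $B(X)$, and similarly for join, addition, and multiplication; the scalar multiplication and the constant $1$ are already normal. The verification that these normalized operations satisfy the $\ell$-algebra axioms and that $1$ remains a strong order unit follows because $\#$ preserves order (monotonicity of both $(-)_*$ and $(-)^*$) and fixes the constants, so the order on $N(X)$ is the restriction of the pointwise order and all inequalities transfer. Boundedness and the archimedean property are inherited from $B(X) \in \dbal$, since $N(X)$ sits inside $B(X)$ with the same pointwise order and the constants agree.

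The part requiring the most care is Dedekind completeness of $N(X)$. Here the natural move is: given a subset $\mathcal{F} \subseteq N(X)$ bounded above in $N(X)$, form the pointwise supremum $g = \bigvee \mathcal{F}$ in $B(X)$ (which exists since $B(X) \in \dbal$), and show that $g^\#$ is the least upper bound of $\mathcal{F}$ in $N(X)$. One checks $g^\#$ is an upper bound using monotonicity of $\#$ together with $f = f^\# \le g^\#$ for each $f \in \mathcal{F}$, and that it is least by showing any normal upper bound $h$ satisfies $g \le h$ pointwise, hence $g^\# \le h^\# = h$. This is where I expect the main obstacle: the supremum in $N(X)$ is \emph{not} the pointwise supremum but its normalization, so one must carefully track where normalization is applied and invoke the precise envelope identities to guarantee $g^\#$ is genuinely normal and genuinely the join. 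The statement that ``the operations on $N(X)$ are normalizations of the corresponding operations on $B(X)$'' is exactly the bookkeeping that makes this coherent across all operations simultaneously, so I would phrase the completeness argument in a way that reuses the monotonicity and idempotency of $\#$ established in the first paragraph rather than re-deriving envelope facts for infinite families.
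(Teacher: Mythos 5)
Your plan takes a genuinely different route from the paper: the paper proves this theorem essentially by citation, invoking \cite{Dil50} for the Dedekind complete lattice structure (bounded joins and meets being normalizations of pointwise ones), \cite{Dan15} for the lattice-ordered vector space structure, and \cite[Sec.~8]{BMO16} for the multiplication, and then only checks that the standing hypotheses on $X$ in those references (complete regularity, resp.\ compactness) are never used. You instead attempt a self-contained verification, and while its skeleton is reasonable, it has two genuine gaps. The first is that your ``crucial'' envelope identity is false as stated: you claim $((g_*)^*)_* = g_*$ \emph{for lower semicontinuous} $g$. For lsc $g$ one has $g_* = g$, so this asserts $(g^*)_* = g$, i.e.\ that every lsc function is normal; taking $g = \chi_U$ with $U$ open but not regular open gives $(g^*)_* = \chi_{\Int\,\Cl(U)} \ne g$. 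The identity you actually need holds for \emph{upper} semicontinuous $v$: since $v_* \le (v_*)^* \le v^* = v$, applying $(-)_*$ gives $((v_*)^*)_* = v_*$, and putting $v = f^*$ yields $(f^\#)^\# = f^\#$. So idempotency of $\#$ is true, but only after correcting the hypothesis in your lemma.

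The second gap is more serious: the verification that the normalized operations satisfy the $\ell$-algebra axioms is the real content of the theorem, and your justification --- that $\#$ is monotone and fixes constants, ``so all inequalities transfer'' --- does not address it. The axioms in question (associativity of normalized addition, distributivity of the normalized product over it, existence of additive inverses, compatibility of join with addition) are \emph{equations}, not inequalities, and they involve nested normalizations. Note that even negation in $N(X)$ is not pointwise: for normal $f$ one computes $(-f)^\# = -(f^*)$, which equals $-f$ only when $f$ is continuous, so $N(X)$ is not closed under the pointwise operations and every axiom must be checked with normalizations inserted. Proving, say, $\bigl((f+g)^\# + h\bigr)^\# = \bigl(f+(g+h)^\#\bigr)^\#$ requires lemmas of the shape $(u^\# + v)^\# = (u+v)^\#$, whose proofs rest on the interplay of envelopes with addition (e.g.\ $(u+v)^* \le u^* + v^*$, $(u+v)_* \ge u_* + v_*$) and constitute precisely the technical work carried out in \cite{Dan15} and \cite[Sec.~8]{BMO16}; monotonicity and idempotency of $\#$ alone do not yield them. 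Your Dedekind completeness argument in the last paragraph is correct and matches Dilworth's, but the middle step of your plan collapses without these additional lemmas: either prove them explicitly or do as the paper does and cite \cite{Dil50,Dan15,BMO16}, observing that their arguments work over an arbitrary topological space.
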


\begin{proof}
It follows from \cite{Dil50} that $N(X)$ is a Dedekind complete lattice where bounded joins and meets are normalizations of pointwise bounded joins and meets. By \cite{Dan15}, $N(X)$ is a lattice-ordered vector space, where addition and scalar multiplication are normalizations of pointwise addition and scalar multiplication. Finally, by \cite[Sec.~8]{BMO16}, $N(X)\in\bal$, where multiplication is the normalization of pointwise multiplication. Thus, $N(X) \in \dbal$. We point out that in \cite{Dil50,Dan15} $X$ is assumed to be completely regular and in \cite{BMO16} compact Hausdorff, but the same proofs work for an arbitrary topological space.
\end{proof}

We next show that idempotents of $N(X)$ correspond to regular opens of $X$.

\begin{lemma}\label{lem: idempotents}
For a topological space $X$, the idempotents of $N(X)$ are precisely the characteristic functions $\chi_U$ for $U$ a regular open subset of $X$. Consequently, $\Id(N(X)) \cong \RO(X)$.
\end{lemma}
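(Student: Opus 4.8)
The plan is to exhibit the assignment $U \mapsto \chi_U$ as a bijection between the regular open subsets of $X$ and the idempotents of $N(X)$, and then to check it is an order-isomorphism, from which the boolean isomorphism $\RO(X) \cong \Id(N(X))$ follows. The computational backbone is the identity
\[
(\chi_U)^\# = \chi_{\Int(\Cl(U))},
\]
valid for an \emph{arbitrary} subset $U \subseteq X$. I would prove it in two steps directly from the definitions: first $(\chi_U)^* = \chi_{\Cl(U)}$, since $\sup \chi_U[V] = 1$ exactly when $V$ meets $U$, so $(\chi_U)^*(x) = 1$ iff every neighborhood of $x$ meets $U$; and then $(\chi_{\Cl(U)})_* = \chi_{\Int(\Cl(U))}$, since $\inf \chi_{\Cl(U)}[V] = 1$ exactly when $V \subseteq \Cl(U)$. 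Granting this, $\chi_U$ is normal iff $U = \Int(\Cl(U))$, i.e.\ iff $U$ is regular open; and when $U$ is regular open, $\chi_U$ is an idempotent of $N(X)$ because $\chi_U \cdot \chi_U = \chi_U$ pointwise and multiplication in $N(X)$ is the normalization of pointwise multiplication (Theorem~\ref{thm: danet}), so $\chi_U \cdot_{N(X)} \chi_U = (\chi_U)^\# = \chi_U$. This settles the direction ``regular open $\Rightarrow$ idempotent''.

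For the converse I would take an arbitrary idempotent $e \in \Id(N(X))$ and show it is $\{0,1\}$-valued. Every idempotent of a $\bal$-algebra satisfies $0 \le e \le 1$, and since the order on normal functions is pointwise, $0 \le e(x) \le 1$ for all $x$. The key observation is that, because $e \ge 0$ and $t \mapsto t^2$ is increasing on $[0,\infty)$, the upper and lower regularizations commute with squaring: for any nonnegative $g$ one has $(g^2)^* = (g^*)^2$ and $(g^2)_* = (g_*)^2$ pointwise (each reduces to the $\sup$ or $\inf$ of squares being the square of the $\sup$ or $\inf$). Applying this twice,
\[
(e^2)^\# = \bigl((e^2)^*\bigr)_* = \bigl((e^*)^2\bigr)_* = \bigl((e^*)_*\bigr)^2 = (e^\#)^2 = e^2,
\]
the last equality using normality $e^\# = e$. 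On the other hand, $e$ being idempotent in $N(X)$ means $(e^2)^\# = e$. Comparing gives $e = e^2$ pointwise, so $e(x) \in \{0,1\}$ for every $x$, whence $e = \chi_U$ with $U = e^{-1}(\{1\})$. Normality of $\chi_U$ together with the backbone identity then forces $U = \Int(\Cl(U))$, so $U$ is regular open.

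Finally I would assemble the isomorphism. The map $U \mapsto \chi_U$ is injective on sets and, by the two directions above, is a bijection $\RO(X) \to \Id(N(X))$. It is an order-isomorphism: if $U \subseteq V$ then $\chi_U \le \chi_V$ pointwise, hence in $N(X)$; conversely $\chi_U \le_{N(X)} \chi_V$ gives $\chi_U = (\chi_U \wedge_{\mathrm{ptwise}} \chi_V)^\# = \chi_{\Int(\Cl(U \cap V))}$, so $U = \Int(\Cl(U\cap V)) \subseteq \Int(\Cl(V)) = V$. An order-isomorphism between boolean algebras is a boolean isomorphism, yielding $\Id(N(X)) \cong \RO(X)$. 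The main obstacle is the converse direction, and specifically the squaring computation: the argument that an idempotent must be $\{0,1\}$-valued hinges on noticing that nonnegativity of $e$ lets the normalization pass through the square, collapsing the two relations $(e^2)^\# = e$ and $e^\# = e$ to the single pointwise identity $e = e^2$. Verifying that the $N(X)$-order restricts to the pointwise order on normal functions (needed both to read off $0 \le e \le 1$ and to run the order-isomorphism step) is a small but necessary preliminary.
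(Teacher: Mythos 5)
Your proof is correct, and its overall skeleton matches the paper's: both reduce the lemma to (i) every idempotent of $N(X)$ is $\{0,1\}$-valued, (ii) $(\chi_U)^\# = \chi_{\Int\,\Cl(U)}$, so a characteristic function is normal iff the underlying set is regular open, and (iii) $U \mapsto \chi_U$ is an order-preserving and order-reflecting bijection, hence a boolean isomorphism. Where you genuinely diverge is step (i). The paper derives two-valuedness from the $\ell$-algebra identity $e = 2e \wedge 1$, valid for any idempotent because $(2e \wedge 1) - e = e \wedge (1-e) = 0$, and then invokes the fact, cited from \cite[Thm.~5.1]{Dan15}, that binary meet and positive scalar multiplication in $N(X)$ are computed pointwise; this turns the identity into $e(x) = \min(2e(x),1)$ for every $x$, which forces $e(x) \in \{0,1\}$. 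You instead prove a commutation lemma: upper and lower regularization commute with squaring of nonnegative functions, so the pointwise square $e^2$ of a nonnegative normal function is again normal; idempotency $(e^2)^\# = e$ then collapses to $e = e^2$ pointwise. Your route is self-contained modulo Theorem~\ref{thm: danet} (it needs only that multiplication in $N(X)$ is the normalization of the pointwise product), whereas the paper's is slicker but leans on the external fact that meets in $N(X)$ are pointwise. Note that both arguments need the order on $N(X)$ to be the pointwise order --- you use it to read off $0 \le e(x) \le 1$ and again in the order-isomorphism step, and you rightly flag it as a preliminary; it follows from Theorem~\ref{thm: danet} together with monotonicity of normalization, since $f \le_{N(X)} g$ means $(f \wedge g)^\# = f$ with $\wedge$ the pointwise meet, whence $f = (f \wedge g)^\# \le g^\# = g$ pointwise. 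With that preliminary filled in, all your steps check out.
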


\begin{proof}
Let $e \in N(X)$ be an idempotent. Then $e = 2e \wedge 1$ since $(2e \wedge 1) - e = e \wedge (1-e) = 0$. Since positive scalar multiplication and meet in $N(X)$ are pointwise (see, e.g., \cite[Thm.~5.1]{Dan15}), the equation $e = 2e \wedge 1$ yields that $e(x) \in \{0, 1\}$ for each $x \in X$. Therefore, $e$ is a characteristic function.
If $e = \chi_U$ for $U \subseteq X$, then 
it is straightforward to see that $e^* = \chi_{\Cl(U)}$ and $e_* = \chi_{\Int(U)}$. Thus, $e^\# = \chi_{\Int\,\Cl(U)}$, and so $e = e^\#$ iff $U$ is regular open in $X$. It is then straightforward to check that the map $U \mapsto \chi_U$ is an order preserving and order reflecting bijection, and hence a boolean isomorphism between $\RO(X)$ and $\Id(N(X))$.
\end{proof}

For a poset $X$ and $S \subseteq X$, we use the standard notation
\begin{align*}
\up S &= \{ x \in S \mid s \le x \textrm{ for some } s \in S\} \\
\down S &= \{ x \in S \mid x \le s \textrm{ for some } s \in S\}.
\end{align*} 
If $S = \{x\}$ is a singleton, we write $\up x$ for $\up S$ and $\down x$ for $\down S$. The closure and interior operators of the Alexandroff topology on $X$ are given by
\[
\Cl(S) = \down S \textrm{\ \ and\  \ }\Int(S) = \{ x \in X \mid \up x \subseteq S\}.
\]

\begin{lemma}\label{lem: more idempotents}
Let $A\in\bal$, $X= \arch(A) \setminus \{ A \}$, and $I \in X$. Then $\up I$ and $U_I := \{ J \in X \mid J \vee I = A\}$ are regular open subsets of $X$, and $U_I$ is the complement of $\up I$ in $\RO(X)$.
\end{lemma}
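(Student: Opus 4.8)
The plan is to work entirely with the Alexandroff topology on $X = \arch(A)\setminus\{A\}$, using the interior and closure formulas $\Cl(S) = \down S$ and $\Int(S) = \{J \mid \up J \subseteq S\}$ recorded just above the statement. First I would observe that both $\up I$ and $U_I$ are upsets of $X$, hence open: for $\up I$ this is immediate, and for $U_I$ note that if $J \vee I = A$ and $J \subseteq J'$ in $X$, then $J' \vee I \supseteq J \vee I = A$, so $J' \in U_I$.

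The heart of the argument is to show that $\up I$ is regular open, i.e.\ $\Int(\Cl(\up I)) = \up I$. I would begin by computing the closure $\Cl(\up I) = \down\up I = \{J \in X \mid I \vee J \ne A\}$; the point is that a proper archimedean ideal $K$ lies above both $I$ and $J$ iff $I \vee J \subseteq K$, which can happen with $K \ne A$ exactly when $I \vee J \ne A$. The inclusion $\up I \subseteq \Int\Cl\,\up I$ holds automatically because $\up I$ is open. For the reverse inclusion, suppose $J \in \Int\Cl\,\up I$ but $I \not\subseteq J$. Here I would invoke the separation property of archimedean $\ell$-ideals (Lemma~\ref{lem: arch}): there is $K \ne A$ with $J \subseteq K$ and $K + I = A$, whence $K \vee I = A$, so $K \notin \Cl\,\up I$. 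But $K \in \up J \subseteq \Cl\,\up I$, a contradiction. Thus $\Int\Cl\,\up I = \up I$ and $\up I$ is regular open.

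Next I would identify $U_I$ with the Boolean complement of $\up I$ in $\RO(X)$, namely the pseudocomplement $\Int(X \setminus \up I)$ of the open set $\up I$. Since $X \setminus \up I = \{J \in X \mid I \not\subseteq J\}$, I have $\Int(X\setminus\up I) = \{J \mid \forall K \supseteq J \text{ in } X,\ I \not\subseteq K\}$. A direct check, which does \emph{not} require the separation lemma, shows this coincides with $U_I$: if $J \vee I = A$ then no $K \supseteq J$ in $X$ can contain $I$ (otherwise $A = J \vee I \subseteq K$, forcing $K = A$), while conversely if $J \vee I \ne A$ then $K := J \vee I$ witnesses $J \notin \Int(X\setminus\up I)$. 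Because the Boolean complement in $\RO(X)$ of a regular open set is again regular open, this simultaneously establishes that $U_I$ is regular open and that it is the complement of $\up I$, finishing the proof.

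The main obstacle will be the regular-openness of $\up I$, and specifically the reverse inclusion $\Int\Cl\,\up I \subseteq \up I$: this is precisely where the purely order-theoretic argument would fail for a general poset, and where the normality/separation property of archimedean $\ell$-ideals (Lemma~\ref{lem: arch}) is indispensable. By contrast, the identification $U_I = \Int(X\setminus\up I)$ is a routine lattice-theoretic computation in the frame $\arch(A)$ together with the Alexandroff interior formula, and the passage from there to both remaining claims is formal once regular-openness of $\up I$ is in hand.
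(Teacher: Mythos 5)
Your proof is correct, and while your treatment of $\up I$ coincides with the paper's, your treatment of $U_I$ takes a genuinely different and more economical route. One correction on sourcing: the separation property you invoke --- given $I \not\subseteq J$ there is $K \ne A$ with $J \subseteq K$ and $K + I = A$ --- is Lemma~\ref{lem: I = A}(\ref{I = A: 2}) in the paper, not Lemma~\ref{lem: arch}; with that relabeling, your argument that $\Int\,\Cl(\up I) \subseteq \up I$ is exactly the paper's. Where you diverge is in what follows. The paper proves that $U_I$ is regular open by a second direct argument (if $K := J \vee I \ne A$, then no proper $L \supseteq K$ satisfies $L \vee I = A$, so $\up J \not\subseteq \Cl(U_I)$), and then proves complementation by showing that every $V \in \RO(X)$ disjoint from $\up I$ is contained in $U_I$ --- a step that appeals to Lemma~\ref{lem: I = A}(\ref{I = A: 2}) a \emph{second} time. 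You instead establish the identity $U_I = \Int(X \setminus \up I)$ by a purely order-theoretic computation, using only that $J \vee I$ is the least archimedean $\ell$-ideal containing $J$ and $I$, and then read off both remaining claims from the structure of $\RO(X)$ as the booleanization of the open-set frame: for open $U$ the pseudocomplement $\Int(X \setminus U)$ is regular open, and for $U \in \RO(X)$ it is the Boolean complement. (These facts are available in the paper's framework, since Section~\ref{sec: canonical extensions in BA} identifies $\RO(X)$ with $\B$ of the Alexandroff frame, where complement equals pseudocomplement.) The net effect is that you use the separation lemma once where the paper uses it twice, and the entire $U_I$ portion becomes formal; the paper's version, by contrast, stays entirely self-contained in the Alexandroff formulas $\Cl(S) = \down S$ and $\Int(S) = \{x \mid \up x \subseteq S\}$ and never needs to recognize $U_I$ as a pseudocomplement. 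Both arguments are valid; yours is shorter once the general facts about $\RO(X)$ are granted, while the paper's is more explicit at each step.
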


\begin{proof}
Since $\up I$ is an upset, hence open in $X$, the inclusion $\up I \subseteq \Int\,\Cl (\up I)$ is clear. For the reverse inclusion, suppose that $J \notin \up I$. By Lemma~\ref{lem: I = A}(\ref{I = A: 2}), there is $K \supseteq J$ with $K + I = A$. Therefore, $K \notin \down\up I$, so $\up J \not\subseteq \down\up I = \Cl(\up I)$, showing that $J \notin \Int\,\Cl(\up I)$.  Thus, $\up I \in \RO(X)$. 

Since $U_I$ is an upset, the inclusion $U_I \subseteq \Int\,\Cl(U_I)$ is clear. Suppose that $J \notin U_I$. Then $K := J \vee I \ne A$. If $K \subseteq L$ with $L \vee I = A$, then $L = A$ since $I \subseteq K \subseteq L$. Therefore, $\up K \not\subseteq \down U_I$, so $\up J \not\subseteq \Cl(U_I)$, and hence $J \not\in \Int\,\Cl(U_I)$. This shows $U_I = \Int\,\Cl(U_I)$, so $U_I \in \RO(X)$. 

Finally, to see that $U_I$ is the complement of $\up I$ in $\RO(X)$, it is clear that $\up I \cap U_I = \varnothing$. Let $V \in \RO(X)$ with $\up I \cap V = \varnothing$. If $J \in V$, then $I \not\subseteq J$. Therefore, by Lemma~\ref{lem: I = A}(\ref{I = A: 2}), there is $K \in X$ with $J \subseteq K$ and $K + I = A$, so $K \vee I = A$ as $K + I \subseteq K \vee I$. Thus, $K \in U_I$, and hence $J \in \down U_I$. This shows that $V \subseteq \down U_I$, so $V \subseteq \Int\,\Cl(U_I) = U_I$. Consequently, $U_I$ is the complement of $\up I$ in $\RO(X)$.
\end{proof}

From now on we will assume that $X$ is the set of proper archimedean $\ell$-ideals of $A\in\bal$ ordered by inclusion. The proof of the next theorem is choice-free.

\begin{theorem} \label{thm:normal}
There is a $\bal$-isomorphism $\varphi : D(\mathbb{R}[B]) \to N(X)$ such that $\varphi(x_I) = \chi_{U_I}$ for each $I \in X$.
\end{theorem}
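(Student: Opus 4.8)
The plan is to construct $\varphi$ using the universal mapping properties we have available, then verify it is a $\bal$-isomorphism by exhibiting an explicit compatible inverse on idempotents and extending via Dedekind completion. The target $N(X)$ is Dedekind complete by Theorem~\ref{thm: danet}, so the strategy mirrors the construction of $\theta : D(\mathbb{R}[B]) \to B(Y_A)$ sketched earlier in Section~\ref{sec: canonical extensions in bal point-free}, but now working choice-free over the Alexandroff space $X$ of proper archimedean $\ell$-ideals rather than the Yosida space.

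First I would build a boolean homomorphism out of $B$. By Lemma~\ref{lem: idempotents} we have $\Id(N(X)) \cong \RO(X)$, and by Lemma~\ref{lem: more idempotents} each $I \in X$ gives a regular open $U_I$ with $U_I$ the complement of $\up I$ in $\RO(X)$; equivalently $\chi_{U_I} = \chi_{\up I}^* = \lnot\chi_{\up I}$ in $\Id(N(X))$. The assignment $I \mapsto \chi_{U_I} = \lnot \chi_{\up I}$, i.e.\ $I \mapsto \chi_{\up I}$ thought of as landing in the boolean algebra $\RO(X)$, should define a bounded lattice homomorphism $\lambda : \arch(A) \to \RO(X)$: one checks $\lambda(A) = 1$, $\lambda(0) = 0$ (the top ideal maps to $\varnothing$ and the zero ideal to all of $X$, up to the reverse-inclusion bookkeeping), and that it preserves finite meets and joins, using that $\up(I \vee J) = \up I \cap \up J$ and the join formula for upsets. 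Since $\RO(X)$ is a boolean algebra, the universal mapping property of the free boolean extension $(B,i)$ yields a unique $\BA$-morphism $\tau : B \to \RO(X) \cong \Id(N(X))$ with $\tau \circ i = \lambda$, so that $\tau(x_I)$ corresponds to $\chi_{U_I}$. By Theorem~\ref{thm: UMP}(\ref{UMP: 2}) this lifts to a unique $\bal$-morphism $\sigma : \mathbb{R}[B] \to N(X)$ with $\sigma(x_I) = \chi_{U_I}$.

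Next I would extend $\sigma$ across the Dedekind completion. By Theorem~\ref{thm: Ded}, to obtain a $\bal$-isomorphism $\varphi : D(\mathbb{R}[B]) \to N(X)$ restricting to $\sigma$ it suffices to show that $\sigma$ is a $\bal$-monomorphism whose image is join-dense in $N(X)$. Join-density is the crux: I would show that every $0 \le f \in N(X)$ is the join (computed in $N(X)$) of the normal functions of the form $r\,\chi_{U_I}$ lying below it, paralleling the argument for $\zeta_A(a)$ given earlier, and then handle arbitrary $f$ by translation using Remark~\ref{rem: l-ring properties}(\ref{l-ring: vee + a}). Because $\Id(N(X)) \cong \RO(X)$ and the $\chi_{U_I}$ generate $\RO(X)$ as $I$ ranges over $X$ (every regular open is, via Lemma~\ref{lem: more idempotents} and the join formula, obtainable from the $U_I$), the Specker subalgebra $\sigma[\mathbb{R}[B]]$ contains enough characteristic functions and their real multiples to approximate every nonnegative normal function from below. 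Injectivity of $\sigma$ should follow from injectivity of $\tau$ (which holds because $\lambda$ is already injective on $\arch(A)$, as $\up I$ determines $I$) together with the faithfulness of the Specker construction in Theorem~\ref{thm: UMP}.

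I expect the main obstacle to be verifying join-density rigorously, since joins in $N(X)$ are \emph{normalizations} of pointwise joins (Theorem~\ref{thm: danet}), not pointwise joins themselves. Thus showing $f = \bigvee\{ r\chi_{U_I} \mid r\chi_{U_I} \le f\}$ requires controlling how normalization $(-)^\# = ((-)^*)_*$ interacts with the supremum, and one must confirm that the pointwise sup of the approving family $r\chi_{U_I}$, after normalization, recovers $f$ rather than merely a lower semicontinuous envelope of it. The precise correspondence between a proper archimedean $\ell$-ideal $I$ and the regular open $U_I$, plus the identity $\up I = U_I^c$ in $\RO(X)$ from Lemma~\ref{lem: more idempotents}, is exactly what lets one match the value $f(I)$ with suitable $r$, so I would lean heavily on that lemma and on Lemma~\ref{lem: order preserving} (semicontinuity $\Leftrightarrow$ monotonicity on Alexandroff spaces) to pin down which $r\chi_{U_I}$ sit below a given normal $f$. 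Once density and injectivity are in hand, Theorem~\ref{thm: Ded} delivers $\varphi$ together with the stated formula $\varphi(x_I) = \chi_{U_I}$ automatically.
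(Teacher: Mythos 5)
Your construction of $\sigma$ is exactly the paper's: the map $\lambda(I)=\chi_{U_I}$, the free-boolean-extension lift $\tau\colon B\to\Id(N(X))$, the Specker lift $\sigma\colon\mathbb{R}[B]\to N(X)$ with $\sigma(x_I)=\chi_{U_I}$, and finally Theorem~\ref{thm: Ded}. But your density step --- which you rightly identify as the crux --- aims at a false statement. You propose to show that every $0\le f\in N(X)$ is the join of the functions $r\chi_{U_I}$ below it. The family that works consists of the \emph{complementary} idempotents $\chi_{\up I}=\sigma(x_{\lnot I})$ (by Lemma~\ref{lem: more idempotents}, $U_I$ is the complement of $\up I$ in $\RO(X)$): these are the analogues of the characteristic functions $r\chi_C$ of \emph{closed} sets used in the paper's model computation for $\zeta_A(a)$, whereas your $\chi_{U_I}$ are the analogues of characteristic functions of open sets, and suprema of multiples of open characteristic functions do not recover arbitrary functions from below. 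Concretely (using (AC) only to make the example transparent), let $A=C([0,1])$ and let $M$ be the maximal $\ell$-ideal of functions vanishing at $1/2$; then $\up M=\{M\}$ is regular open, so $f=\chi_{\up M}\in N(X)$ by Lemmas~\ref{lem: idempotents} and~\ref{lem: more idempotents}. If $r\chi_{U_I}\le f$ with $r>0$, then $U_I\subseteq\{M\}$; but any nonempty $U_I$ forces $Z_\ell(I)\ne[0,1]$, so the open set $[0,1]\setminus Z_\ell(I)$ contains a point $p\ne 1/2$, and the maximal ideal at $p$ lies in $U_I$. Hence $U_I=\varnothing$, the pointwise supremum of your family is $0$, its normalization is $0$, and $f$ is not recovered.

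The repair is the paper's argument, and it also dissolves the normalization worry that you raise but cannot resolve with your family: for $0\le f\in N(X)$ and $I\in X$ put $r=f(I)$; since $f$ is normal it is lower semicontinuous, hence order preserving by Lemma~\ref{lem: order preserving}, so $f\ge r$ on $\up I$, and since $f\ge 0$ this gives $r\chi_{\up I}\le f$ pointwise with equality at the point $I$. Thus the \emph{pointwise} supremum of $\{r\chi_{\up I}\mid r\in\mathbb{R},\ r\chi_{\up I}\le f\}$ is already equal to $f$, so the join in $N(X)$, being the normalization of the pointwise supremum (Theorem~\ref{thm: danet}), is $f^{\#}=f$; no envelope issue arises. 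Two further points. First, your verification that $\lambda$ is a lattice homomorphism is incomplete: the identity $\up(I\vee J)=\up I\cap\up J$ together with Lemma~\ref{lem: more idempotents} does give $U_{I\vee J}=U_I\vee U_J$, but $U_{I\cap J}=U_I\cap U_J$ is not a formal fact about upsets; it needs the frame law in $\arch(A)$ (Theorem~\ref{thm: arch frame}): $K\vee I=K\vee J=A$ implies $K\vee(I\cap J)=(K\vee I)\cap(K\vee J)=A$. Second, your injectivity outline is right in substance, but ``faithfulness of the Specker construction'' is not among the cited statements of Theorem~\ref{thm: UMP}; the paper instead proves injectivity of $\sigma$ directly, reducing via the orthogonal decomposition of Remark~\ref{rem: Specker facts} to showing that $\sigma(x_{I\wedge\lnot J})=0$ forces $I\subseteq J$, which is precisely your observation that $\up I$ determines $I$.
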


\begin{proof}
We first define $\lambda : \arch(A) \to \Id(N(X))$ by setting $\lambda(I) = \chi_{U_I}$. 
By Lemmas~\ref{lem: idempotents} and \ref{lem: more idempotents}, $\chi_{U_I} \in \Id(N(X))$, so $\lambda$ is well defined. We show that $\lambda$ is a bounded lattice homomorphism. It is clear that $U_0 = \varnothing$ and $U_A = X$. We show that $U_{I \cap J} = U_I \cap U_J$. 
It is obvious that $I \subseteq J$ implies $U_I \subseteq U_J$. Therefore, $U_{I \cap J} \subseteq U_I \cap U_J$. For the reverse inclusion, suppose that $K \in U_I \cap U_J$. Then $K \vee I = K \vee J = A$, so $(K \vee I) \cap (K \vee J) = A$.
Since $\arch(A)$ is a frame, $K \vee (I \cap J) = A$, and so $K \in U_{I \cap J}$. We next show that $U_{I \vee J} = U_I \vee U_J$. The inclusion $U_I \vee U_J \subseteq U_{I \vee J}$ is obvious. 
For the reverse inclusion, suppose that $K \in U_{I \vee J}$. Then $K \vee (I \vee J) = A$. Let $L \in X$ with $K \subseteq L$. If $L \vee I = A$, then $L \in U_I$. If not, then as $(L \vee I) \vee J = A$, we have $L \vee I \in U_J$ so $L \in \down U_J$. Therefore, in any case, 
$L \in \down U_I \cup \down U_J$, and so 
\[
K \in \Int(\down U_I \cup \down U_J) = \Int(\down (U_I \cup U_J)) = \Int\,\Cl(U_I \cup U_J) = U_I \vee U_J.
\] 
Thus, $\lambda$ is a bounded lattice homomorphism, and hence
it extends to a $\BA$-morphism $\tau : B \to \Id(N(X))$ (see, e.g., \cite[Sec.~V.4]{BD74}).  By Theorem~\ref{thm: UMP}(\ref{UMP: 1}), there is a $\bal$-morphism $\sigma : \mathbb{R}[B] \to N(X)$ with $\sigma(x_I) = \tau(I) = \chi_{U_I}$. 

To simplify notation, set $e_I = \chi_{\up I}$ and $f_I = \chi_{U_I}$. Then $e_I, f_I$ are complementary idempotents of $N(X)$ by Lemma~\ref{lem: more idempotents}. 
We show that $\sigma$ is one-to-one. Let $a \in \mathbb{R}[B]$ with $\sigma(a) = 0$. By Remark~\ref{rem: Specker facts}, we may write $a = r_1 x_{b_1} + \cdots + r_n x_{b_n}$ for some $r_i \in \mathbb{R}$ and $b_i \in B$ with $b_i \wedge b_j = 0$ whenever $i \ne j$. \color{black} From this we see that $ax_{b_i} = r_i x_{b_i} \in \ker(\sigma)$. Therefore, 
it suffices to show that $\sigma(rx_b) = 0$ implies $rx_b = 0$. If $r = 0$, this is clear, so suppose $r \ne 0$. Then $x_b \in \ker(\sigma)$. Since $B$ is generated by $\arch(A)$, we may write $b = (I_1 \wedge \lnot J_1) \vee \cdots \vee (I_n \wedge \lnot J_n)$ for some $I_k, J_k \in \arch(A)$. Then $0 \le x_{I_k \wedge \lnot J_k} \le x_b$, so each $x_{I_k \wedge \lnot J_k} \in \ker(\sigma)$.  Suppose that $\sigma(x_{I \wedge \lnot J}) = 0$. We have
\[
0 = \sigma(x_{I \wedge \lnot J}) = \sigma(x_I) \wedge \sigma(x_{\lnot J}) = \tau(I) \wedge \lnot \tau(J) = f_I \wedge e_J,
\]
where the last equality follows from Lemma~\ref{lem: more idempotents}. Therefore, $e_J \le \lnot f_I = e_I$, so $\up J \subseteq \up I$. This yields $I \subseteq J$, so $I \wedge \lnot J = 0$ in $B$, and hence $x_{I \wedge \lnot J} = 0$. This shows that $\sigma$ is one-to-one.

We next show that each element of $N(X)$ is a join from $\sigma[\mathbb{R}[B]]$. To do this we first show that each nonnegative element of $N(X)$ is a join of scalar multiples of the $e_I = \chi_{\up I}$. Let $0 \le f \in N(X)$. We show that $f$ is the join of those $re_I$ for $r \in \mathbb{R}$ with $re_I \le f$. Clearly $f$ is above this join. Let $I \in X$ and set $r = f(I)$. Then $f(J) \ge r$ for each $J \in \up I$ since $f$ is order preserving. Because $0 \le f$, this shows $re_I \le f$. But $(re_I)(I) = r$. Therefore, $\bigvee \{ re_I \mid r \in \mathbb{R}, re_I \le f\} = f$, and so $\bigsqcup \{ re_I \mid r \in \mathbb{R}, re_I \le f\} = f$, where $\bigsqcup$ is the normalization of $\bigvee$ (see Theorem~\ref{thm: danet}). 
For an arbitrary $f \in N(X)$, there is $s \in \mathbb{R}$ with $f + s \ge 0$. Thus, $f+s$ is a join from the image of $\mathbb{R}[B]$, and hence so is $f$ by Remark~\ref{rem: l-ring properties}(\ref{l-ring: vee + a}). Consequently, by Theorem~\ref{thm: Ded}, there is a $\bal$-isomorphism $\varphi : D(\mathbb{R}[B]) \to N(X)$ with $\varphi(x_I) = \sigma(x_I) = \chi_{U_I}$.
\end{proof}

\begin{theorem}\label{thm:normal & AC}
Assuming \emph{(AC)}, there is a $\bal$-isomorphism $\psi : N(X) \to B(Y_A)$ such that $\psi(f) = f|_{Y_A}$ is the restriction of $f$ to $Y_A$.
\end{theorem}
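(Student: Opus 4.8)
The plan is to avoid proving directly that restriction is an isomorphism, and instead to realize it as a composite of two isomorphisms already at hand. Recall the isomorphism $\theta : D(\mathbb{R}[B]) \to B(Y_A)$ constructed at the beginning of Section~\ref{sec: canonical extensions in bal point-free} (see Figure~\ref{fig}), which satisfies $\theta(x_I) = \chi_{Z_\ell(I)^c}$ for each $I \in \arch(A)$, together with the isomorphism $\varphi : D(\mathbb{R}[B]) \to N(X)$ of Theorem~\ref{thm:normal}, which satisfies $\varphi(x_I) = \chi_{U_I}$. Setting $\psi := \theta \circ \varphi^{-1}$ immediately yields a $\bal$-isomorphism $N(X) \to B(Y_A)$, so the whole content of the theorem is to check that this abstractly defined $\psi$ coincides with the concrete restriction map $\rho(f) = f|_{Y_A}$. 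This is also where (AC) enters, since the existence of $\theta$ rests on $\arch(A) \cong \mathcal{O}(Y_A)$ and on $I = \bigcap Z_\ell(I)$, both of which require choice (see Remark~\ref{ref: arch iso to opens} and Remark~\ref{rem: properties of arch}(\ref{properties of arch: 3})).

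The key local observation I would establish is that each $M \in Y_A$ is not merely a point of $X$ (maximal $\ell$-ideals are proper and archimedean by Remark~\ref{rem: properties of arch}(\ref{properties of arch: 2})), but is in fact maximal in the poset $X = \arch(A)\setminus\{A\}$, so that $\up M = \{M\}$. Since $\{M\}$ is then the smallest open neighborhood of $M$ in the Alexandroff topology, the suprema and infima in the definitions of $f^*$ and $f_*$ are all attained at $\{M\}$, and one gets $h^\#(M) = h(M)$ for every $h \in B(X)$. Thus normalization is transparent at the points of $Y_A$: for each of the operations on $N(X)$ — which by Theorem~\ref{thm: danet} are the normalizations of the pointwise operations — and each $M \in Y_A$, the operation agrees with the pointwise one at $M$. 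This shows at once that $\rho$ is a well-defined $\bal$-morphism, and, after computing $U_I \cap Y_A = \{ M \in Y_A \mid M \vee I = A\} = Z_\ell(I)^c$, that $\rho(\chi_{U_I}) = \chi_{Z_\ell(I)^c} = \theta(x_I) = \psi(\chi_{U_I})$. Since $\psi$ and $\rho$ are $\bal$-morphisms agreeing on the idempotents $\chi_{U_I}$, they agree on the subalgebra $\varphi[\mathbb{R}[B]]$ these generate.

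Finally I would upgrade agreement on this join-dense subalgebra to agreement on all of $N(X)$. For $f \in N(X)$, density (as in the proof of Theorem~\ref{thm:normal}) gives $f = \bigsqcup \{ g \in \varphi[\mathbb{R}[B]] \mid g \le f\}$, where $\bigsqcup$ is the normalization of the pointwise join. Fixing $M \in Y_A$, transparency of normalization at $M$ gives $\rho(f)(M) = f(M) = \sup\{ g(M) \mid g \le f\}$; on the other hand, since the order-isomorphism $\psi$ preserves joins and joins in $B(Y_A)$ are pointwise suprema, $\psi(f)(M) = \sup\{ \psi(g)(M) \mid g \le f\} = \sup\{ g(M) \mid g \le f\}$, using the agreement on $\varphi[\mathbb{R}[B]]$. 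Hence $\psi(f) = \rho(f)$, as desired. I expect the main obstacle to be exactly this identification $\psi = \rho$: the difficulty is that a general $\bal$-morphism need not preserve the infinitary joins used in the density argument, and what rescues the argument is the maximality of $M$ inside $X$, which forces $\up M = \{M\}$ and so neutralizes the normalization precisely at the points where the functions are evaluated.
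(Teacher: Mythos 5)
Your proof is correct, but it takes a genuinely different route from the paper's. The paper works with the restriction map directly: using (AC) in the form that every proper archimedean $\ell$-ideal is contained in a maximal one, it derives the formula $f^\#(I) = \inf\{ f(M) \mid M \in Y_A,\ I \subseteq M\}$ for all $I \in X$; the morphism property then follows from $f^\#|_{Y_A} = f|_{Y_A}$ (your ``transparency''), injectivity follows because this formula shows a normal function is determined by its values on $Y_A$, and surjectivity is proved by exhibiting an explicit preimage $h^u(I) = \inf\{ h(M) \mid M \in Y_A,\ I \subseteq M\}$ of each $h \in B(Y_A)$. You instead get bijectivity for free by setting $\psi = \theta \circ \varphi^{-1}$ and spend the effort on identifying this composite with restriction $\rho$: transparency at the points of $Y_A$ (which you obtain more simply than the paper, without (AC), from $\up M = \{M\}$) makes $\rho$ a $\bal$-morphism; the computation $U_I \cap Y_A = Z_\ell(I)^c$ gives agreement of $\rho$ and $\psi$ on the idempotents $\chi_{U_I}$, hence on the $\mathbb{R}$-subalgebra $\varphi[\mathbb{R}[B]]$ they generate; and join-density of $\varphi[\mathbb{R}[B]]$ in $N(X)$, together with the facts that an isomorphism preserves existing joins and that joins in $B(Y_A)$ are pointwise, upgrades this to $\psi = \rho$. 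All of these steps check out (in particular, the $\chi_{U_I}$ do generate $\varphi[\mathbb{R}[B]]$ as an $\mathbb{R}$-algebra, since $B$ is generated as a boolean algebra by $\arch(A)$ and boolean operations on idempotents are ring-polynomial operations). It is worth noting that your identification argument is essentially the one the paper deploys in its final theorem to prove $\theta = \psi \circ \varphi$, including the computation $Z_\ell(I)^c = U_I \cap Y_A$ which appears there verbatim; so your route would make that final commutativity nearly automatic, at the cost of making the present theorem depend on Theorem~\ref{thm:normal} and on the construction of $\theta$ (which is where (AC) now enters, rather than through maximal extensions of archimedean $\ell$-ideals). What the paper's direct proof buys that yours does not is the explicit description of the inverse $h \mapsto h^u$ and the formula $f(I) = \inf\{ f(M) \mid M \in Y_A,\ I \subseteq M\}$, which exhibits normal functions as exactly the upper extensions of bounded functions on $Y_A$.
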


\begin{proof}
To see that $\psi$ is a $\bal$-morphism, we first observe that if $f \in B(X)$ and $I \in X$, then since $\up I$ is the least open neighborhood of $I$ in $X$, we have 
\[
f^*(I) = \inf \{ \sup f[U] \mid U \in \mathcal{N}_I \} = \sup\{ f(J) \mid I \subseteq J \}.
\]
This yields that $f^*(M)
=f(M)$ for each $M \in Y_A$. A similar calculation gives $f_*(I) = \inf\{ f(J) \mid I \subseteq J \}$.
Therefore, since $f^*$ is order reversing by Lemma~\ref{lem: order preserving}(\ref{order preserving: 2}),
\begin{align*}
f^\#(I) &=(f^*)_*(I)=\inf\{ f^*(J) \mid I \subseteq J \} =\inf\{ f^*(M) \mid M \in Y_A, I \subseteq M \} \\
& =\inf\{ f(M) \mid M \in Y_A, I \subseteq M \}.
\end{align*}
Consequently, $f^\#|_{Y_A}=f|_{Y_A}$.

Denoting the sum in $N(X)$ by $\oplus$, we have for $f,g \in N(X)$
\[
\psi(f\oplus g) = \psi((f+g)^\#) =  
(f+g)^\#|_{Y_A} =  
(f+g)|_{Y_A} =
 f|_{Y_A} + g|_{Y_A} = \psi(f) + \psi(g).
\]
A similar calculation shows that $\psi$ preserves the other operations. Thus, $\psi$ is a $\bal$-morphism.

We next show that $\psi$ is onto. Let $h \in B(Y_A)$ and define $h^u$ on $X$ by
\begin{align*}
h^u(I) &= \inf \{ h(M) \mid M \in Y_A, I \subseteq M\}. 
\end{align*}
Then $h^u \in B(X)$ and
\begin{align*}
(h^u)^\#(I)=\inf\{ h^u(M) \mid M \in Y_A, I \subseteq M \}=\inf\{ h(M) \mid M \in Y_A, I \subseteq M \}=h^u(I).
\end{align*}
This implies that $h^u \in N(X)$. By definition of $h^u$ we have $ \psi(h^u)=h^u|_{Y_A} = h$. Thus, $\psi$ is onto. 

Finally, we show that $\psi$ is one-to-one. Let $f,g \in N(X)$ with $\psi(f) = \psi(g)$. Then $f|_{Y_A}=g|_{Y_A}$ and 
for each $I \in X$ we have
\begin{align*}
f(I) &=f^\#(I)=\inf\{ f(M) \mid M \in Y_A, I \subseteq M \}=\inf\{ f|_{Y_A}(M) \mid M \in Y_A, I \subseteq M \}\\
&=\inf\{ g|_{Y_A}(M) \mid M \in Y_A, I \subseteq M \}=\inf\{ g(M) \mid M \in Y_A, I \subseteq M \}=g^\#(I)=g(I),
\end{align*}
which yields $f=g$. Thus, $\psi$ is one-to-one.
\end{proof}

Recalling the isomorphism $\theta : D(\mathbb{R}[B]) \to B(Y_A)$ in the beginning of Section~\ref{sec: canonical extensions in bal point-free} (see  Figure~\ref{fig}) and putting Theorems~\ref{thm:normal} and~\ref{thm:normal & AC} together, we obtain:

\begin{theorem}
Assuming \emph{(AC)}, for $A \in \bal$, the algebras $D(\mathbb{R}[B])$, $N(X)$, and $B(Y_A)$ are all isomorphic. Moreover, if $\gamma = \varphi \circ \alpha$, then $$\gamma(a)(I) = \sup\{ r \in \mathbb{R} \mid (a-r)^- \in I\}$$
and the following diagram commutes.
\[
\begin{tikzcd}[column sep = 5pc]
& D(\mathbb{R}[B]) \arrow[d, "\varphi"'] \arrow[dd, bend left = 40, "\theta"]\\
A \arrow[rd, "\zeta_A"'] \arrow[ru, "\alpha"] \arrow[r, "\gamma"] & N(X) \arrow[d, "\psi"'] \\
& B(Y_A) 
\end{tikzcd}
\]
\end{theorem}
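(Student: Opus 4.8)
The plan is to deduce everything from the three isomorphisms already in hand together with the description of normal functions. The claim that $D(\mathbb{R}[B])$, $N(X)$, and $B(Y_A)$ are pairwise isomorphic is immediate: $\varphi$ is a $\bal$-isomorphism by Theorem~\ref{thm:normal}, $\psi$ is one by Theorem~\ref{thm:normal & AC} (this is where (AC) enters), and $\theta$ is the isomorphism from Figure~\ref{fig}. So only the commutativity of the diagram and the displayed formula for $\gamma$ require argument.

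Since $\gamma = \varphi\circ\alpha$ by definition and $\theta\circ\alpha = \zeta_A$ by the construction motivating Definition~\ref{def: point-free alpha}, the whole diagram will commute as soon as I establish the single identity $\theta = \psi\circ\varphi$; indeed then $\psi\circ\gamma = \psi\circ\varphi\circ\alpha = \theta\circ\alpha = \zeta_A$. To prove $\theta = \psi\circ\varphi$, I would compare the two $\bal$-morphisms on the idempotents $x_I$ for $I\in\arch(A)$. Here $\theta(x_I) = \chi_{Z_\ell(I)^c}$, while $(\psi\circ\varphi)(x_I) = \psi(\chi_{U_I}) = \chi_{U_I}|_{Y_A} = \chi_{U_I\cap Y_A}$. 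The crux is the set identity $U_I\cap Y_A = Z_\ell(I)^c$: for a maximal $\ell$-ideal $M$ one has $M\vee I = A$ iff $I\not\subseteq M$, by maximality of $M$. Since the $x_I$ generate $B$ and both maps are ring homomorphisms, they agree on $\mathbb{R}[B]$; as both preserve arbitrary joins and $\mathbb{R}[B]$ is join-dense in $D(\mathbb{R}[B])$ (Theorem~\ref{thm: Ded}), they agree on all of $D(\mathbb{R}[B])$.

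For the formula, commutativity gives $\gamma(a)|_{Y_A} = \psi(\gamma(a)) = \zeta_A(a)$. Because $\gamma(a)\in N(X)$ is normal, the computation in the proof of Theorem~\ref{thm:normal & AC} yields
\[
\gamma(a)(I) = \inf\{\gamma(a)(M) \mid M\in Y_A,\ I\subseteq M\} = \inf\{\zeta_A(a)(M) \mid M\in Y_A,\ I\subseteq M\}.
\]
It then remains to match this infimum with $\sup\{r\in\mathbb{R}\mid (a-r)^-\in I\}$. Using (AC) we have $I = \bigcap\{M\in Y_A\mid I\subseteq M\}$ (Remark~\ref{rem: properties of arch}(\ref{properties of arch: 3})), together with the equivalence $(a-r)^-\in M$ iff $\zeta_A(a)(M)\ge r$ (Remark~\ref{rem: positive part}(\ref{positive part: 2})). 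Hence $(a-r)^-\in I$ iff $r\le \zeta_A(a)(M)$ for every $M\supseteq I$, i.e.\ iff $r\le \inf\{\zeta_A(a)(M)\mid M\supseteq I\}$; taking the supremum over all such $r$ gives the claimed equality.

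I expect the main obstacle to be the verification of $\theta = \psi\circ\varphi$, in particular the set-theoretic identity $U_I\cap Y_A = Z_\ell(I)^c$ and the passage from agreement on the generators $x_I$ to agreement on all of the Dedekind completion $D(\mathbb{R}[B])$. Once this structural identity is in place, the formula is a short consequence of the normal-function description of Theorem~\ref{thm:normal & AC} together with $I = \bigcap\{M\supseteq I\}$.
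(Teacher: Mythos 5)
Your proof is correct, and its structural half coincides with the paper's argument: the three isomorphisms are quoted the same way, and commutativity is obtained exactly as in the paper by checking $\theta = \psi\circ\varphi$ on the generators $x_I$ via the set identity $U_I\cap Y_A = Z_\ell(I)^c$ (your direct appeal to maximality of $M$ even bypasses the paper's small detour through Lemma~\ref{lem: arch}(\ref{arch: 4}) relating $M+I=A$ to $M\vee I=A$). Where you genuinely diverge is the formula for $\gamma$. The paper proves it from the inside: it expands $\varphi(\alpha(a))$ as the normalization of the pointwise join $f=\bigvee\{r\chi_{\up I}\mid (a-r)^-\in I\}$, computes $f(J)=\sup\{r\mid (a-r)^-\in J\}$, and then spends most of its effort showing $f\in N(X)$ directly via Dilworth's criterion (lower semicontinuity together with $f^{-1}(-\infty,s)$ being a union of regular closed sets), which is precisely where the appendix lemmas (Lemma~\ref{lem: not A}, Lemma~\ref{lem: I = A}) enter; that verification never routes through $Y_A$. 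You instead exploit the ambient hypothesis (AC): from $\psi\circ\gamma=\zeta_A$, the identity $f(I)=\inf\{f(M)\mid M\in Y_A,\ I\subseteq M\}$ for normal $f$ (already established inside the proof of Theorem~\ref{thm:normal & AC}), the representation $I=\bigcap\{M\in Y_A\mid I\subseteq M\}$, and the equivalence $(a-r)^-\in M$ iff $\zeta_A(a)(M)\ge r$, the formula follows in a few lines. Both arguments are valid; yours is considerably shorter and makes the formula a corollary of commutativity, at the price of leaning on maximal $\ell$-ideals throughout, whereas the paper's longer computation shows that the identification of $\gamma(a)$ with the pointwise supremum function --- in particular the normality of that function --- does not itself require passing through $Y_A$, which is more in the spirit of the choice-free description $(N(X),\gamma)$ that the authors are advertising.
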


\begin{proof} 
The isomorphism $\theta : D(\mathbb{R}[B]) \to B(Y_A)$ satisfies $\theta(x_I) = \chi_{Z_\ell(I)^c}$ for each $I \in \arch(A)$ and $\alpha = \theta^{-1} \circ \zeta_A$, where $Z_\ell(I)^c$ denotes the complement of $Z_\ell(I)$ in $Y_A$. 
We show that $\theta = \psi \circ \varphi$.  Since all three maps are $\dbal$-isomorphisms and so preserve arbitrary joins, it is enough to show that they agree on $\mathbb{R}[B]$. For the latter it is enough to show that they agree on each $x_I$ for $I \in X$. We have $\theta(x_{I}) = \chi_{Z_\ell(I)^c}$ and $\varphi(x_I) = \chi_{U_I}$. Since $\psi(\chi_{U_I}) = \chi_{U_I}|_{Y_A}$, we then need to show that $Z_{\ell}(I)^c = U_I \cap Y_A$. To see this, if $M \in Y_A$, then $M \in Z_{\ell}(I)^c$ iff $I \not\subseteq M$. Since $M$ is maximal, $I \not\subseteq M$ iff $M + I = A$. But $M + I = A$ iff $M \vee I = A$ by Lemma~\ref{lem: arch}(\ref{arch: 4}). Since $M \vee I = A$ iff $M \in U_I$, it follows that $Z_\ell(I)^c = U_I \cap Y_A$, which completes the proof that $\theta = \psi \circ \varphi$. Thus, 
\[
\psi \circ \gamma = \psi \circ \varphi \circ \alpha = \theta \circ \alpha = \zeta_A,
\]
which shows that the diagram is commutative.

It is left to show that the formula for $\gamma$ is valid. Suppose that $0 \le a \in A$. Since $\varphi$ preserves arbitrary joins, by Lemmas~\ref{lem: preservation of adding a scalar}(\ref{preservation of adding a scalar: 1}),~\ref{lem: more idempotents}, and Theorem~\ref{thm:normal}, we have
\begin{align*}
\gamma(a) &= \varphi(\alpha(a)) = \bigsqcup \{ \varphi(rx_{\lnot I}) \mid (a-r)^- \in I \} \\
&= \bigsqcup \{ r\varphi(x_{\lnot I}) \mid (a-r)^- \in I \} \\
&= \bigsqcup \{ r \chi_{\up I} \mid (a-r)^- \in I\}.
\end{align*}
Let $f \in B(X)$ be the pointwise join of $\{ r\chi_{\up I} \mid (a-r)^- \in I \}$. Then $\gamma(a) = f^\#$ by Theorem~\ref{thm: danet}. We claim that $f(J) = \sup\{ r \mid (a - r)^- \in J\}$ for each $J \in X$, and that $f \in N(X)$.
We have
\begin{align*}
f(J) &= 
\sup \{ r\chi_{\up I}(J) \mid (a-r)^- \in I \} \\
&= \sup \{ r \mid (a-r)^- \in I, J \in \up I \} \\
&= \sup \{ r \mid (a-r)^- \in J\}.
\end{align*}
To see the last equality, if $(a-r)^- \in I$ and $J \in \up I$, then $(a-r)^- \in J$. Conversely, if $(a-r)^- \in J$, then setting $I = J$, we have $(a-r)^- \in I$ and $J \in \up I$. 

To show that $f \in N(X)$, by \cite[Thm.~3.2]{Dil50} it is enough to show that $f$ is lower semicontinuous and $f^{-1}(-\infty, s)$ is a union of regular closed sets for each $s \in \mathbb{R}$. First, $f$ is clearly order preserving, so $f$ is lower semicontinuous by Lemma~\ref{lem: order preserving}(\ref{order preserving: 1}). Next, Let $I \in f^{-1}(-\infty, s)$. Set $t = f(I) < s$, so $(a-t)^- \in I$ by Lemma~\ref{lem: not A}(\ref{not A: 1}). In addition, $J := I \vee \ar{(a-t)^+} \ne A$ by Lemma~\ref{lem: not A}(\ref{not A: 3}). Let $U = \up J$, an open subset of $X$. We claim that $I \in \Cl(U) \subseteq f^{-1}(-\infty, s)$. Since $I \subseteq J$, we have $I \in \down(\up J) = \Cl(U)$. Because $f$ is order preserving, $f^{-1}(-\infty, s)$ is a downset. Therefore, to show that $\Cl(U) \subseteq f^{-1}(-\infty, s)$, it suffices to show that $\up J \subseteq f^{-1}(-\infty, s)$. Let $K \in X$ with $J \subseteq K$. We have $(a-t)^- \in I \subseteq J$, so $(a-t)^- \in K$, and $(a-t)^+ \in J$, so $(a-t)^+ \in K$.  Thus, $a-t \in K$ by Remark~\ref{rem: l-ring properties}(\ref{l-ring: positive plus negative}). Because $(a-t)^- \in K$, we have $t \le f(K)$.  Let $f(K) = r$. Then $(a-r)^- \in K$ by Lemma~\ref{lem: not A}(\ref{not A: 1}). If $r > t$, we have $\ar{(a-t)^+, (a-r)^-} = A$ by Lemma~\ref{lem: I = A}(\ref{I = A: 1}), so $K = A$. This contradiction shows that $f(K) \le t$, so $f(K) < s$. Therefore, $\up J \subseteq f^{-1}(-\infty, s)$, and hence $f^{-1}(-\infty, s)$ is a union of regular closed sets. Thus, we conclude that $f \in N(X)$. 

Since $f \in N(X)$, we have $f^\# = f$, so $\gamma(a) = f^\# = f$. 
This shows that if $0 \le a$, then $\gamma(a)(I) = \sup \{ r \mid (a-r)^- \in I\}$ for all $I \in X$. 
If $a$ is arbitrary, then there is $n \ge 1$ with $a + n \ge 0$. Since $\gamma$ preserves addition, by the above argument we have:
\begin{align*}
\gamma(a)(J) &= (\gamma(a+n) - n)(J) = \sup \{ r \mid (a + n -r)^- \in J\} - n \\
&= \sup \{ r - n \mid (a + n - r)^- \in J\} = \sup \{ s \mid (a - s)^- \in J \},
\end{align*}
completing the proof.
\end{proof}

Consequently, we have three equivalent ways to think about canonical extensions of $\bal$-algebras:
\begin{enumerate}
\item The simplest is as $(B(Y_A), \zeta_A)$ which is a direct generalization of viewing the powerset of its Stone space as a canonical extension of a boolean algebra \cite{JT51}. However, this requires (AC). 
\item A choice-free description is as $(N(X), \gamma)$, which generalizes the choice-free description of a canonical extension of a boolean algebra as regular opens of the Alexandroff space of its proper filters given in \cite{BH20}. 
\item Finally, a point-free description is as $(D(\mathbb{R}[B]), \alpha)$, which is technically the most challenging.
 It is this description that generalizes the point-free description of a canonical extension of a boolean algebra given in Section~\ref{sec: canonical extensions in BA}. 
\end{enumerate}

In point-free topology there is a description of normal functions on an arbitrary frame  \cite{GKP09,GP11,GP14,GMP16}. We finish the article by the following remark, which connects our results to that line of research. 

\begin{remark}
We recall that $\Frm$ is the category of frames and frame homomorphisms. If $L, K$ are frames, then we write $\hom_{\Frm}(L,K)$ for the set of frame homomorphisms from $L$ to $K$. Let $L$ be a frame. It is well known that homomorphic images of $L$ are characterized by nuclei on $L$ (see, e.g., \cite[Sec.~III.5.3]{PP12}). For a frame $L$ we write $\func{Nuc}(L)$ for the frame of nuclei on $L$. We also write $\L(\mathbb{R})$ for the frame of opens of $\mathbb{R}$. A point-free description of $\L(\mathbb{R})$ is due to Banaschewski \cite{Ban97} (see also \cite[Sec.~XIV.1]{PP12}). The role of $C(X)$ is then played by the $\ell$-algebra $\mathcal{C}(L) = \hom_{\Frm}(\L(\mathbb{R}), L)$. 

 As was shown in \cite[Sec.~5]{GKP09} the role of the algebra of all real-valued functions on $X$ is played by the $\ell$-algebra $F(L) = \hom_{\Frm}(\L(\mathbb{R}), \func{Nuc}(L))$, and that of $B(X)$ by the bounded subalgebra $F^*(L)$ of $F(L)$. Then the operators $(-)^*, (-)_* : B(X) \to B(X)$ generalize to $(-)^*, (-)_* : F^*(L) \to F^*(L)$ \cite[Sec.~3]{GMP16}, yielding the notion of normal function on $L$. We write $N(L) = \{ f \in F^*(L) \mid (f^*)_* = f\}$ for the set of (bounded) normal functions on $L$. It follows from \cite{Dan15} and \cite[Sec.~8]{BMO16} that $N(L) \in \dbal$. 
 
 Let $A \in \bal$,  $L =\arch(A)$, and $(B, i)$ be the free boolean extension of $L$. Then $B$ is isomorphic to $\Id(N(L))$, yielding that $D(\mathbb{R}[B])$ is isomorphic to $N(L)$. Thus, our point-free description of a canonical extension of $A$ can alternatively be described using the algebra of normal functions in point-free topology.
 \end{remark}

\appendix

\section*{Appendix: Technical lemmas}
\renewcommand{\thetheorem}{A.\arabic{theorem}}
\setcounter{theorem}{0}

In this appendix we present the technical lemmas used in Section~\ref{sec: canonical extensions in bal point-free} to prove that the pair $(D(\mathbb{R}[B]),\alpha)$ is a canonical extension of $A\in\bal$.
We start by recalling that $0 \le u \in A$ is a \emph{weak order-unit} if $a \wedge u = 0$ implies $a = 0$ for each $a \in A$. It is well known that a strong order-unit is a weak order-unit (see, e.g., \cite[Lem.~XIII.11.4]{Bir79}). It is easy to see that any positive multiple of a strong order-unit is again a strong order-unit. Thus, every positive multiple of a strong order-unit is a weak order-unit. We will use this in the proof of next lemma.

\begin{lemma} \label{rem: re meet sf}
Let $A \in \bal$.
\begin{enumerate}
\item \label{re meet sf: 1} If $e, f \in \Id(A)$, and $0 \le r, s \in \mathbb{R}$, then $re \wedge sf = \min(r, s)(e \wedge f)$. 
\item \label{re meet sf: 3} Let $a \in A$ and $r, s \in \mathbb{R}$ with $r < s$. If $a \vee r \ge s$, then $a \ge s$.
\end{enumerate}
\end{lemma}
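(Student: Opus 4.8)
The plan is to treat the two parts separately, since they use different ideas. For part (\ref{re meet sf: 1}) I would first reduce to the case $r \le s$ by symmetry (swapping the roles of $(r,e)$ and $(s,f)$ changes neither side, as both $\min(r,s)$ and $e \wedge f$ are symmetric). With $r \le s$ the claim becomes $re \wedge sf = r(ef)$, recalling that $e \wedge f = ef$ in $\Id(A)$. The lower bound $r(ef) \le re \wedge sf$ is a direct computation using that products of nonnegative elements are nonnegative: $re - r(ef) = r\,e(1-f) \ge 0$ and $sf - r(ef) = (s-r)ef + s\,(1-e)f \ge 0$. For the reverse inequality I would introduce the four pairwise orthogonal idempotents $e_1 = ef$, $e_2 = e(1-f)$, $e_3 = (1-e)f$, $e_4 = (1-e)(1-f)$, which sum to $1$, and set $g = re \wedge sf \ge 0$. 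Multiplying the inequality $g \le sf$ by the nonnegative idempotent $e_2$ gives $ge_2 \le sf\cdot e(1-f) = 0$, hence $ge_2 = 0$; multiplying $g \le re$ by $e_3$ and by $e_4$ gives $ge_3 = ge_4 = 0$ in the same way. Since $g = g(e_1+e_2+e_3+e_4) = ge_1$ and $g \le re$, multiplying by $e_1 = ef$ yields $g = ge_1 \le re\cdot ef = r(ef)$, which combined with the lower bound gives equality.

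For part (\ref{re meet sf: 3}) I would rewrite the hypothesis using distributivity. Since $r < s$ gives $s \wedge r = r$, meeting $a \vee r \ge s$ with $s$ and using Remark~\ref{rem: l-ring properties}(\ref{l-ring: vee wedge a}) yields $s = s \wedge (a \vee r) = (a \wedge s) \vee r$. Writing $u = s - (a \wedge s) \ge 0$ and subtracting $a \wedge s$ from both sides (Remark~\ref{rem: l-ring properties}(\ref{l-ring: vee + a})) gives $u = (r - (a\wedge s))^+ = (u - \delta)^+$, where $\delta = (s - r)\cdot 1 > 0$. The key step is then the identity $x - (x \wedge y) = (x-y)^+$, which follows from Remark~\ref{rem: l-ring properties}(\ref{l-ring: vee + a}) and (\ref{l-ring: negation and join}): applied with $x = u$ and $y = \delta$ it turns $u = (u-\delta)^+$ into $u \wedge \delta = 0$. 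Since $\delta$ is a positive multiple of the strong order-unit $1$, it is a weak order-unit (as noted just before the lemma), so $u \wedge \delta = 0$ forces $u = 0$; that is, $a \wedge s = s$, i.e. $a \ge s$.

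I expect the main obstacle to be the reverse inequality in part (\ref{re meet sf: 1}): the direct estimate only yields one direction, and the effective idea is to detect that $g$ is concentrated on the single idempotent component $ef$ by annihilating it against $e_2, e_3, e_4$. In part (\ref{re meet sf: 3}) the only real content is recognizing that the problem collapses to the disjointness relation $u \wedge \delta = 0$, which the weak order-unit property set up in the preamble resolves immediately.
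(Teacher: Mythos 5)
Your proposal is correct in both parts. For part (\ref{re meet sf: 1}) your route is genuinely different from the paper's: the paper stays inside the lattice structure, observing that $(e - (e\wedge f)) \wedge f = 0$ and invoking Remark~\ref{rem: l-ring properties}(\ref{l-ring: disjoint}) to conclude $r(e - (e\wedge f)) \wedge sf = 0$, and then sandwiching $0 \le (re \wedge sf) - r(e\wedge f) \le r(e - (e\wedge f)) \wedge sf = 0$ via translation of meets. You instead exploit the multiplicative structure: decomposing $1$ into the four orthogonal idempotents $ef$, $e(1-f)$, $(1-e)f$, $(1-e)(1-f)$ and multiplying the defining inequalities of $g = re \wedge sf$ by each of them to see that $g$ is supported on $ef$ alone, whence $g = g\,ef \le r(ef)$. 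Both arguments are valid; the paper's is shorter and needs only the disjointness lemma already catalogued in Remark~\ref{rem: l-ring properties}, while yours relies on order-preservation of multiplication by nonnegative elements (a standard $\ell$-ring fact, implicitly also behind the paper's toolkit) and makes the component structure explicit. For part (\ref{re meet sf: 3}) your argument is essentially the paper's in different clothing: the paper negates and distributes to reach $0 = [(s-a)\vee 0] \wedge (s-r)$, whereas you meet the hypothesis with $s$ and use the identity $x - (x \wedge y) = (x-y)^+$ to reach $u \wedge (s-r) = 0$ with $u = s - (a \wedge s)$; in both versions the punchline is identical, namely that $s - r$ is a positive multiple of the strong order-unit $1$ and hence a weak order-unit, which kills the disjoint part and yields $a \ge s$.
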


\begin{proof}

(\ref{re meet sf: 1}) Without loss of generality suppose that $r \le s$. Since $(e - (e\wedge f)) \wedge f = (e - f) \wedge f= 0$, we have $r(e - (e\wedge f))\wedge sf = 0$ by Remark~\ref{rem: l-ring properties}(\ref{l-ring: disjoint}). Therefore,
\[
0 \le (re \wedge sf) - r(e \wedge f) \le r(e - (e\wedge f))\wedge sf = 0
\]
by Remark~\ref{rem: l-ring properties}(\ref{l-ring: vee + a}). Thus, $re \wedge sf = r(e \wedge f) = \min(r,s)(e \wedge f)$.

(\ref{re meet sf: 3}) If $a \vee r \ge s$, then $-s \ge -(a \vee r) = (-a) \wedge (-r)$, so $0 \ge s + [(-a) \wedge (-r)] = (s-a) \wedge (s-r)$. This yields
\[
0 = [(s-a) \wedge (s-r)] \vee 0 = [(s-a) \vee 0] \wedge (s-r).
\]
Because $s-r > 0$, it is a weak order-unit. 
Therefore, $(s-a) \vee 0 = 0$, and so $s-a \le 0$. Thus, $s \le a$.
\end{proof}

\begin{lemma}  \label{lem: arch}
Let $A \in \bal$.
\begin{enumerate}
\item \label{arch: 1} If $I + J = A$, then there are $a \in I$, $b \in J$ with $0 \le a, b$ and $a + b = 1$.
\item \label{arch: 2} $\ar{I} = A$ implies $I = A$.
\item \label{arch: 3}If $I, J \in \arch(A)$, then $I \vee J = \ar{I + J}$.
\item \label{arch: 4} If $I,J \in \arch(A)$ and $I \vee J = A$, then $I + J = A$.
\end{enumerate}
\end{lemma}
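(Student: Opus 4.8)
The plan is to establish the four parts in the order stated, observing at the outset that (\ref{arch: 4}) falls out immediately once (\ref{arch: 2}) and (\ref{arch: 3}) are in hand. The only part requiring genuine lattice manipulation is (\ref{arch: 1}); the remaining three amount to bookkeeping with the archimedean hull and the frame structure of $\arch(A)$.

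For (\ref{arch: 1}) I would start from $1 = x + y$ with $x \in I$ and $y \in J$ (available since $I + J = A$) and then correct $x,y$ to nonnegative representatives. The candidates are $a = x^+ \wedge 1$ and $b = 1 - a$. Since $0 \le a \le x^+$ and $x^+ = x \vee 0 \in I$ (as $I$ is an $\ell$-ideal), the $\ell$-ideal property yields $a \in I$; and $0 \le a \le 1$ forces $0 \le b$ with $a + b = 1$. The one computation to carry out is $b = 1 - (x^+ \wedge 1) = (1 - x^+) \vee 0$, using the translation and negation identities of Remark~\ref{rem: l-ring properties}. Combined with $1 - x^+ \le 1 - x = y$, this gives $0 \le b \le y^+ \in J$, so $b \in J$, as needed. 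The truncation by $1$ is the crucial choice: naively replacing $x$ by $x^+$ would destroy $x + y = 1$, whereas $x^+ \wedge 1$ is engineered precisely so that its complement is squeezed below $y^+$.

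For (\ref{arch: 2}) I would invoke the explicit description of the archimedean hull from Proposition~\ref{prop: archimedean hull}. As $\ar{I}$ is a ring ideal, $\ar{I} = A$ is equivalent to $1 \in \ar{I}$, which by that description means $(n \cdot 1 - 1)^+ \in I$ for every $n \ge 1$; taking $n = 2$ gives $1 \in I$, so $I = A$. For (\ref{arch: 3}), both $I \vee J$ (the join in the frame $\arch(A)$) and $\ar{I + J}$ are by definition intersections of families of archimedean $\ell$-ideals: the former over those $K$ with $I \cup J \subseteq K$, the latter over those $K$ with $I + J \subseteq K$. For any ring ideal $K$ one has $I \cup J \subseteq K$ iff $I + J \subseteq K$, so the two families coincide and the intersections agree. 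Finally, for (\ref{arch: 4}): if $I \vee J = A$, then $\ar{I + J} = A$ by (\ref{arch: 3}); since the sum of two $\ell$-ideals is again an $\ell$-ideal, part (\ref{arch: 2}) applies to $I + J$ and gives $I + J = A$.

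The main obstacle, such as it is, lies entirely in (\ref{arch: 1}): producing a nonnegative decomposition summing to $1$ while keeping the two summands in their respective ideals. Everything afterward is routine once the characterization of $\ar{-}$ and the fact that meets in $\arch(A)$ are intersections are taken as given, and I would suppress those verifications. I would also keep in mind that the sum of $\ell$-ideals being an $\ell$-ideal (standard for $f$-rings) is what licenses the clean reduction in (\ref{arch: 4}).
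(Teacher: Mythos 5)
Your proof is correct and takes essentially the same approach as the paper's: the truncation-at-$1$ trick in (1) (the paper sets $a = 1 \wedge |x|$ and verifies $b \in J$ by passing to the quotient $A/J$, whereas you set $a = 1 \wedge x^+$ and bound $b = (1-x^+)\vee 0$ by $y^+$ directly --- an interchangeable variant), the same use of Proposition~\ref{prop: archimedean hull} with $n=2$ in (2), the same identification of $I \vee J$ and $\ar{I+J}$ via their common family of archimedean upper bounds in (3), and the same reduction in (4). Your explicit remark that $I+J$ is an $\ell$-ideal, which is what licenses applying (2) in part (4), is a detail the paper leaves implicit.
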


\begin{proof}
(\ref{arch: 1}) Since $I + J = A$, there are $x \in I$ and $y \in J$ with $1 = x + y$. Since $y \in J$, we have $1 + J = x + J \le |x| + J$. Set $a = 1 \wedge |x|$. Then $0 \le a \le 1$ and $a \in I$ because $x \in I$, so $|x| \in I$. Therefore, $a + J = (1 + J) \wedge (|x| + J) = 1 + J$. Thus, $b := 1-a \in J$. Clearly $a + b = 1$ and $0 \le b$ since $a \le 1$.

(\ref{arch: 2}) Suppose $\ar{I} = A$. Then $1 \in \ar{I}$, so $(n\cdot 1-1)^+ \in I$ for each $n \ge 1$ by Proposition~\ref{prop: archimedean hull}. In particular, $(2\cdot 1-1)^+ \in I$. Thus, $1 \in I$, and so $I = A$.

(\ref{arch: 3}) Since $I + J \subseteq I \vee J$ and $I \vee J$ is archimedean, we have $\ar{I+J} \subseteq I \vee J$. On the other hand, $\ar{I+J}$ is an archimedean ideal which contains both $I$ and $J$, so it contains $I \vee J$. Thus, $I \vee J = \ar{I + J}$.

(\ref{arch: 4}) This follows from (\ref{arch: 2}) and (\ref{arch: 3}).
\end{proof}

Let $L$ be a frame and $B$ its free boolean extension. We recall that for $a \in L$, we write $a^*$ for the pseudocomplement of $a$ in $L$. On the other hand, we write $\lnot a$ for the complement of $a$ in $B$.

\begin{lemma} \label{lem: joins in D}
Let $A \in \bal$ and $B$ be the free boolean extension of $\arch(A)$. 
\begin{enumerate}
\item \label{joins in D: 1} If $I \in \arch(A)$, then $x_I = \bigvee \{ x_{\lnot J} \mid J \vee I = A\}$. 
\item \label{joins in D: 2} If $0 \le f \in D(\mathbb{R}[B])$, then there are $0 \le r_I \in \mathbb{R}$ with $f = \bigvee \{r_I x_{\lnot I} \mid I \in \arch(A) \}$.
\item \label{joins in D: 3} Let $0 \le f \in D(\mathbb{R}[B])$ and $0 \le t \in \mathbb{R}$. If $f = \bigvee \{ r_I x_{\lnot I} \mid I \in \arch(A) \}$ with $r_I \ge 0$, then $f + t = \bigvee \{ (t + r_I)x_{\lnot I} \mid I \in \arch(A)\}$.
\end{enumerate}
\end{lemma}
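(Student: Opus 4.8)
The plan is to prove part~(1) first, since parts~(2) and~(3) both reduce to it. For (1), the inequality $x_{\lnot J}\le x_I$ whenever $J\vee I=A$ is immediate: the map $x_{(-)}\colon B\to\Id(\mathbb{R}[B])$ is an order isomorphism (Theorem~\ref{thm: UMP}(\ref{UMP: 1})), and in $B$ one has $\lnot J\le I$ iff $J\vee I=A$; hence $x_I$ is an upper bound of $\{x_{\lnot J}\mid J\vee I=A\}$ (this family is nonempty, as $J=A$ contributes $x_{\lnot A}=x_0=0$). For the reverse, I would take an arbitrary upper bound $g$, replace it by $g_1:=x_I\wedge g$ (still an upper bound, with $0\le g_1\le x_I$), and aim to show $g_1=x_I$. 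The key is regularity of $\arch(A)$ (Theorem~\ref{thm: arch frame}): for each $K\prec I$ we have $K^*\vee I=A$, so taking $J=K^*$ gives $x_{\lnot K^*}\le g_1$; since $x_K\wedge x_{K^*}=x_{K\wedge K^*}=x_0=0$ this yields $x_K\le x_{\lnot K^*}\le g_1$. As $K\prec I$ forces $K\le I$, we get $x_I-g_1\le x_I-x_K=x_{I\wedge\lnot K}$ for every $K\prec I$.

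So everything comes down to showing $\bigwedge_{K\prec I}x_{I\wedge\lnot K}=0$ in $D(\mathbb{R}[B])$; this is the main obstacle, since the embedding of $B$ into the idempotents of $D(\mathbb{R}[B])$ need not preserve infinite meets, so I cannot simply invoke the frame identity $\bigvee\{K\mid K\prec I\}=I$. Instead I would use join-density of $\mathbb{R}[B]$ in $D(\mathbb{R}[B])$: if $0\le w:=x_I-g_1$ were nonzero, density would produce a nonzero $0\le c\in\mathbb{R}[B]$ with $c\le w$, and the Specker normal form (Remark~\ref{rem: Specker facts}) together with Remark~\ref{rem: idempotent facts}(\ref{idempotent facts: 1}) would extract a nonzero idempotent $x_b$ with $x_b\le x_{I\wedge\lnot K}$, i.e.\ $b\le I$ and $b\wedge K=0$, for all $K\prec I$. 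Writing $b$ as a finite join of terms $P\wedge\lnot Q$ with $P,Q\in\arch(A)$, choosing a nonzero summand, and replacing $P$ by $P\wedge I\le I$ (legitimate since that summand lies below $b\le I$), regularity of $\arch(A)$ yields some $K_0\prec P\le I$, hence $K_0\prec I$, with $K_0\not\le Q$; then $0\ne K_0\wedge\lnot Q\le b\wedge K_0$, contradicting $b\wedge K_0=0$. This forces $w=0$, so $x_I=g_1\le g$ and (1) follows.

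For (2), I would use join-density to write $f$ as a join of nonnegative $c\in\mathbb{R}[B]$, put each $c$ in normal form $\sum r_ix_{b_i}$ with $r_i\ge0$ and pairwise disjoint $b_i$ (Remark~\ref{rem: Specker facts}), and note $\sum r_ix_{b_i}=\bigvee r_ix_{b_i}$, so that $f$ is a join of terms $rx_b$ with $r\ge0$. Expanding each $b$ as a finite join of elements $\lnot I\wedge J$ and using $x_{\lnot I\wedge J}=x_{\lnot I}\wedge x_J$ together with part~(1), namely $x_J=\bigvee\{x_{\lnot K}\mid K\vee J=A\}$, the infinite distributive law (Remark~\ref{rem: l-ring properties}(\ref{l-ring: vee wedge a})) rewrites each term as a join of $r(x_{\lnot I}\wedge x_{\lnot K})=r\,x_{\lnot(I\vee K)}$. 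Thus $f$ becomes a join of terms $r\,x_{\lnot M}$ with $r\ge0$; collecting, for each $M$, the supremum $r_M$ of the coefficients attached to $x_{\lnot M}$ (bounded by Remark~\ref{rem: idempotent facts}(\ref{idempotent facts: 1})) and applying Remark~\ref{rem: l-ring properties}(\ref{l-ring: join times scalar}) gives $f=\bigvee\{r_Mx_{\lnot M}\mid M\in\arch(A)\}$.

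For (3), the inequality $\bigvee\{(t+r_I)x_{\lnot I}\}\le f+t$ is routine: $t\,x_{\lnot I}\le t$ since $0\le x_{\lnot I}\le1$ and $t\ge0$, so $(t+r_I)x_{\lnot I}\le t+r_Ix_{\lnot I}\le t+f$. For the reverse, writing $g=\bigvee\{(t+r_I)x_{\lnot I}\}$ and using $x_{\lnot I}+x_I=1$, I would decompose
\[
r_Ix_{\lnot I}+t=(t+r_I)x_{\lnot I}+t\,x_I=(t+r_I)x_{\lnot I}\vee t\,x_I,
\]
the last step because the two summands are disjoint nonnegative elements (Lemma~\ref{rem: re meet sf}(\ref{re meet sf: 1}) gives $(t+r_I)x_{\lnot I}\wedge t\,x_I=0$). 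Here $(t+r_I)x_{\lnot I}\le g$ is a defining term of $g$, while part~(1) gives $t\,x_I=\bigvee\{t\,x_{\lnot J}\mid J\vee I=A\}\le g$, since each $t\,x_{\lnot J}\le(t+r_J)x_{\lnot J}\le g$. Hence $r_Ix_{\lnot I}+t\le g$ for every $I$, and $f+t=\bigvee\{r_Ix_{\lnot I}+t\}$ by Remark~\ref{rem: l-ring properties}(\ref{l-ring: vee + a}) yields $f+t\le g$, completing (3). The whole argument thus hinges on (1), whose meet-vanishing step is the one genuine difficulty.
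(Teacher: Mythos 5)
Your proof is correct, and parts (2) and (3) proceed essentially as in the paper: for (2), join-density of $\mathbb{R}[B]$, the Specker normal form, the expansion of $b \in B$ into terms $J \wedge \lnot I$, and part (1) plus distributivity; for (3), the decomposition $r_I x_{\lnot I} + t = (t+r_I)x_{\lnot I} \vee t\,x_I$ with part (1) absorbing the $t\,x_I$ term (your two-inequality organization versus the paper's chain of equalities is immaterial). The genuine divergence is in part (1), and it is to your credit. The paper argues: by regularity, $I = \bigvee\{K \mid K \prec I\}$ in $\arch(A)$; each such $K$ lies below $\lnot K^{*}$ with $K^{*} \vee I = A$; each $\lnot J$ with $J \vee I = A$ lies below $I$; hence $I = \bigvee\{\lnot J \mid J \vee I = A\}$ and therefore $x_I = \bigvee\{x_{\lnot J}\}$ in $D(\mathbb{R}[B])$. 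This sandwich argument shows the families $\{x_{\lnot J}\}$ and $\{x_K \mid K \prec I\}$ have the same upper bounds, but to conclude one still must know that $x_I$ is the \emph{least} upper bound of $\{x_K \mid K \prec I\}$ in $D(\mathbb{R}[B])$ --- that is, that the regularity join computed in the frame $\arch(A)$ survives the passage through $B$ and $\Id(\mathbb{R}[B])$ into the Dedekind completion, which is exactly the obstruction you flag, since these embeddings need not preserve infinite joins. Your argument supplies the missing justification: a nonzero $0 \le w \le x_{I \wedge \lnot K}$ (for all $K \prec I$) would, by join-density, dominate a nonzero positive element of $\mathbb{R}[B]$, hence (via the Specker normal form and Remark~\ref{rem: idempotent facts}(\ref{idempotent facts: 1})) a nonzero idempotent $x_b$ with $b \le I$ and $b \wedge K = 0$ for all $K \prec I$; writing $b$ in normal form in $B$ and applying regularity \emph{inside} $\arch(A)$ (where the frame identity is legitimately available) yields a contradiction. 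So your route is not a different theorem but a completion of the paper's own proof: it costs the extra density-and-normal-form paragraph, and it buys a rigorous reduction of a join computed in $D(\mathbb{R}[B])$ to purely frame-theoretic facts about $\arch(A)$, a step the paper's ``hence'' leaves implicit.
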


\begin{proof}
(\ref{joins in D: 1}) Since $\arch(A)$ is a regular frame (see Theorem~\ref{thm: arch frame}),
\[
I = \bigvee \{ K \mid K \prec I\} = \bigvee \{ K \mid K^* \vee I = A\}. 
\]
We show that $I = \bigvee \{ \lnot J \mid J \vee I = A\}$. The right-to-left inclusion is clear. For the left-to-right inclusion it is sufficient to show that if $K^* \vee I = A$, then $K \le \lnot J$ for some $J$ with $J \vee I = A$. But if we set $J = K^*$, then $K \subseteq K^{**} = J^* \le \lnot J$. Thus, $I = \bigvee \{ \lnot J \mid J \vee I = A\}$, and hence $x_I = \bigvee \{ x_{\lnot J} \mid J \vee I = A\}$.

(\ref{joins in D: 2}) Each element of $D(\mathbb{R}[B])$ is a join from $\mathbb{R}[B]$. A nonnegative element of $\mathbb{R}[B]$ can be written in the form $r_1 x_{b_1} + \cdots + r_n x_{b_n} = r_1 x_{b_1} \vee \cdots \vee r_n x_{b_n}$ for some $0 \le r_i \in \mathbb{R}$ and $b_1, \dots, b_n \in B$ with $b_i \wedge b_j = 0$ whenever $i \ne j$ (see Remark~\ref{rem: Specker facts}). Since each $b \in B$ is a finite join of elements of the form $J \wedge \lnot I$ with $I, J \in \arch(A)$, we may write a nonnegative element of $\mathbb{R}[B]$ as a join of elements of the form $r(x_J \wedge x_{\lnot I})$. Thus, by (\ref{joins in D: 1}), if $0 \le f \in D(\mathbb{R}[B])$, we may write $f$ as a join of elements of the form $rx_{\lnot I}$ with $I \in \arch(A)$.

(\ref{joins in D: 3})
By Remark~\ref{rem: Specker facts}(\ref{Specker facts: 2},\ref{Specker facts: 3}), $x_I + x_{\lnot I} = x_I \vee x_{\lnot I} = 1$. Therefore, by Remark~\ref{rem: l-ring properties}(\ref{l-ring: vee + a}), 
\[
f + t = \bigvee \{ t + r_I x_{\lnot I} \mid I \in \arch(A) \} = \bigvee \{ (t + r_I)x_{\lnot I} + tx_I \mid I \in \arch(A) \}. 
\]
Because $x_{\lnot I} \wedge x_I = 0$, Remark~\ref{rem: l-ring properties}(\ref{l-ring: disjoint}) implies $(t + r_I)x_{\lnot I} \wedge tx_I = 0$, so $(t + r_I)x_{\lnot I} + tx_I = (t + r_I)x_{\lnot I} \vee tx_I$. Thus, by (\ref{joins in D: 1}),
\begin{align*}
f + t &= \bigvee \{  (t + r_I)x_{\lnot I} \vee tx_I \mid I \in \arch(A) \} \\
&= \bigvee \{ (t + r_I)x_{\lnot I} \vee tx_{\lnot J} \mid I, J \in \arch(A), I \vee J = A\}.
\end{align*}
Now, $t \le t + r_{J}$, so $tx_{\lnot J} \le (t+r_{J})x_{\lnot J}$. Consequently, $f + t = \bigvee \{  (t + r_I)x_{\lnot I} \mid I \in \arch(A)\}$.
\end{proof}

\begin{remark} \label{rem: [S]}
Let $A\in\bal$. For $S\subseteq A$ we let $\el{S}$ be the $\ell$-ideal of $A$ generated by $S$. It is well known (see, e.g., \cite[p.~96]{LZ71}) that
\[
\el{S} = \{ x \in A : |x| \le n|a| \textrm{ for some } n \ge 1, a \in S\}.
\]
If $S = \{a\}$, we write $\el{a}$ for $\el{S}$.
\end{remark}

\begin{lemma} \label{lem: I = A}
Let $A \in \bal$ and let $X = \arch(A) \setminus \{A\}$.
\begin{enumerate}
\item \label{I = A: 1} If $a, b \in A$ with $a < b$ and $b - a \in \mathbb{R}$, then $\ar{b^+, a^-} = A$.
\item \label{I = A: 2} If $I,J \in X$ with $I \not\subseteq J$, then there is $K \in X$ with $J \subseteq K$ and $K + I = A$.
\end{enumerate}
\end{lemma}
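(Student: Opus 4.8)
The plan is to treat the two parts separately: part (\ref{I = A: 1}) is a short order computation, while part (\ref{I = A: 2}) is the substantive claim, which I would extract from the regularity of the frame $\arch(A)$ provided by Theorem~\ref{thm: arch frame}.

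For part (\ref{I = A: 1}), I would set $\varepsilon = b - a$, a \emph{positive} real since $a < b$, so that $b = a + \varepsilon$. The observation is that $b^+ = b \vee 0 \ge b = a + \varepsilon$ and $a^- = (-a) \vee 0 \ge -a$, whence the nonnegative element $c := b^+ + a^-$ satisfies $c \ge (a+\varepsilon) + (-a) = \varepsilon$. Now $\ar{b^+, a^-}$ is an $\ell$-ideal containing both $b^+$ and $a^-$, hence contains $c$. Choosing an integer $n \ge 1/\varepsilon$ gives $1 \le n\varepsilon \le nc = |nc|$, so $1 \in \ar{b^+, a^-}$ by the description of $\ell$-ideals in Remark~\ref{rem: [S]}. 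Therefore $\ar{b^+, a^-} = A$. (Equivalently, one first checks $\el{b^+, a^-} = A$ and then passes to the archimedean hull.)

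For part (\ref{I = A: 2}), the key idea is to use the well-inside relation $\prec$ to separate $I$ from $J$. Since $I \not\subseteq J$, that is $I \not\le J$ in $\arch(A)$, and since $I = \bigvee \{ C \mid C \prec I\}$ by regularity, the terms cannot all lie below $J$, for otherwise the join $I$ would satisfy $I \le J$. So I may fix $C \in \arch(A)$ with $C \prec I$ and $C \not\subseteq J$. I then set $K := C^* \vee J$, where $C^*$ is the pseudocomplement of $C$ in $\arch(A)$. Clearly $J \subseteq K$, and since $C \prec I$ means $C^* \vee I = A$, I get $K \vee I = C^* \vee J \vee I = A$. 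To see $K \ne A$: if instead $C^* \vee J = A$, then by definition $C \prec J$, and $a \prec b$ always forces $a \le b$ (because $a = a \wedge (a^* \vee b) = (a \wedge a^*) \vee (a \wedge b) = a \wedge b$), giving $C \subseteq J$ and contradicting the choice of $C$. Thus $K \in X$. Finally, since $K$ and $I$ are archimedean with $K \vee I = A$, Lemma~\ref{lem: arch}(\ref{arch: 4}) upgrades this to $K + I = A$, as required.

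The routine facts (that $c \ge \varepsilon$, and that a join of elements all below $J$ stays below $J$) are immediate. The one step needing the right idea is the construction of $K$ in part (\ref{I = A: 2}): the separation must come from $\prec$, and the choice $K = C^* \vee J$ is exactly what makes $K \vee I = A$ automatic while keeping $K$ proper. I expect the main obstacle to be recognizing that $C \prec J \Rightarrow C \subseteq J$ is precisely what forces $K \ne A$; everything else is bookkeeping with the frame operations and the translation between $\vee$ and $+$ via Lemma~\ref{lem: arch}.
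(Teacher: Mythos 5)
Your proof is correct, and part (\ref{I = A: 2}) takes a genuinely different route from the paper's. For part (\ref{I = A: 1}) the two arguments are essentially the same short computation, packaged differently: the paper notes $a^+ \le b^+$ and $b^- \le a^-$, so both $a$ and $b$ lie in $\ar{b^+, a^-}$, whence the nonzero real number $b-a$ (a unit) does too; you instead observe $b^+ + a^- \ge b - a > 0$ and conclude $1$ lies in the ideal. One minor imprecision: Remark~\ref{rem: [S]} as stated bounds $|x|$ by a multiple of a \emph{single} element of $S$, and $b^+ + a^-$ is not an element of $\{b^+, a^-\}$; what you actually need is only that $\ar{b^+,a^-}$ is an $\ell$-ideal containing $b^+ + a^-$ together with the defining solidity property of $\ell$-ideals, which is exactly what your argument supplies, so this is harmless. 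For part (\ref{I = A: 2}) the paper argues element-wise: it picks $a \in I \setminus J$, uses Proposition~\ref{prop: archimedean hull} to find $n$ with $(n|a|-1)^+ \notin J$, sets $K = J \vee \ar{(n|a|-1)^-}$, obtains $I \vee K = A$ from part (\ref{I = A: 1}), and then shows $K \ne A$ by a computation with witnesses ($1 = x+y$ with $x \in J$ forces $(n|a|-1)^+ = (n|a|-1)^+x \in J$ via $y(n|a|-1)^+ = 0$, a contradiction). You instead work purely lattice-theoretically in the frame $\arch(A)$: regularity (Theorem~\ref{thm: arch frame}) yields $C \prec I$ with $C \not\subseteq J$, and $K = C^* \vee J$ works because $C^* \vee I = A$ makes $K \vee I = A$ automatic, while $C \prec J \Rightarrow C \subseteq J$ rules out $K = A$; Lemma~\ref{lem: arch}(\ref{arch: 4}) then upgrades $K \vee I = A$ to $K + I = A$. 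Your argument is shorter, decouples part (\ref{I = A: 2}) from part (\ref{I = A: 1}), and leans entirely on Banaschewski's theorem that $\arch(A)$ is a compact regular frame; since the paper already invokes that theorem's regularity in the appendix (Lemma~\ref{lem: joins in D}(\ref{joins in D: 1})), this introduces no new dependency or choice principle. What the paper's proof buys in exchange is explicitness --- it constructs $K$ directly from the archimedean-hull description without appealing to pseudocomplements, in keeping with the computational style of the rest of the appendix.
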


\begin{proof}
(\ref{I = A: 1}) Set $I =\ar{b^+, a^-}$. Since $0 \le a^+ \le b^+$, we have $a^+ \in I$, so $a = a^+ - a^- \in I$. Also, as $0 \le b^- \le a^-$, we have $b^- \in I$, and so $b \in I$. Thus, $b - a \in I$, and since $b-a$ is a nonzero real number, it is a unit in $A$, and hence $I = A$.

(\ref{I = A: 2}) Since $I \not\subseteq J$ there is $a \in I$ with $a \notin J$. Because $J$ is archimedean, by Proposition~\ref{prop: archimedean hull}, there is $n\ge 1$ with $(n|a|-1)^+ \notin J$. Let $K =  J \vee \ar{(n|a|-1)^-}$. Then $J \subseteq K$, and $I \vee K = A$ by (\ref{I = A: 1}). We show that $K \in X$. Otherwise $1 = x + y$ with $0 \le x, y$, $x \in J$, and $y \in \ar{(n|a|-1)^-}$. We claim that $y (n|a| - 1)^+ = 0$. To see this, we set $b = n|a|-1$. Since $y \in \ar{b^-}$, we have $(y-1/p)^+ \in \el{b^-}$ for each $p \ge 1$ by Proposition~\ref{prop: archimedean hull}. Therefore, by Remark~\ref{rem: [S]}, for each $p$ there is $m$ with $(y-1/p)^+ \le mb^-$. Thus, $0 \le (y-1/p)^+ b^+ \le mb^- b^+ = 0$ by Remark~\ref{rem: l-ring properties}(\ref{l-ring: plus meet minus}), and so $(y - 1/p)^+ b^+ = 0$. Because $y-1/p \le (y-1/p)^+$, we have $(y-1/p)b^+ \le (y-1/p)^+ b^+ = 0$, so $yb^+ \le (1/p) b^+$, which yields $pyb^+ \le b^+$. Since this is true for all $p \ge 1$, it follows that $yb^+ = y(|n|a-1)^+ = 0$ as $A$ is archimedean. This verifies the claim.
Therefore, $(n|a| - 1)^+  = (n|a| - 1)^+ (x + y) = (n|a| - 1)^+ x$, and so $(n|a| - 1)^+ 
\in J$, which is a contradiction. Thus, $K \in X$.
\end{proof}

\begin{lemma} \label{lem: f below g}
Let $A \in \bal$, $B$ be the free boolean extension of $\arch(A)$,  $X = \arch(A) \setminus \{A\}$, $I \in X$, 
$0 \le f,g \in D(\mathbb{R}[B])$, and $0 \le t \in \mathbb{R}$. Suppose that whenever $tx_{\lnot I} \le f$, there is $K \in X$ with $I \subseteq K$ and $tx_{\lnot K} \le g$. Then $f \le g$.
\end{lemma}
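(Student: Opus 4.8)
The plan is to prove $f \le g$ by contradiction, reducing it to a single ``witness'' idempotent against which the hypothesis can be turned. First I would invoke Lemma~\ref{lem: joins in D}(\ref{joins in D: 2}): since $0 \le f$, it is a join of terms $s\,x_{\lnot L}$ with $s \ge 0$ and $L \in \arch(A)$, and since each such term lies below $f$ we in fact have $f = \bigvee\{s\,x_{\lnot L} \mid s \ge 0,\ L \in \arch(A),\ s\,x_{\lnot L} \le f\}$. Hence it suffices to show $s\,x_{\lnot L} \le g$ for every term of this join. If this failed, there would be $s_0 > 0$ and $L_0 \in X$ (note $x_{\lnot A} = 0$, so necessarily $L_0 \ne A$) with $s_0 x_{\lnot L_0} \le f$ but $s_0 x_{\lnot L_0} \not\le g$.

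Next I would localize the failure. Put $h = (s_0 x_{\lnot L_0} - g)^+$, so $h \ne 0$ and $0 \le h \le s_0 x_{\lnot L_0}$. Applying Lemma~\ref{lem: joins in D}(\ref{joins in D: 2}) to $h$ and using $h \ne 0$, I can select a nonzero summand $\delta\,x_{\lnot P} \le h$ with $\delta > 0$ and $P \in X$. From $\delta x_{\lnot P} \le h \le s_0 x_{\lnot L_0}$ and Remark~\ref{rem: idempotent facts}(\ref{idempotent facts: 1}) I read off $x_{\lnot P} \le x_{\lnot L_0}$, equivalently $L_0 \subseteq P$.

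The key step is to feed $P$ --- not $L_0$ --- into the hypothesis. Since $L_0 \subseteq P$ gives $x_{\lnot P} \le x_{\lnot L_0}$, I have $s_0 x_{\lnot P} \le s_0 x_{\lnot L_0} \le f$, so the hypothesis (with $I = P$ and $t = s_0$) yields $K \in X$ with $P \subseteq K$ and $s_0 x_{\lnot K} \le g$. Writing $h = s_0 x_{\lnot L_0} - (s_0 x_{\lnot L_0} \wedge g)$ and multiplying by the positive idempotent $x_{\lnot K}$, I would use that multiplication by a positive element preserves finite meets (standard for $f$-rings), together with $x_{\lnot L_0} x_{\lnot K} = x_{\lnot K}$ (because $L_0 \subseteq K$) and $s_0 x_{\lnot K} \le g x_{\lnot K}$, to collapse everything to $h\,x_{\lnot K} = s_0 x_{\lnot K} - s_0 x_{\lnot K} = 0$. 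On the other hand $P \subseteq K$ gives $x_{\lnot K} \le x_{\lnot P}$, whence $\delta x_{\lnot K} = \delta x_{\lnot P} x_{\lnot K} \le h\,x_{\lnot K} = 0$; as $\delta > 0$ this forces $x_{\lnot K} = 0$, i.e.\ $K = A$, contradicting $K \in X$. This contradiction shows every term $s\,x_{\lnot L} \le f$ lies below $g$, so $f \le g$.

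The hard part is conceptual rather than computational: the hypothesis always returns a \emph{larger} ideal $K \supseteq I$, hence a \emph{smaller} idempotent $x_{\lnot K} \le x_{\lnot I}$, so one cannot directly enlarge $L_0$ to reach $g$. The trick that makes the direction work is to extract a summand $\delta x_{\lnot P}$ of the positive part $(s_0 x_{\lnot L_0} - g)^+$ and apply the hypothesis at $P$: the returned $K$ simultaneously dominates $g$ on its support and lies above $P$, which is exactly the tension needed to annihilate the witness and force $K = A$.
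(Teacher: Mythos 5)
Your proof is correct, and it takes a genuinely different route from the paper's. The paper argues directly and term by term: given $t x_{\lnot I} \le f$, the hypothesis yields $K \supseteq I$ with $t x_{\lnot K} \le g$, and the gap between the idempotents $x_{\lnot K} \le x_{\lnot I}$ is closed by a covering argument --- one proves $I = K \cap \bigcap\{J'\}$, where the $J'$ arise from re-applying the hypothesis to auxiliary ideals $I \vee \ar{(a-1/n)^-}$ for $a \in K \setminus I$, which uses the archimedean-hull machinery of Lemmas~\ref{lem: arch} and~\ref{lem: I = A} and Remark~\ref{rem: [S]}, and then converts this identity of ideals into the join decomposition $t x_{\lnot I} = t x_{\lnot K} \vee \bigvee\{t x_{\lnot J'}\} \le g$. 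You instead argue by contradiction and replace the covering argument by a multiplicative annihilation: the excess $h = (s_0 x_{\lnot L_0} - g)^+$ is nonzero, a nonzero summand $\delta x_{\lnot P} \le h$ supplied by Lemma~\ref{lem: joins in D}(\ref{joins in D: 2}) satisfies $L_0 \subseteq P$ by Remark~\ref{rem: idempotent facts}(\ref{idempotent facts: 1}), the hypothesis applied at $(s_0, P)$ gives $K$, and multiplying by $x_{\lnot K}$ kills $h$, yet $\delta x_{\lnot K} = \delta x_{\lnot P} x_{\lnot K} \le h x_{\lnot K} = 0$ forces $x_{\lnot K} = 0$, i.e.\ $K = A$, contradicting $K \in X$. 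Your computations all check out; note only that the distribution of multiplication by $x_{\lnot K}$ over the meet in $h = s_0 x_{\lnot L_0} - (s_0 x_{\lnot L_0} \wedge g)$ need not be imported from general $f$-ring theory, since all elements involved are nonnegative and the paper's Remark~\ref{rem: l-ring properties}(\ref{l-ring: multiplication}) already gives it; the identities $x_{\lnot L_0} x_{\lnot K} = x_{\lnot K}$ (from $L_0 \subseteq P \subseteq K$) and $s_0 x_{\lnot K} = s_0 x_{\lnot K}^2 \le g x_{\lnot K}$ then finish the cancellation. As for what each approach buys: yours is shorter, bypasses the appendix's ideal-theoretic Lemmas~\ref{lem: arch} and~\ref{lem: I = A} entirely, and avoids the paper's passage from an infinite intersection of archimedean $\ell$-ideals to an infinite join of idempotents of $D(\mathbb{R}[B])$ --- a step requiring care since $B$ is not complete; the paper's proof, in exchange, is direct rather than by contradiction and exhibits an explicit join decomposition of $t x_{\lnot I}$ witnessing $t x_{\lnot I} \le g$. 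Finally, both proofs rely on reading the hypothesis as quantified over all pairs $(t, I)$, which is indeed how the paper invokes this lemma, so your application of it at $P$ rather than at $L_0$ is legitimate.
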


\begin{proof}
To show $f \le g$, by Lemma~\ref{lem: joins in D}(\ref{joins in D: 2}) we need to show that $tx_{\lnot I} \le f$ implies $tx_{\lnot I} \le g$. Given $t x_{\lnot I} \le f$, there is $K \supseteq I$ with $tx_{\lnot K} \le g$. If $K = I$, then we are done. Suppose $I \subset K$. For each $J \supseteq I$ with $J \vee K = A$ we have $tx_{\lnot J} \le tx_{\lnot I} \le f$, so there is $J' \supseteq J$ with $tx_{\lnot J'} \le g$. We have $J' \vee K = A$ since $J' \supseteq J$. We claim that
\[
I = K \cap \bigcap \{ J' \in X \mid J' \supseteq J, J' \vee K = A \}. 
\]
The inclusion $I \subseteq K \cap \bigcap \{J' \in X \mid J' \supseteq J, J' \vee K = A \}$ is clear since $I \subseteq J \subseteq J'$. For the reverse inclusion, let $a \in K \setminus I$. Then $|a| \in K \setminus I$, so we assume $0 \le a$. Since $I$ is archimedean, there is $n \ge 1$ with $(a - 1/n)^+ \notin I$. We show that $I \vee \ar{(a-1/n)^-} \ne A$. For, if $I \vee \ar{(a-1/n)^-} = A$, then $I + \el{(a-1/n)^-} = A$ by Lemma~\ref{lem: arch}. Therefore, by Lemma~\ref{lem: arch}(\ref{arch: 1}), there are $0 \le x,y$ with $x \in I$, $y \in \el{(a-1/n)^-}$, and $x + y = 1$. Thus, by Remark~\ref{rem: [S]}, we have $y \le m(a-1/n)^-$ for some $m \ge 1$, and hence $y(a-1/n)^+ = 0$ by Remark~\ref{rem: l-ring properties}(\ref{l-ring: plus meet minus}). Consequently, $(a-1/n)^+ = (a-1/n)^+x \in I$, a contradiction. Set $J = I \vee \ar{(a-1/n)^-}$. Because $\ar{a,(a-1/n)^-} = A$ by Lemma~\ref{lem: I = A}(\ref{I = A: 1}), we have $J \vee K = A$ since $a \in K$, and $a \notin J'$ because $J'$ is proper. Therefore, $a$ is not in the intersection. Thus, $I = K \cap \bigcap \{ J' \in X \mid J' \supseteq J, J' \vee K = A \}$ as desired. From this we obtain that in $B$ we have $\lnot I = \lnot K \vee \bigvee \{ \lnot J' \mid J' \supseteq J, J' \vee K = A \}$, and so
\[
tx_{\lnot I} = tx_{\lnot K} \vee \bigvee \{ tx_{\lnot J'} \mid J' \supseteq J, J' \vee K = A \} \le g.
\]
\end{proof}

We arrive at our final auxiliary lemma, item (\ref{not A: 2}) of which has the most involved proof.

\begin{lemma} \label{lem: not A}
Let $A \in \bal$, $X = \arch(A) \setminus \{A\}$, $0 \le a \in A$, and $I \in X$.
\begin{enumerate}
\item \label{not A: 1} If $s_I = \sup \{ r \mid (a-r)^- \in I \}$, then $(a - s_I)^- \in I$.
\item \label{not A: 2} $rx_{\lnot I} \le \alpha(a)$ iff $(a-r)^- \in I$.
\item \label{not A: 3} $s_I = \sup \{ r \mid rx_{\lnot I} \le \alpha(a) \}$ and $I \vee \ar{(a - s_I)^+} \ne A$. 
\end{enumerate}
\end{lemma}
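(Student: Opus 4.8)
The plan is to prove the three items in the order (1), then the second assertion of (3) (that $I \vee \ar{(a-s_I)^+} \ne A$), then (2), and finally the first assertion of (3); this is the order forced by the logical dependencies, since my argument for (2) localizes at the idempotent attached to $K := I \vee \ar{(a-s_I)^+}$. For (1), first I would observe that $R_I := \{ r \in \mathbb{R} \mid (a-r)^- \in I\}$ is downward closed (because $(a-r)^-$ is nondecreasing in $r$ and $I$ is an $\ell$-ideal) and contains $0$, while being bounded above by $\|a\|$: for $r > \|a\|$ we have $(a-r)^- \ge (r - \|a\|)\cdot 1$, so $(a-r)^- \in I$ would force $1 \in I$ and $I = A$, contrary to $I \in X$. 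Hence $s_I = \sup R_I \in [0, \|a\|]$. To see the supremum is attained, for each $n$ I take $s_I - 1/n \in R_I$ and set $w_n = (a - (s_I - 1/n))^-$; using $(u+v)^+ \le u^+ + v^+$ (Remark~\ref{rem: l-ring properties}(\ref{l-ring: inequality})) one checks $0 \le (a-s_I)^- - w_n \le 1/n$. Passing to the archimedean algebra $A/I$, where $w_n \equiv 0$, this gives $n\,((a-s_I)^- + I) \le 1 + I$ for all $n$, so $(a-s_I)^- \in I$ by Remark~\ref{rem: positive part}(\ref{positive part: 1}).

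The crux is the second assertion of (3). Writing $b = a - s_I$, I would argue by contradiction: if $I \vee \ar{b^+} = A$, then since both ideals are archimedean, Lemma~\ref{lem: arch}(\ref{arch: 4}) gives $I + \ar{b^+} = A$, and Lemma~\ref{lem: arch}(\ref{arch: 1}) produces $0 \le x \in I$ and $0 \le y \in \ar{b^+}$ with $x + y = 1$. From $y \in \ar{b^+}$ and Proposition~\ref{prop: archimedean hull} (applied to $\el{b^+}$) one gets $(2y-1)^+ \in \el{b^+}$, hence $(2y-1)^+ \le m\,b^+$ for some $m \ge 1$ by Remark~\ref{rem: [S]}; substituting $y = 1-x$ rewrites this as $(1-2x)^+ \le m\,b^+$. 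Setting $\delta = 1/(2m)$, the key $\ell$-algebra computation is
\[
(\delta - b^+)^+ \le \left(\delta - \tfrac{1}{m}(1-2x)^+\right)^+ = \tfrac{1}{m}\left(\tfrac12 - (1-2x)^+\right)^+ = \tfrac1m\left((2x - \tfrac12)^+ \wedge \tfrac12\right) \le \tfrac{2}{m}\,x,
\]
where the first inequality uses $b^+ \ge \tfrac1m(1-2x)^+$ with monotonicity of $(\cdot)^+$, and the middle equalities are distributive-lattice manipulations. Since $x \in I$, this yields $(\delta - b^+)^+ \in I$, and as $(\delta - b)^+ \le (\delta - b^+)^+ + b^-$ with $b^- = (a-s_I)^- \in I$ from (1), we obtain $(a-(s_I+\delta))^- = (\delta - b)^+ \in I$. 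Because $\delta > 0$, this contradicts $s_I = \sup R_I$. This inequality, which converts membership of $y$ in the archimedean hull into the impossibility of a partition of unity straddling $s_I$, is the step I expect to be the main obstacle.

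For (2), the implication $(a-r)^- \in I \Rightarrow r x_{\lnot I} \le \alpha(a)$ is immediate from Lemma~\ref{lem: preservation of adding a scalar}(\ref{preservation of adding a scalar: 1}) when $r > 0$ and trivial when $r \le 0$ (as $\alpha(a) \ge 0$). For the converse I would use the idempotent $x_{\lnot K}$, where $K = I \vee \ar{b^+} \ne A$ by the step above. Since $b^+ \in K$ and $b^- \in I \subseteq K$, we have $a + K = s_I + K$. Multiplying $r x_{\lnot I} \le \alpha(a)$ by $x_{\lnot K}$ and using $x_{\lnot I}x_{\lnot K} = x_{\lnot K}$ (as $I \subseteq K$) reduces the claim to $\alpha(a)\,x_{\lnot K} \le s_I\,x_{\lnot K}$; expanding $\alpha(a)$ and distributing (Remark~\ref{rem: l-ring properties}(\ref{l-ring: multiplication})), each surviving term is $s\,x_{\lnot(J\vee K)}$ with $J \vee K \ne A$ and $(a-s)^- \in J \subseteq J\vee K$, and computing in the nonzero algebra $A/(J\vee K)$ with $a + K = s_I + K$ forces $s \le s_I$. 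Hence $r\,x_{\lnot K} \le s_I\,x_{\lnot K}$, and since $x_{\lnot K} \ne 0$ this gives $r \le s_I$, i.e. $(a-r)^- \in I$ because $R_I = (-\infty, s_I]$. The first assertion of (3) then follows at once: by (2) the set $\{ r \mid r x_{\lnot I} \le \alpha(a)\}$ coincides with $R_I$, so its supremum is $s_I$.
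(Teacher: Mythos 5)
Your proof is correct, but it takes a genuinely different route from the paper's. The paper proves the parts in the order (1), (2), (3): its converse direction of (2) is a self-contained but intricate computation --- writing $\alpha(a)=\bigvee\{s_Jx_{\lnot J}\mid J\in\arch(A)\}$, meeting with $r$, running an $\varepsilon$-argument that uses Lemma~\ref{rem: re meet sf}(\ref{re meet sf: 3}) and Remark~\ref{rem: idempotent facts}(\ref{idempotent facts: 2}), and intersecting a family of archimedean $\ell$-ideals inside $B$ to conclude $(a-(r-\varepsilon))^-\in I$ for every $\varepsilon>0$, whence $(a-r)^-\in I$ by archimedeanness of $A/I$ --- and part (3) is then deduced from (1) and (2) via a partition-of-unity contradiction. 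You invert this: you first prove the second assertion of (3) using only (1), and then exploit the proper ideal $K=I\vee\ar{(a-s_I)^+}$ to localize the converse of (2) at $x_{\lnot K}$, where it reduces to comparing scalars in the nonzero quotients $A/(J\vee K)$; the first assertion of (3) is then immediate. Your proof of (3)'s second assertion is close in spirit to the paper's (both derive a contradiction from a partition $x+y=1$), but the algebra differs: the paper reduces $I\vee\ar{(a-s)^+}=A$ to $I+\el{(a-s)^+}=A$, gets $y\le n(a-s)^+$ directly, and uses the identity $(1/n-(a-s)^+)^+=(a-(s+1/n))^-\wedge 1/n$, while you keep $y\in\ar{b^+}$ (where $b=a-s_I$), extract $(2y-1)^+\le mb^+$ from Proposition~\ref{prop: archimedean hull}, and run the chain ending in $(\delta-b^+)^+\le \tfrac{2}{m}x$; both computations are valid. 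What your route buys is a much shorter and more conceptual proof of (2), since localization at a proper ideal replaces the paper's $\varepsilon$-argument and its passage through meets of ideals in $B$; what the paper's route buys is that (2) stands on its own, independent of (3). One small point to patch: your final inference ``$rx_{\lnot K}\le s_Ix_{\lnot K}$ and $x_{\lnot K}\ne 0$ give $r\le s_I$'' cannot cite Remark~\ref{rem: idempotent facts}(\ref{idempotent facts: 1}) verbatim when $r\le 0$ or $s_I=0$, since that remark assumes both scalars are strictly positive; but $(s_I-r)x_{\lnot K}\ge 0$ forces $s_I\ge r$ in all cases (otherwise $(s_I-r)x_{\lnot K}$ would be both nonnegative and nonpositive, hence zero, giving $x_{\lnot K}=0$), so the conclusion stands.
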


\begin{proof}
(\ref{not A: 1}) If $(a-r)^- \in I$, then $a + I \ge r + I$ in $A/I$ by Remark~\ref{rem: positive part}(\ref{positive part: 2}). We use this to show that $(a - s_I)^- \in I$. For each $n \ge 1$ there is $r$ with $(a-r)^- \in I$ and $s_I - 1/n \le r$. Therefore, $(s_I - 1/n) + I \le r + I \le a + I$, and so $(s_I - a) + I \le 1/n + I$. Since this is true for all $n$, we have $(s_I-a) + I \le 0 + I$ as $A/I$ is archimedean. Thus, $s_I + I\le a + I$. Applying Remark~\ref{rem: positive part}(\ref{positive part: 2}) again yields $(a-s_I)^- \in I$. 

(\ref{not A: 2}) If $(a-r)^- \in I$, then $rx_{\lnot I} \le \alpha(a)$ by Lemma~\ref{lem: preservation of adding a scalar}(\ref{preservation of adding a scalar: 1}). Conversely, suppose that $rx_{\lnot I} \le \alpha(a)$. The result is clear if $r \le 0$ since then $(a-r)^- = 0 \in I$, so assume $r > 0$. By (\ref{not A: 1}) and Lemma~\ref{lem: preservation of adding a scalar}(\ref{preservation of adding a scalar: 1}), we may write $\alpha(a) = \bigvee \{ s_J x_{\lnot J} \mid J \in \arch(A)\}$, where $s_J$ is given in (\ref{not A: 1}). To show $(a-r)^- \in I$ we then need to show $r \le s_I$. We have $rx_{\lnot I} \le \bigvee \{ s_J x_{\lnot J} \mid J \in \arch(A) \}$ and $rx_{\lnot I} \le r$. Therefore,
\begin{align*}
rx_{\lnot I} &\le \bigvee \{ s_Jx_{\lnot J} \mid J \in \arch(A)\} \wedge r \\
&= \bigvee \{ s_Jx_{\lnot J} \wedge r \mid J \in \arch(A)\} \\
&= \bigvee \{ \min(s_J, r)x_{\lnot J} \mid J \in \arch(A) \}
\end{align*}
by Remark~\ref{rem: l-ring properties}(\ref{l-ring: vee wedge a}) and Lemma~\ref{rem: re meet sf}(\ref{re meet sf: 1}). To simplify notation set $t_J = \min(s_J, r)$. Then $(a - t_J)^- \le (a - s_J)^-$, so $(a - t_J)^- \in J$. From this and Remark~\ref{rem: Specker facts}(\ref{Specker facts: 3}) we get
\[
r = rx_I \vee rx_{\lnot I} \le rx_I \vee \bigvee \{ t_Jx_{\lnot J} \mid J \in \arch(A)\} \le r
\]
since the join is bounded by $r$, so equality holds throughout. 
Fix $\varepsilon > 0$. Then
\begin{align*}
r &= \left( rx_I \vee \bigvee \{ t_Jx_{\lnot J} \mid J \in \arch(A)\}\right) \vee (r - \varepsilon) \\
&= rx_I \vee \bigvee \{ t_Jx_{\lnot J} \vee (r - \varepsilon) \mid J \in \arch(A)\}.
\end{align*}
If $t_J \le r - \varepsilon$, then $t_J x_{\lnot J} \le r - \varepsilon$. Therefore, 
\begin{align*}
r &= rx_I \vee \bigvee \{ t_Jx_{\lnot J} \vee (r - \varepsilon) \mid J \in \arch(A)\} \\
&= \left( rx_I \vee \bigvee \{t_Jx_{\lnot J} \mid r - \varepsilon < t_J \}\right) \vee (r-\varepsilon).
\end{align*}
Thus, $rx_I \vee \bigvee \{ t_Jx_{\lnot J} \mid r - \varepsilon < t_J \} = r$ by Lemma~\ref{rem: re meet sf}(\ref{re meet sf: 3}). Multiplying both sides by $r^{-1}$ yields $x_I \vee \bigvee \{ r^{-1}t_Jx_{\lnot J} \mid r - \varepsilon < t_J \} = 1$. Consequently, $x_{\lnot I} \le \bigvee \{ r^{-1}t_Jx_{\lnot J} \mid r - \varepsilon < t_J\}$ by Remark~\ref{rem: idempotent facts}(\ref{idempotent facts: 2}), and hence $rx_{\lnot I} \le \{ t_Jx_{\lnot J} \mid r - \varepsilon < t_J\}$. Let $S = \{ J \in \arch(A) \mid r - \varepsilon < t_J \}$. We then have $rx_{\lnot I} \le \bigvee \{ t_J x_{\lnot J} \mid J \in S\}$ and $(a - (r-\varepsilon))^- \in J$ for each $J \in S$ since $(a - (r - \varepsilon))^- \le (a - t_J)^-$ and $(a - t_J)^- \in J$. Because $t_J \le r$ for each $J$ by definition and $r > 0$, we have $rx_{\lnot I} \le \bigvee \{ rx_{\lnot J} \mid J \in S\}$, so $x_{\lnot I} \le \bigvee \{ x_{\lnot J} \mid J \in S\}$. Therefore, $\lnot I \le \bigvee \{ \lnot J \mid J \in S\}$ in $B$. Since $B$ is a boolean algebra, $\bigwedge_B S \le I$. Because $\arch(A)$ is a sublattice of $B$, we have $\bigwedge_{\arch(A)} S \le \bigwedge_B S$. But $\bigwedge_{\arch(A)} S = \bigcap S$, so $\bigcap S \subseteq I$. As $(a-(r-\varepsilon))^- \in J$ for each $J \in S$, we see that $(a-(r-\varepsilon))^- \in I$. Since this is true for all $\varepsilon$, we have $a + I \ge (r - \varepsilon) + I$ for each $\varepsilon$,
so $a + I \ge r + I$ because $I$ is archimedean. Thus, $(a-r)^- \in I$.

(\ref{not A: 3}) We write $s = s_I$ for convenience. The first part of the statement follows from (\ref{not A: 1}) and (\ref{not A: 2}). Suppose that $I \vee \ar{(a-s)^+} = A$. Then $I + \el{(a-s)^+} = A$ by Lemma~\ref{lem: arch}. By Lemma~\ref{lem: arch}(\ref{arch: 1}) and Remark~\ref{rem: [S]}, there are $0 \le x,y$ with $x \in I$, $y \le n(a-s)^+$ for some $n$, and $x + y =1$. Then $1/n - y/n = x/n \in I$ and $1/n - y/n \ge 1/n - (a-s)^+$. Therefore, $1/n - y/n \ge (1/n - (a-s)^+) \vee 0$, so $(1/n - (a-s)^+)^+ \in I$. Using items (\ref{l-ring: negation and join}), (\ref{l-ring: vee + a}), (\ref{l-ring: vee wedge a}), and (\ref{l-ring: -a plus}) of Remark~\ref{rem: l-ring properties}, we have
\begin{align*}
\left( 1/n - (a-s)^+\right)^+ &= \left( 1/n - ((a-s)\vee 0)\right)^+ = \left( 1/n + ((s-a) \wedge 0)\right)^+ \\
&= \left((s + 1/n - a) \wedge 1/n\right) \vee 0 \\
&= \left((s + 1/n - a) \vee 0\right) \wedge 1/n \\
&= (a - (s + 1/n))^- \wedge 1/n.
\end{align*}
Thus, $(a - (s + 1/n))^- \wedge 1/n \in I$. Let $m \ge 1$ be such that $(a - (s + 1/n))^- \le m$. Applying Remark~\ref{rem: l-ring properties}(\ref{l-ring: join times scalar}) yields
\begin{align*}
(a - (s + 1/n))^- &\le mn(a - (s + 1/n))^- \wedge m  \\
&= mn[(a - (s + 1/n))^- \wedge 1/n] \in I.
\end{align*}
Therefore, $(a - (s + 1/n))^- \in I$, so $(s + 1/n)x_{\lnot I} \le \alpha(a)$ by Lemma~\ref{lem: preservation of adding a scalar}(\ref{properties of arch: 1}). This is a contradiction to the definition of $s = s_I$. Thus, $I \vee \ar{(a-s_I)^+} \ne A$.
\end{proof}

\def\cprime{$'$}
\providecommand{\bysame}{\leavevmode\hbox to3em{\hrulefill}\thinspace}
\providecommand{\MR}{\relax\ifhmode\unskip\space\fi MR }
% \MRhref is called by the amsart/book/proc definition of \MR.
\providecommand{\MRhref}[2]{%
  \href{http://www.ams.org/mathscinet-getitem?mr=#1}{#2}
}
\providecommand{\href}[2]{#2}


\begin{thebibliography}{10}

\bibitem{BD74}
R.~Balbes and P.~Dwinger, \emph{Distributive lattices}, University of Missouri
  Press, Columbia, Mo., 1974.

\bibitem{Ban97}
B.~Banaschewski, \emph{The real numbers in pointfree topology}, Textos de
  Matem\'{a}tica. S\'{e}rie B [Texts in Mathematics. Series B], vol.~12,
  Universidade de Coimbra, Departamento de Matem\'{a}tica, Coimbra, 1997.

\bibitem{Ban05a}
\bysame, \emph{On the function ring functor in pointfree topology}, Appl.
  Categ. Structures \textbf{13} (2005), no.~4, 305--328.

\bibitem{BM80}
B.~Banaschewski and C.~J. Mulvey, \emph{Stone-\v {C}ech compactification of
  locales. {I}}, Houston J. Math. \textbf{6} (1980), no.~3, 301--312.

\bibitem{Ber72}
G.~M. Bergman, \emph{Boolean rings of projection maps}, J. London Math. Soc.
  \textbf{4} (1972), 593--598.

\bibitem{BMMO15b}
G.~Bezhanishvili, V.~Marra, P.~J. Morandi, and B.~Olberding, \emph{De {V}ries
  powers: a generalization of {B}oolean powers for compact {H}ausdorff spaces},
  J. Pure Appl. Algebra \textbf{219} (2015), no.~9, 3958--3991.

\bibitem{BMMO15a}
\bysame, \emph{Idempotent generated algebras and {B}oolean powers of
  commutative rings}, Algebra Universalis \textbf{73} (2015), 183--204.

\bibitem{BMO13a}
G.~Bezhanishvili, P.~J. Morandi, and B.~Olberding, \emph{Bounded {A}rchimedean
  {$\ell$}-algebras and {G}elfand-{N}eumark-{S}tone duality}, Theory Appl.
  Categ. \textbf{28} (2013), Paper No. 16, 435--475.

\bibitem{BMO13b}
\bysame, \emph{Dedekind completions of bounded {A}rchimedean
  {$\ell$}-algebras}, J. Algebra Appl. \textbf{12} (2013), no.~1, 16 pp.

\bibitem{BMO16}
\bysame, \emph{A functional approach to {D}edekind completions and the
  representation of vector lattices and {$\ell$}-algebras by normal functions},
  Theory Appl. Categ. \textbf{31} (2016), Paper No. 37, 1095--1133.

\bibitem{BMO18c}
\bysame, \emph{Canonical extensions of bounded archimedean vector lattices},
  Algebra Universalis \textbf{79} (2018), no.~1, Paper No. 12, 17 pp.

\bibitem{BMO20e}
\bysame, \emph{Specker algebras: A survey}, Algebraic Techniques and Their Use
  in Describing and Processing Uncertainty. To the Memory of Professor Elbert
  A. Walker. Hung Nguyen and Vladik Kreinovich, editors, Springer, 2020,
  pp.~1--19.

\bibitem{BH20}
N.~Bezhanishvili and W.~Holliday, \emph{Choice-free {S}tone duality}, J.
  Symbolic Logic \textbf{85} (2020), no.~1, 109--148.

\bibitem{Bir79}
G.~Birkhoff, \emph{Lattice theory}, third ed., American Mathematical Society
  Colloquium Publications, vol.~25, American Mathematical Society, Providence,
  R.I., 1979.

\bibitem{Bou89}
N.~Bourbaki, \emph{General topology. {C}hapters 1--4}, Elements of Mathematics
  (Berlin), Springer-Verlag, Berlin, 1998, Translated from the French, Reprint
  of the 1989 English translation.

\bibitem{Dan15}
N.~D{\u{a}}ne{\c{t}}, \emph{Riesz spaces of normal semicontinuous functions},
  Mediterr. J. Math. \textbf{12} (2015), no.~4, 1345--1355.

\bibitem{Dil50}
R.~P. Dilworth, \emph{The normal completion of the lattice of continuous
  functions}, Trans. Amer. Math. Soc. \textbf{68} (1950), 427--438.

\bibitem{GH01}
M.~Gehrke and J.~Harding, \emph{Bounded lattice expansions}, J. Algebra
  \textbf{238} (2001), no.~1, 345--371.

\bibitem{GJP13}
M.~Gehrke, R.~Jansana, and A.~Palmigiano, \emph{{$\Delta_1$}-completions of a
  poset}, Order \textbf{30} (2013), no.~1, 39--64.

\bibitem{GJ94}
M.~Gehrke and B.~J\'onsson, \emph{Bounded distributive lattices with
  operators}, Math. Japon. \textbf{40} (1994), no.~2, 207--215.

\bibitem{GJ04}
\bysame, \emph{Bounded distributive lattice expansions}, Math. Scand.
  \textbf{94} (2004), no.~1, 13--45.

\bibitem{GP08}
M.~Gehrke and H.~A. Priestley, \emph{Canonical extensions and completions of
  posets and lattices}, Rep. Math. Logic \textbf{43} (2008), 133--152.

\bibitem{GN43}
I.~Gelfand and M.~Neumark, \emph{On the imbedding of normed rings into the ring
  of operators in {H}ilbert space}, Rec. Math. [Mat. Sbornik] N.S.
  \textbf{12(54)} (1943), 197--213.

\bibitem{GKP09}
J.~Guti\'{e}rrez~Garc\'{\i}a, T.~Kubiak, and J.~Picado, \emph{Localic real
  functions: a general setting}, J. Pure Appl. Algebra \textbf{213} (2009),
  no.~6, 1064--1074.

\bibitem{GMP16}
J.~Guti\'{e}rrez~Garc\'{\i}a, I.~Mozo~Carollo, and J.~Picado, \emph{Normal
  semicontinuity and the {D}edekind completion of pointfree function rings},
  Algebra Universalis \textbf{75} (2016), no.~3, 301--330.

\bibitem{GP11}
J.~Guti\'{e}rrez~Garc\'{\i}a and J.~Picado, \emph{Rings of real functions in
  pointfree topology}, Topology Appl. \textbf{158} (2011), no.~17, 2264--2278.

\bibitem{GP14}
J.~Guti{\'e}rrez~Garc{\'{\i}}a and J.~Picado, \emph{On the parallel between
  normality and extremal disconnectedness}, J. Pure Appl. Algebra \textbf{218}
  (2014), no.~5, 784--803.

\bibitem{HJ61}
M.~Henriksen and D.~G. Johnson, \emph{On the structure of a class of
  {A}rchimedean lattice-ordered algebras}, Fund. Math. \textbf{50} (1961/1962),
  73--94.

\bibitem{Isb72}
J.~Isbell, \emph{Atomless parts of spaces}, Math. Scand. \textbf{31} (1972),
  5--32.

\bibitem{Joh65}
D.~G. Johnson, \emph{The completion of an archimedean {$f$}-ring}, J. London
  Math. Soc. \textbf{40} (1965), 493--496.

\bibitem{Joh82}
P.~T. Johnstone, \emph{Stone spaces}, Cambridge Studies in Advanced
  Mathematics, vol.~3, Cambridge University Press, Cambridge, 1982.

\bibitem{JT51}
B.~J{\'o}nsson and A.~Tarski, \emph{Boolean algebras with operators. {I}},
  Amer. J. Math. \textbf{73} (1951), 891--939.

\bibitem{LZ71}
W.~A.~J. Luxemburg and A.~C. Zaanen, \emph{Riesz spaces. {V}ol. {I}},
  North-Holland Mathematical Library, North-Holland Publishing Co., Amsterdam,
  1971.

\bibitem{Nak50}
H.~Nakano, \emph{Modern {S}pectral {T}heory}, Maruzen Co. Ltd., Tokyo, 1950.

\bibitem{PP12}
J.~Picado and A.~Pultr, \emph{Frames and locales: Topology without points},
  Frontiers in Mathematics, Birkh\"auser/Springer Basel AG, Basel, 2012.

\bibitem{Rot73}
G.-C. Rota, \emph{The valuation ring of a distributive lattice}, Proceedings of
  the {U}niversity of {H}ouston {L}attice {T}heory {C}onference ({H}ouston,
  {T}ex., 1973), Dept. Math., Univ. Houston, Houston, Tex., 1973, pp.~574--628.

\bibitem{Sto40}
M.~H. Stone, \emph{A general theory of spectra. {I}}, Proc. Nat. Acad. Sci.
  U.S.A. \textbf{26} (1940), 280--283.

\end{thebibliography}
\end{document}